\theoremstyle{definition}
\newtheorem*{thm*}{Theorem}
\newtheorem{thm}{Theorem}[section]
\newtheorem{cor}[thm]{Corollary}
\newtheorem{lem}[thm]{Lemma}
\newtheorem{defn}[thm]{Definition}
\newtheorem{rmk}[thm]{Remark}
\newtheorem{prop}[thm]{Proposition}
\numberwithin{equation}{section}
\title{Stability of regular shrinkers in the network flow}
\author{Jui-En Chang}
\thanks{The author is supported by MOST-107-2115-M-002-015-MY3}
\address{National Taiwan University, Department of Mathematics\\
No.1, Sec. 4, Roosevelt Rd., Da'an Dist., Taipei City, Taiwan, 10617}
\email{jechang@ntu.edu.tw}
\date{\today}
\begin{document}
\maketitle
\begin{abstract} 
The singularities of network flow are modeled by self-similarly shrinking solutions called regular shrinkers. In this paper, we study the stability of regular shrinkers. We show that all regular shrinkers with two or more enclosed regions can be perturbed away. Among the regular shrinkers with one enclosed region, 4-ray star, 5-ray star, fish, and rocket are unstable.
\end{abstract}

\section{Introduction}
The network flow is a geometric flow that studies the flow of a network, an essentially singular set, in $\mathbb{R}^2$. This flow is first proposed by Mullins \cite{Mu}.
It has several applications. In material science, it models the behavior of grain boundary of a multicrystalline material. It is also the first attempt to study a flow on an essentially singular geometric object. The network flow has several different behaviors which are not shown in the smooth counterpart, the curve shortening flow.

To make the flow problem well-posed, we impose the Herring condition: All multi-junctions are triple-junctions with angles between the curves being $\frac{2\pi}{3}$. For the most simple case that a network with only one triple junction, Bronsard and Reitich in \cite{BR} establish the short time existence and uniqueness. After their contribution, more complicated cases are considered. Mantegazza, Novaga, and Pluda in \cite{MNP2} establish existence and uniqueness for general networks. About more study of the network flow, the reader can refer to \cite{INS,MMN,MNP,MNP2,MNPS,MNT}, especially, \cite{MNP2} and \cite{MNPS}. 

At the maximal time of existence, the singularity may occur. Using parabolic scaling, the tangent flow at the singularity is a self-similarly shrinking solution. If we translate the space and time variables such that the singularity happens at the origin when time $t=0$. The time $t=-1$ slice is a regular network that satisfies
\begin{equation}
    k+\frac{\langle x,N\rangle}{2}=0,
\end{equation}
where $N$ is unit normal, $x$ is the position and $k$ is the curvature with respect to $N$. 

By finding regular networks satisfying the equation, we can limit the possibility of singularities. We call such a network a regular shrinker. They describe the possible shape of the singularities. If there are no triple junctions, it reduces to the case in the curve shortening flow. Abresch and Langer \cite{AL} classify all immersed solutions and show that the only embedded self-similarly shrinking curves are a line through the origin or a circle centered at the origin. In the presence of triple junctions, there are two solutions with exactly one triple junction. One of them is the standard triod. The other solution is the Brakke spoon which is first described in the work of Brakke \cite{B}. The classification of regular shrinker with one enclosed region is done by Chen and Guo \cite{CG}. Baldi, Haus, and Mantegazza \cite{BHM, BHM2} exclude the $\Theta$-shaped network. Following their work, the author and Lue \cite{CL} show that there is only one regular shrinker with exactly two enclosed regions. The appendix of \cite{MNPS} contains a collection of all known regular shrinkers and some possible numerical results. For more complicated cases, even though it is conjectured there are only finitely many regular shrinkers, a complete classification is still hard to obtain.

The regular shrinkers with no more than one enclosed region play important roles in this article. Their pictures and names are shown below.
\begin{figure}[H]
    \centering
    \includegraphics[width=3cm]{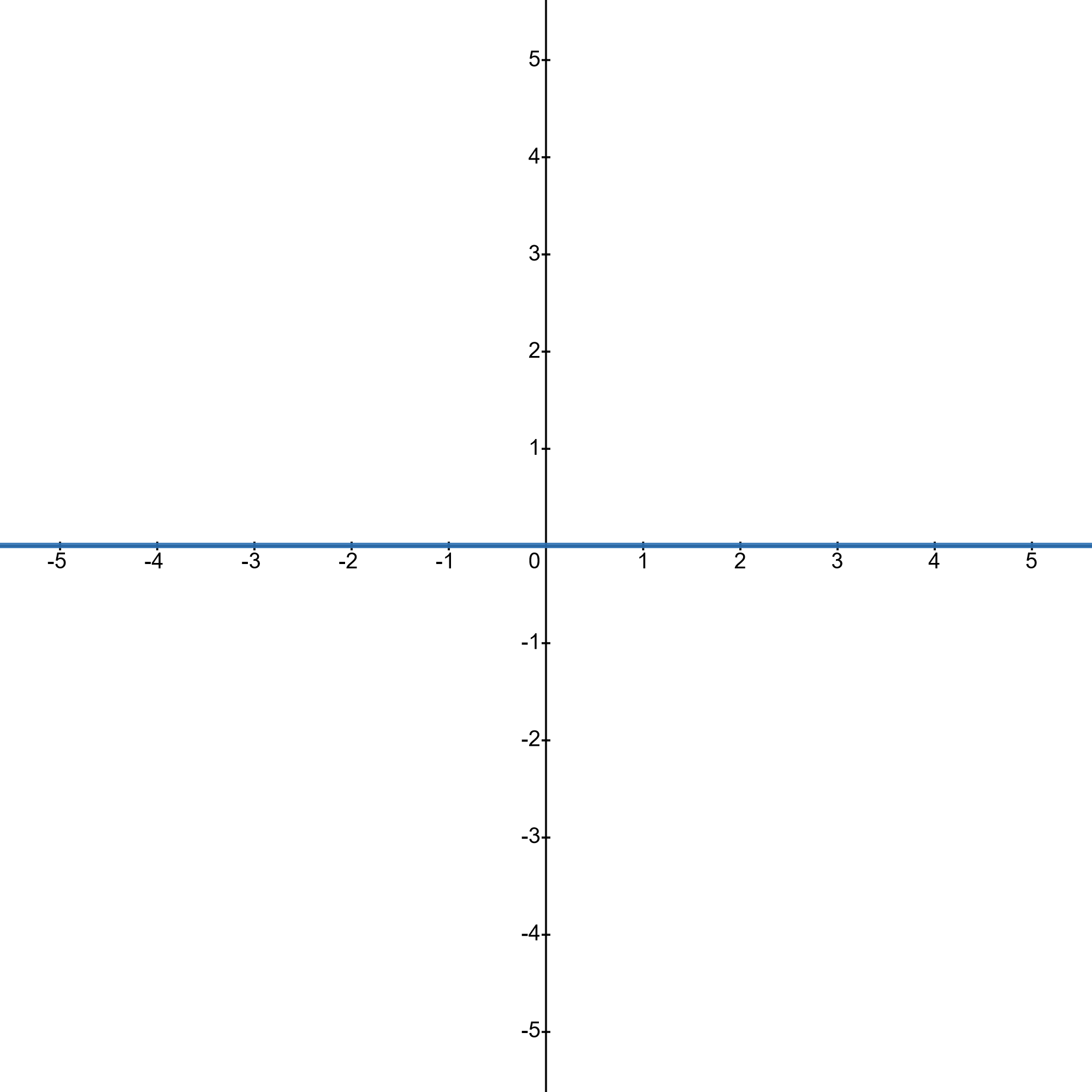}
    \includegraphics[width=3cm]{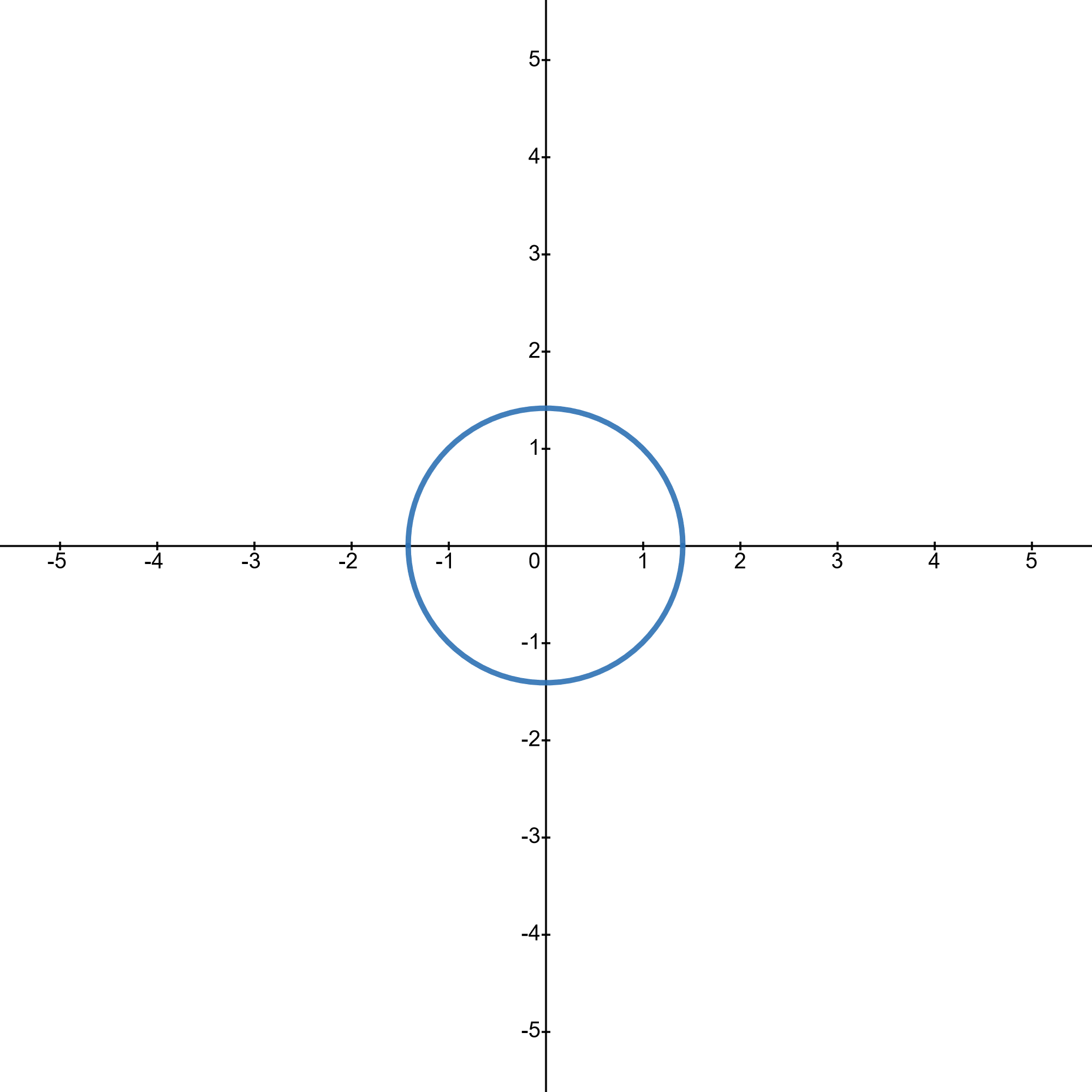}
    \includegraphics[width=3cm]{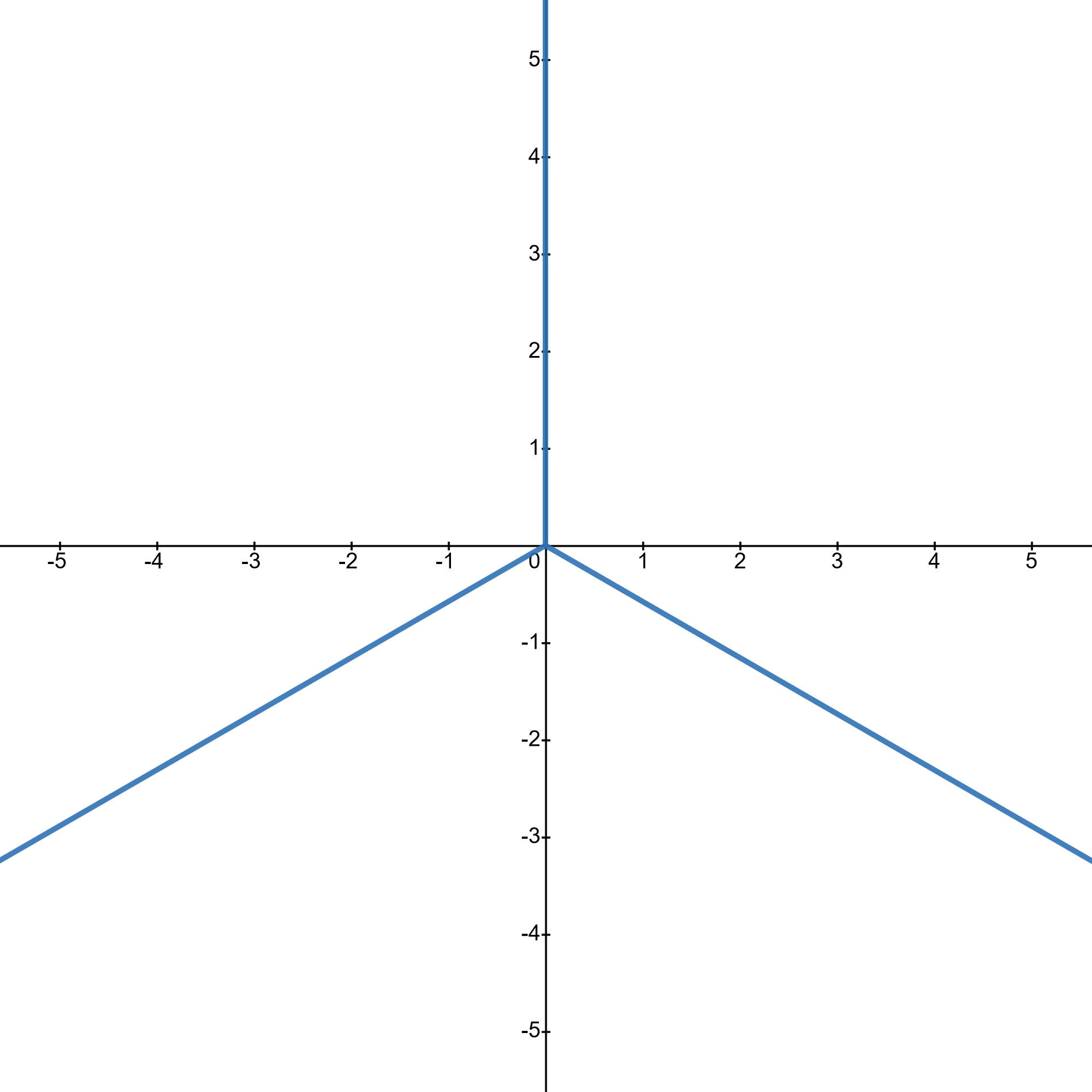}
    \includegraphics[width=3cm]{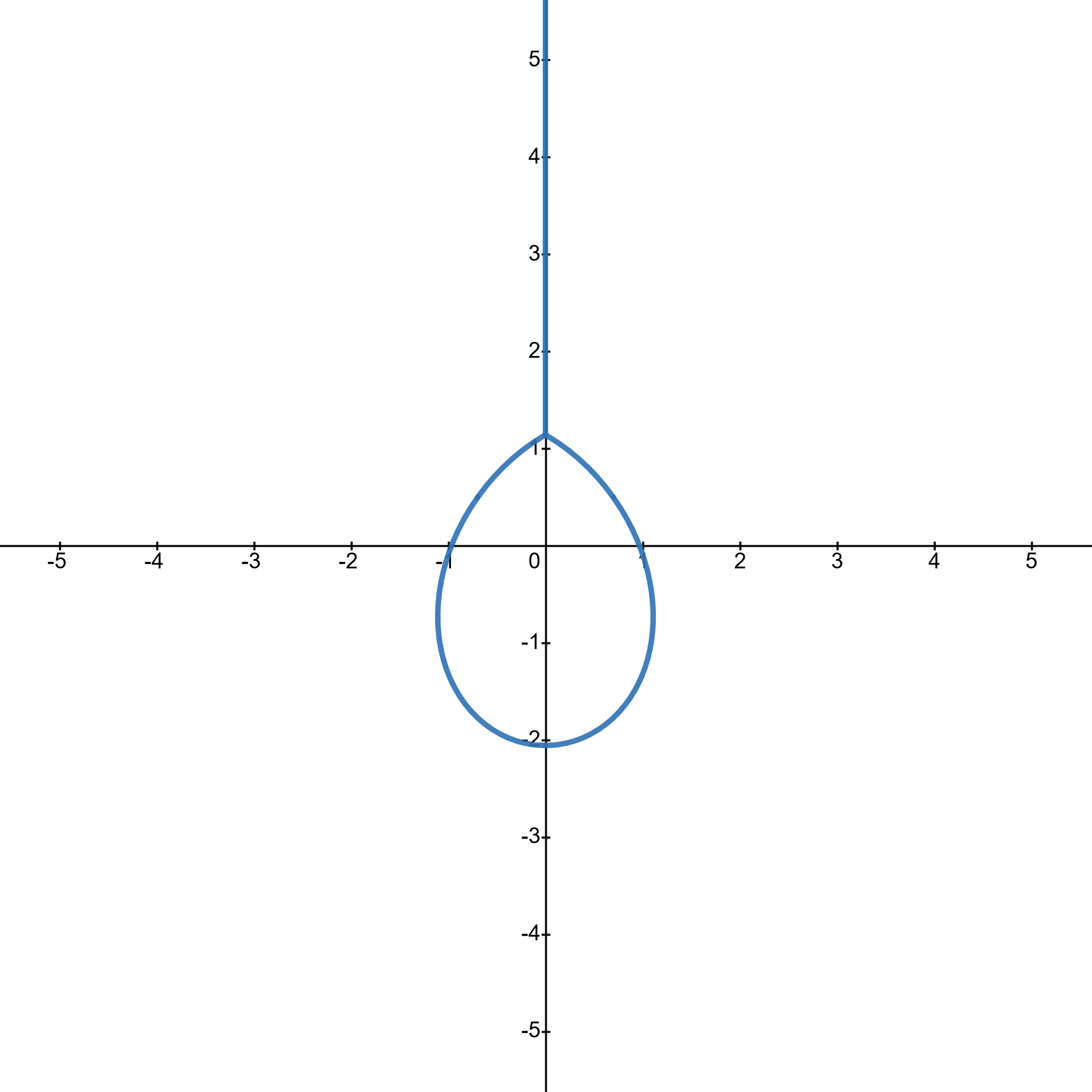}    \includegraphics[width=3cm]{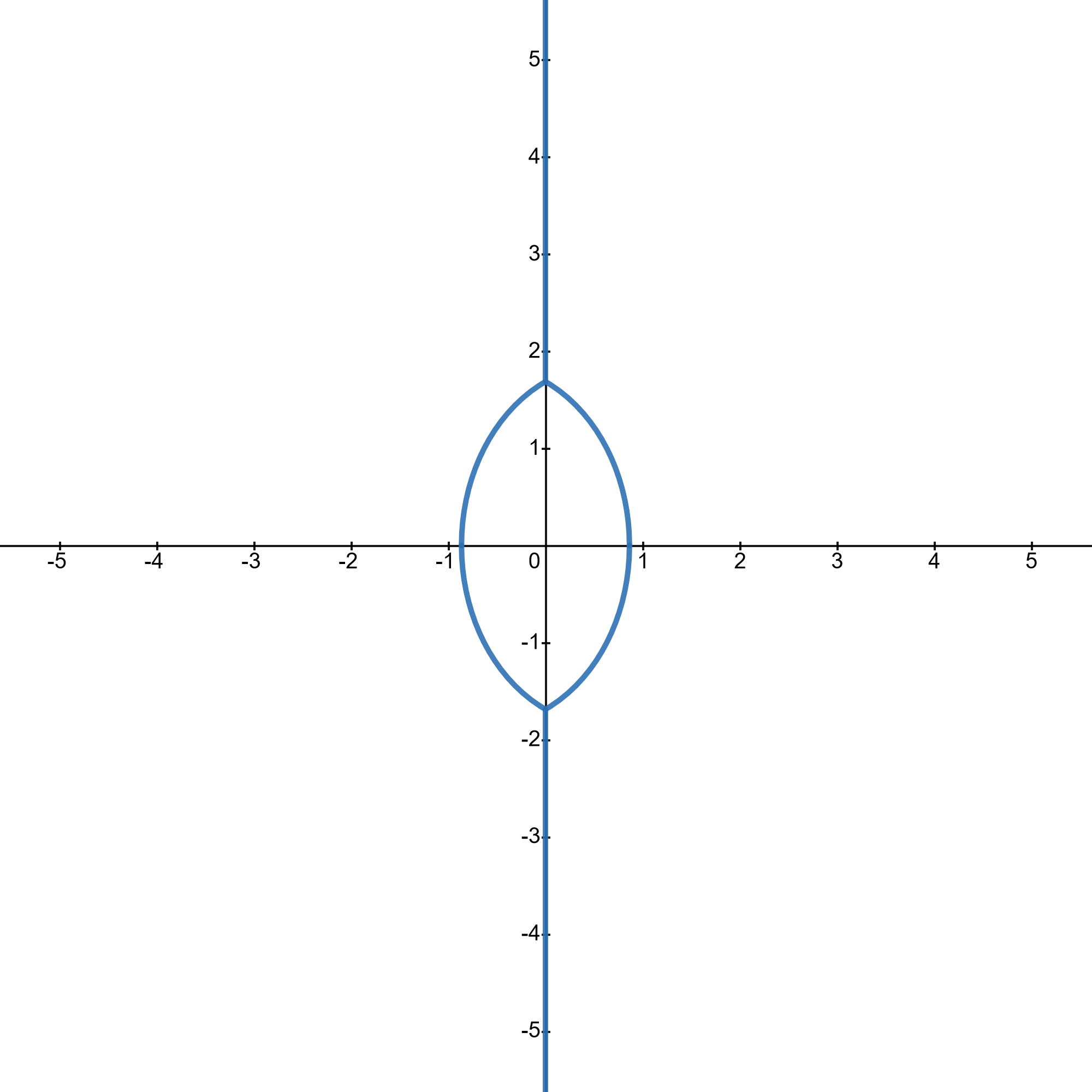}
    \caption{Line, circle, standard triod, Brakke spoon, lens}
\end{figure}
\begin{figure}[H]
    \centering
    \includegraphics[width=3cm]{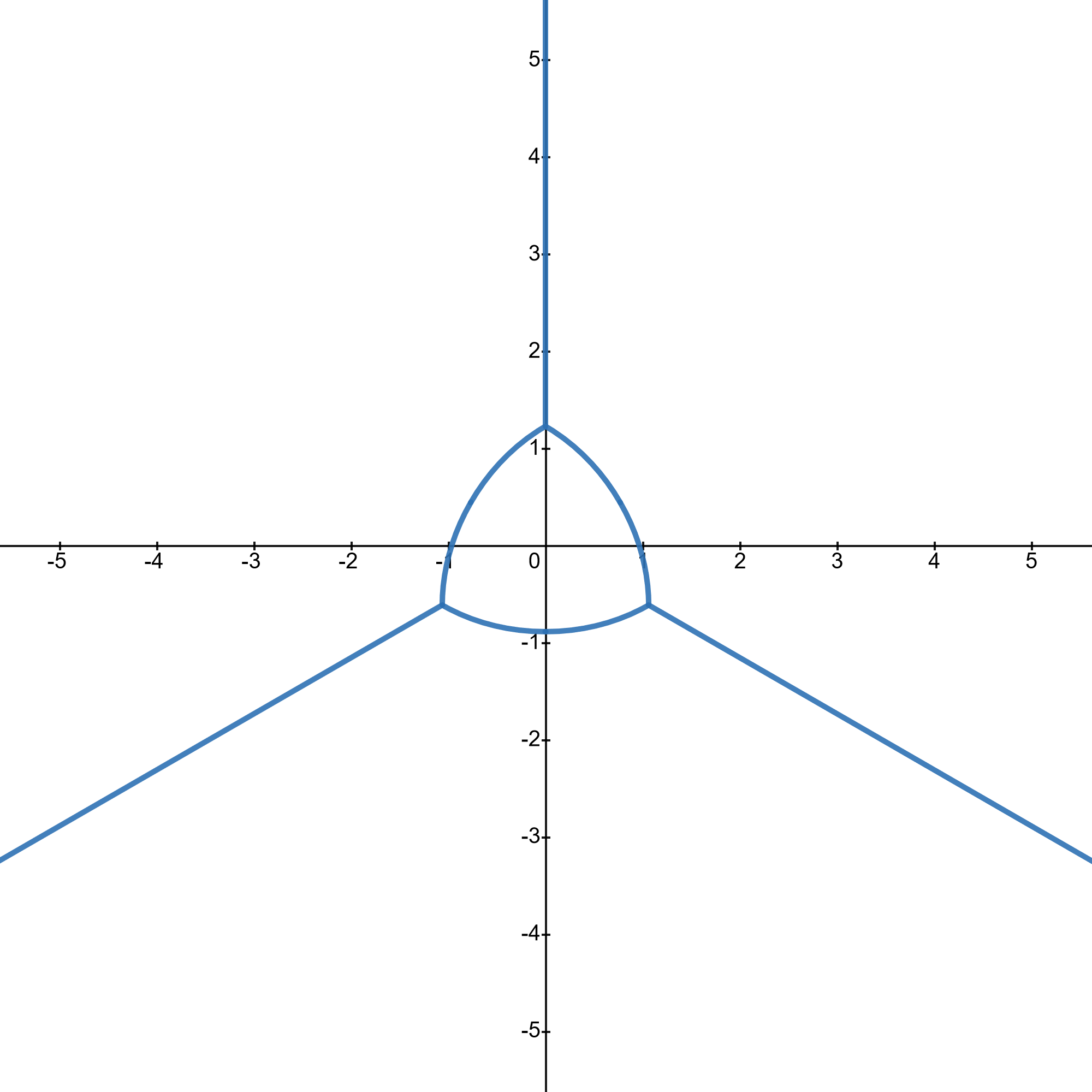}   
    \includegraphics[width=3cm]{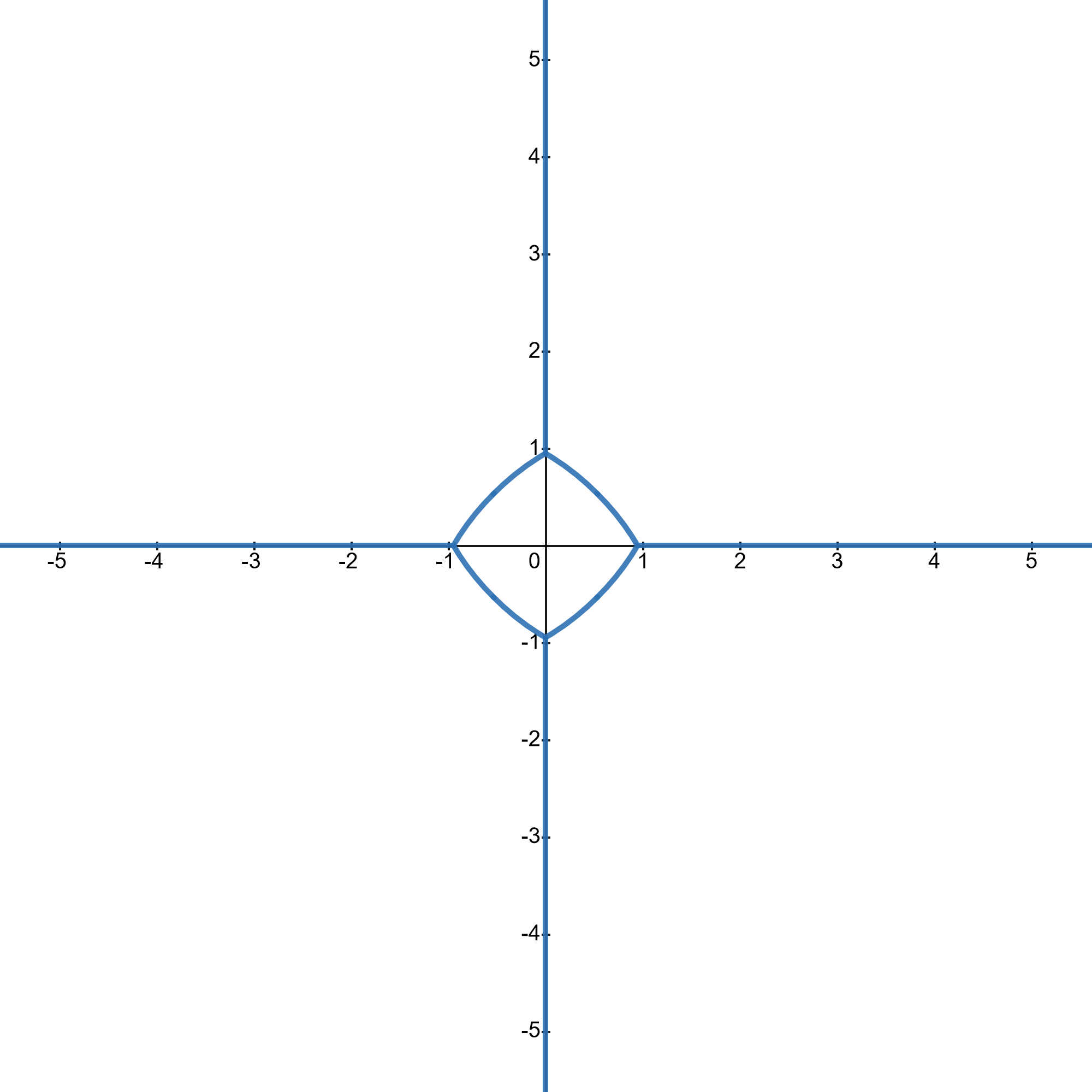}
    \includegraphics[width=3cm]{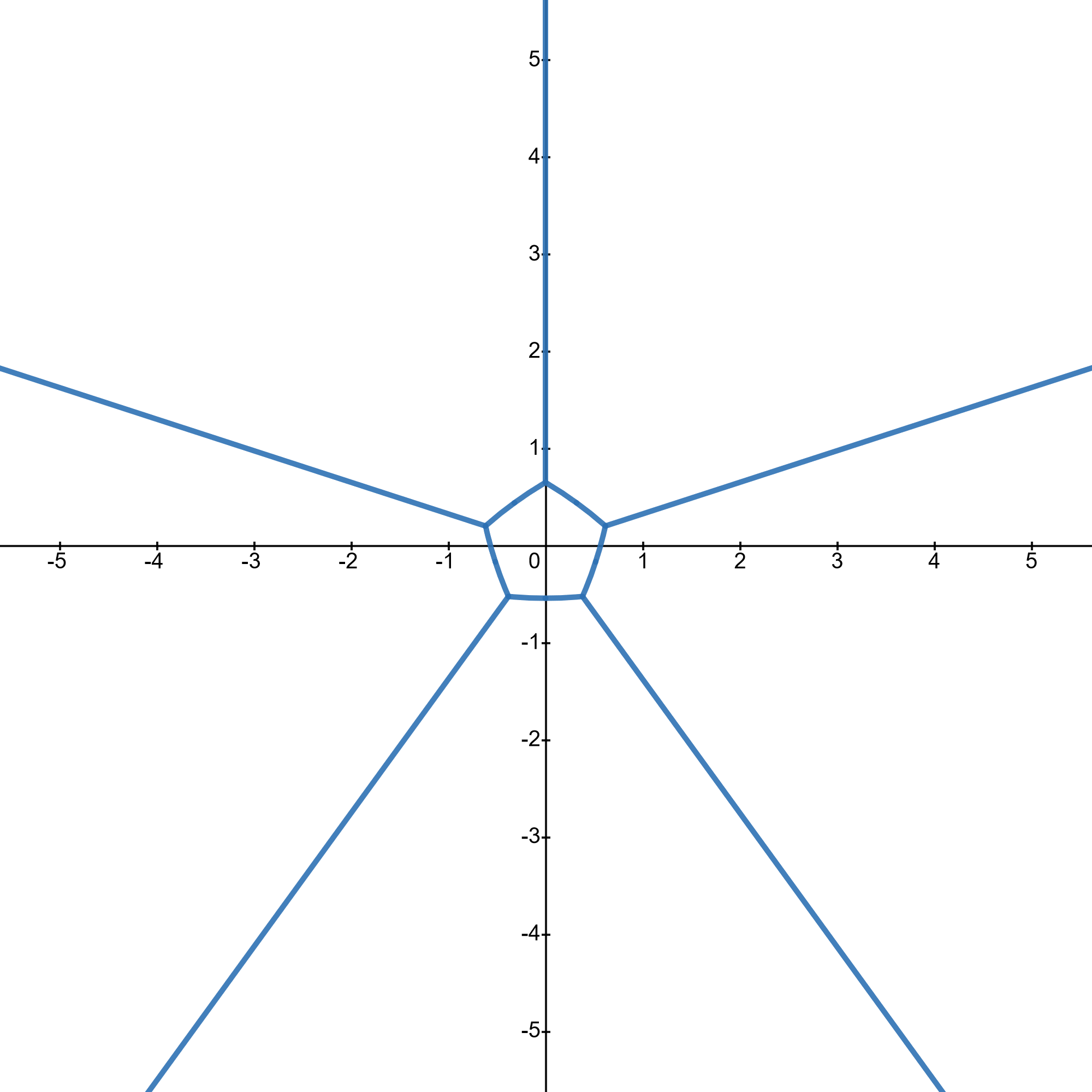}
    \includegraphics[width=3cm]{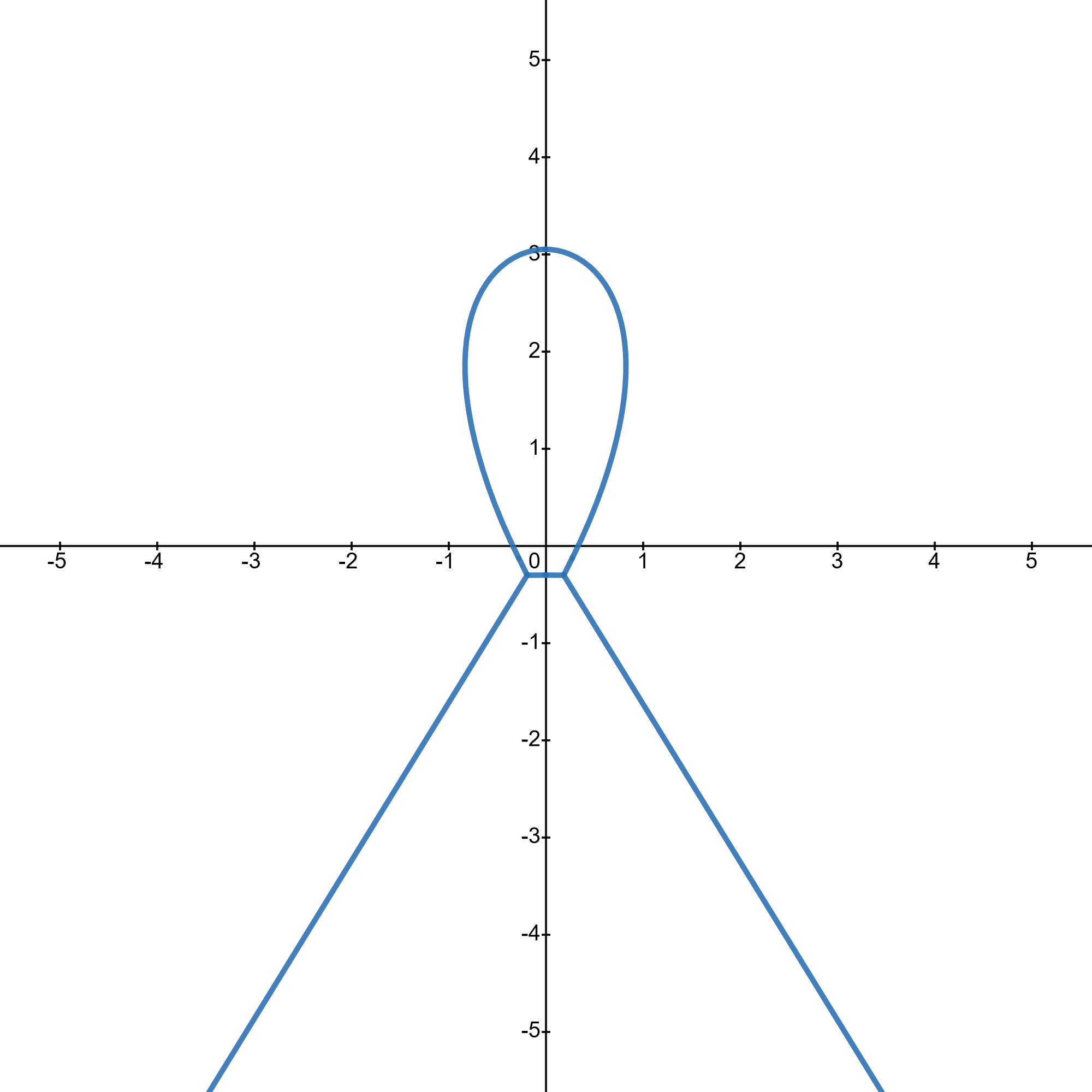}
    \includegraphics[width=3cm]{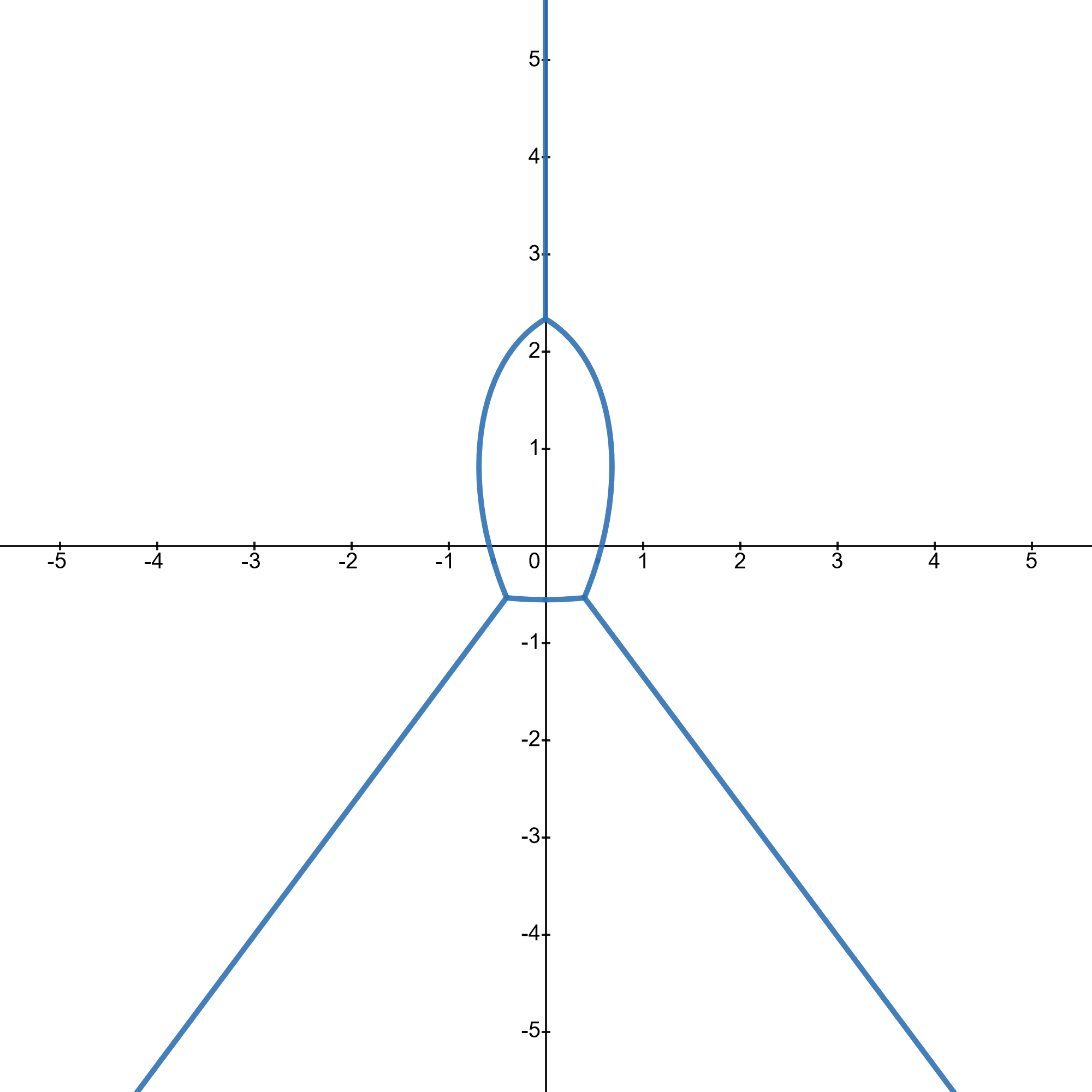}
    \caption{3-ray star, 4-ray star, 5-ray star, fish, rocket}
\end{figure}

Even though there are lots of regular shrinkers, not all of them are likely to appear as the tangent flow of a singularity. Near a singularity, if we perturb the network, will the singularity of the new network has the same tangent flow as the original network? In the study of network flow, there are some affirmative results that some regular shrinkers are stable. 

In \cite{EW}, Epstein and Weinstein use a perturbation in the space of curvature to show that only the circle with multiplicity 1 is the only stable compact self-shrinker in $\mathbb{R}^2$. The Brakke spoon is shown to be the blow-up limit for all spoon-shaped networks in the work of Pluda \cite{P}. This implies stability since any perturbation of the Brakke spoon is topologically spoon-shaped. The lens is shown to be the rescaling limit of any flow starting from a symmetric lens-shaped network in \cite{AGH} and the work of Bellettini and Novaga \cite{BN}. Since they require the network to be symmetric, the problem of general networks which are topologically equivalent to the lens is still open.

In this paper, we will show that unlike the above regular shrinkers, some regular shrinkers are unstable. We can perturb them in a way such that they can not be the tangent flow of the perturbed network. To establish this, let $x_0\in\mathbb{R}^2$ and $t_0>0$, the functional $F_{x_0,t_0}$ is defined to be
\begin{equation}
    F_{x_0,t_0}(\Gamma)=\frac{1}{\sqrt{4\pi t_0}}\int_\Gamma e^{-\frac{|x-x_0|^2}{4t_0}}d\sigma.
\end{equation}
The $F$-functional is important in the study of self-similarly shrinking solutions in mean curvature flow. It also works for network flow. A network $\Gamma$ is a critical point of $F_{x_0,t_0}$ if and only if it is the $t=-t_0$ slice of a self-similarly shrinking network with respect to the point $x=x_0$. We define the entropy $\lambda$ of a network $\Gamma$.
\begin{equation}
    \lambda(\Gamma)=\sup_{x_0,t_0}F_{x_0,t_0}(\Gamma).
\end{equation}
The entropy has the property that the critical points of $\lambda$ are regular shrinkers for the network flow and if $\Gamma_t$ flows under network flow, $\lambda(\Gamma_t)$ is nonincreasing. Therefore, if we can perturb a regular shrinker so that the entropy $\lambda$ decreases, at the singular time, the network flow has even lower entropy and the original network cannot be the tangent flow of the singularity.

To proceed, we need the idea of $F$-stability and entropy stability. The $F$-stability of a regular shrinker is defined as follows
\begin{defn}
A regular shrinker $\Gamma$ for $F_{x_0,t_0}$ is $F$-stable if for every compactly supported variation $\Gamma_s$, there exists variation $x_s$ of $x_0$ and $t_s$ of $t_0$ that makes $\frac{d^2}{ds^2}(F_{x_s,t_s}(\Gamma_s))\geq0$ at $s=0$.
\end{defn}
Also, we say that a regular shrinker is entropy-stable if it is a local minimum for the entropy functional $\lambda$.

The main theorem of this paper is the $F$-unstableness of certain regular shrinkers.
\begin{thm}
The 4-ray star, 5-ray star, fish, and rocket are $F$-unstable regular shrinkers.
\end{thm}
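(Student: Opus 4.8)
The plan is to adapt the Colding--Minicozzi theory of $F$-stability to the triple-junction setting. First I would compute the second variation of $F_{x_0,t_0}$ at a regular shrinker $\Gamma$. Writing a compactly supported variation as a normal field $fN$ on each edge, with $\rho=e^{-|x-x_0|^2/4t_0}$ and the drift Laplacian $\mathcal L f=f''-\tfrac12\langle x,T\rangle f'$, the second variation at fixed $(x_0,t_0)$ has the form
\[
\delta^2F(f)=-\int_\Gamma f\,Lf\,\rho\,d\sigma+(\text{junction terms}),\qquad L:=\mathcal L+k^2+\tfrac12 .
\]
At each $120^\circ$ junction the three outward unit normals satisfy $N_1+N_2+N_3=0$, so an admissible normal variation must obey the linearized constraint $f_1+f_2+f_3=0$; this is exactly what renders $\mathcal L$ self-adjoint and lets the boundary contributions be organized. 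I would then show, as in the smooth case, that varying the center $x_s$ and the scale $t_s$ adds to the effective normal variation precisely the translation fields $\langle y,N\rangle$ and a multiple of the dilation field $k$. Since $\Gamma$ maximizes $F$ in the base-point and scale directions, the pure Hessian in $(x_s,t_s)$ is negative definite, and completing the square shows that optimizing over $(x_s,t_s)$ amounts to projecting $f$ onto the orthogonal complement of $V:=\mathrm{span}\{k,\langle e_1,N\rangle,\langle e_2,N\rangle\}$. Thus $\Gamma$ is $F$-unstable if and only if there is an admissible compactly supported $f\perp_{L^2_\rho}V$ with $\int_\Gamma fLf\,\rho>0$.

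Two observations then drive the construction. First, $k$ and the $\langle e_i,N\rangle$ are eigenfunctions of $L$ with eigenvalues $1$ and $\tfrac12$, and each satisfies the junction constraint (indeed $\sum_iN_i=0$, and $\sum_ik_i=-\tfrac12\langle p,\sum_iN_i\rangle=0$ at a junction $p$), so $V$ lies in the admissible domain of $L$ and is $L$-invariant. Consequently, for admissible $f\perp V$,
\[
\int_\Gamma fLf\,\rho=-\int_\Gamma|f'|^2\rho+\int_\Gamma\bigl(k^2+\tfrac12\bigr)f^2\rho+(\text{junction terms}),
\]
with no contamination from $V$. Second, if $f$ is taken constant on each edge near every junction, the junction terms --- which involve $f_if_i'$ --- vanish; and if $f$ is constant everywhere except inside a cutoff of radius $R$ on the unbounded rays, then $\int|f'|^2\rho=O(e^{-R^2/4t_0})$ while $\int(k^2+\tfrac12)f^2\rho$ stays bounded below by a fixed positive constant coming from the bounded arcs. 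For such an $f$ one gets $\int_\Gamma fLf\,\rho>0$ once $R$ is large.

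It remains to produce a nonzero admissible, locally constant $f\perp V$, which is linear algebra. The locally constant admissible variations are one real constant per edge modulo the relation $\sum c=0$ at each junction, a space of dimension $(\#\text{edges})-(\#\text{junctions})$; imposing orthogonality to the three-dimensional $V$ leaves dimension $(\#\text{edges})-(\#\text{junctions})-3$. Since all junctions are trivalent, $3(\#\text{junctions})=2B+R$, where $R$ and $B$ count rays and bounded edges, so this dimension equals $\tfrac{2R+B}{3}-3$. Reading the data off Figure~2, it is $1$ for the 4-ray star ($R=B=4$) and $2$ for the 5-ray star ($R=B=5$), and the analogous count for the fish and rocket is again a positive integer, whereas for the 3-ray star, lens, triod, and Brakke spoon it is $\le 0$ --- which is precisely why those are not claimed unstable. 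Choosing any nonzero element of this space, cutting it off far out, and solving the (exponentially small, hence solvable for large $R$) perturbation of the three orthogonality conditions, one obtains $f\perp V$ with $\int fLf\,\rho>0$, i.e.\ $F$-instability.

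The main obstacle is the junction bookkeeping in the reduction: unlike the complete smooth case, one must check that the second variation genuinely collapses to $-\int fLf\,\rho$ plus boundary terms that vanish on locally constant fields, and that completing the square over $(x_s,t_s)$ is unaffected by the junctions, so that $k$ and the $\langle e_i,N\rangle$ remain the exact compensable directions. A secondary point arises only if the count $\tfrac{2R+B}{3}-3$ should turn out to be borderline for the fish or rocket: there one would instead place a non-constant test function on the bounded region, concentrated where $|k|$ is largest, use the reflection symmetry of the shrinker to force $f\perp V$, and verify that the zeroth-order term dominates the Dirichlet term --- the single step where quantitative curvature information about the specific shrinker is needed. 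Everything else is routine once the second-variation formula with junction terms is established.
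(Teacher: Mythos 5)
Your overall architecture matches the paper's for the star cases --- second variation with junction terms, the eigenfunctions $k$ and $\langle V,N\rangle$ of $L$ with eigenvalues $1$ and $\tfrac12$, locally constant test functions, the dimension count $\#\mathrm{edges}-\#\mathrm{junctions}-3$, and a final cutoff --- but there are two genuine gaps. First, the junction terms do \emph{not} vanish on locally constant variations. The boundary contribution at a junction $O_i$ is $-\sum_j f_i^j\bigl\langle\nabla\bigl(fe^{-|x|^2/4}\bigr)_i^j,\hat T_i^j\bigr\rangle$, and even when $f'\equiv 0$ on every edge this leaves the zeroth-order piece $\tfrac12\sum_j (f_i^j)^2\langle O_i,\hat T_i^j\rangle e^{-|O_i|^2/4}$ coming from differentiating the Gaussian weight; your claim that the junction terms involve only $f_if_i'$ drops it. This leftover term has no a priori sign, and absorbing it is exactly where the geometry enters: the paper shows the per-edge contribution of a constant $f$ is $\int_\gamma\bigl(-1+\tfrac14\langle x,T\rangle^2\bigr)f^2e^{-|x|^2/4}d\sigma$, which is negative for a bounded arc only because the arc lies in the unit disk, and for a ray starting at $r=a$ only because $a<1.063$ (verified from Chen--Guo's values $r_{\mathbf{in}}=0.9443$ and $0.6474$ for the 4- and 5-ray stars). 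In your version only $\int\bigl(k^2+\tfrac12\bigr)f^2\rho$ survives, which is sign-definite for purely topological reasons and never sees this quantitative input; the conclusion for the stars happens to be right, but the step that replaces ``junction terms vanish'' is the substance of the proof.

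Second, your dimension count fails for the fish and the rocket: with two rays and two (resp.\ three) bounded arcs, $(\#\mathrm{edges})-(\#\mathrm{junctions})-3$ is $-1$ (resp.\ $\le 0$), not a positive integer, so there is no nonzero locally constant admissible $f$ orthogonal to $V$. The paper's treatment of these two shrinkers is genuinely different: on the top curve $\gamma_1$ it takes $f=a_1k+a_2\langle N,e_2\rangle$ and constants on the remaining edges, obtaining a three-dimensional admissible space on which the form restricts to $Aa_1^2+2Ba_1a_2+Ca_2^2$ plus the (nonpositive) constant-edge contributions; negativity is then verified numerically ($A<0$, $B^2-AC<0$) using the energy $c$, the junction radius $r_{\mathbf{in}}$, and the opening angle $h_1$ from Chen--Guo, and orthogonality to $V$ is recovered from the reflection symmetry. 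Your fallback (``place a non-constant test function concentrated where $|k|$ is largest\dots'') points in this direction but supplies neither the test space nor the quantitative verification, and this is precisely the part of the theorem that does not reduce to soft arguments.
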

From this theorem, we can use the result in \cite{CM} to establish that the regular shrinkers in the theorem are entropy-unstable. Therefore, we can perturb them such that they can not be the tangent flow of the perturbed network.

The regular shrinkers with two or more enclosed regions are even more unstable. Via a different approach, we also show that there is a way to perturb them such that they cannot be the tangent flow of the singularity of the new network.
\begin{thm}\label{thm:2region}
For a regular shrinker with two or more enclosed regions, there is a perturbation such that the tangent flow of the perturbed network is not the same as the original regular shrinker.
\end{thm}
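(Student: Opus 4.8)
The plan is to exploit a rigidity in how the areas of the enclosed regions evolve under the network flow. Let $\Gamma$ be a regular shrinker with $n\ge 2$ enclosed regions $R_1,\dots,R_n$, and for a flow $\Gamma_t$ of fixed combinatorial type let $A_i(t)$ denote the area of $R_i$ and $m_i$ the number of triple junctions on its boundary. Applying the Gauss--Bonnet theorem to the curvilinear polygon $\partial R_i$, together with the Herring condition that each interior angle at a junction is $\tfrac{2\pi}{3}$ (so each exterior turning angle is $\tfrac{\pi}{3}$), gives $\int_{\partial R_i} k\,d\sigma = 2\pi - \tfrac{\pi}{3}m_i$. Since the normal speed of the flow is the curvature, the first variation of area yields
\begin{equation}
\frac{d}{dt}A_i(t) = -\int_{\partial R_i} k\, d\sigma = \frac{\pi}{3}\bigl(m_i-6\bigr).
\end{equation}
The crucial point is that this rate is a topological constant: as long as no singularity occurs, $m_i$ does not change, so each $A_i(t)$ is an affine function of $t$.

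From here I would extract the rigidity. If $\Gamma$ occurs as the tangent flow at a singularity of $\Gamma_t$ at a point $p$ and time $\tilde T$, then all $n$ regions must shrink to $p$ simultaneously as $t\to\tilde T$, so $A_i(\tilde T)=0$ for every $i$. Combined with the affine formula this forces
\begin{equation}
A_i(t) = \frac{\pi}{3}(6-m_i)(\tilde T - t) \qquad \text{for all } i,
\end{equation}
and evaluating at the initial time $t=0$ shows that $\dfrac{A_i(0)}{6-m_i}=\dfrac{\pi\tilde T}{3}$ is \emph{independent of} $i$ (in particular $A_i(0)>0$ forces each $m_i<6$). Thus every region must reach zero area at the very same instant; this is the single scalar constraint that a $\Gamma$-type singularity imposes.

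This suggests the perturbation. Starting from $\Gamma$, I would produce a nearby regular network $\tilde\Gamma$ of the same combinatorial type that violates $A_i(0)/(6-m_i)=A_j(0)/(6-m_j)$ for some pair $i\ne j$. Concretely, one pushes a boundary arc of $R_i$ that is not shared with $R_j$ (for instance an arc adjacent to the unbounded region) slightly outward, enlarging $A_i$ while leaving $A_j$ fixed, and readjusts near the endpoints to preserve the $\tfrac{2\pi}{3}$ junction angles. For the resulting flow each region still collapses at constant rate, but now at \emph{distinct} times $t_i = \tfrac{3A_i(0)}{\pi(6-m_i)}$. Hence at the first singular time $\min_i t_i$ only a proper subset of the regions has collapsed while the others retain positive area, so the tangent flow there cannot contain all $n$ regions and cannot equal $\Gamma$. (If the first singularity is instead of a type not involving region collapse, it is a fortiori not $\Gamma$.)

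The main obstacle I anticipate is the step connecting the \emph{tangent flow}, a parabolic rescaling limit, to the \emph{actual} areas of the regions of $\tilde\Gamma_t$: one must verify that $\Gamma$ appearing as the blow-up limit genuinely forces $n$ honest regions of $\tilde\Gamma_t$ to collapse to the single point $p$ at a common time $\tilde T$, with areas governed by the affine law (1) up to that time. Secondary technical points are confirming that each $R_i$ is a topological disk (so the turning-angle count is exactly as stated, with no correction from handles or interior boundaries) and checking that a suitable non-shared arc always exists, so that the ratio can genuinely be altered by an admissible, Herring-compatible perturbation. Once the rigidity of simultaneous collapse is in hand, the remaining instability argument is essentially combinatorial.
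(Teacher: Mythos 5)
Your proposal is correct and takes essentially the same route as the paper: the Gauss--Bonnet lemma giving the topologically determined area-decrease rate $\frac{\pi}{3}(m-6)$, followed by a perturbation of the area ratios so that the enclosed regions cannot all vanish simultaneously, forcing the tangent flow to have at most one enclosed region. The paper's own proof is a terser version of your argument and, like yours, leaves implicit the step relating the blow-up limit to the simultaneous collapse of the actual regions.
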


\subsection{Structure of this paper}

The paper is organized as follows. In section \ref{sec:2region}, we obtain an immediate result that the regular with two or more enclosed regions can be perturbed away. We don't need to use $F$-functional at this stage since an argument considering the area of each enclosed region is sufficient. In section \ref{sec:F_functional}, we introduce the $F$-functional. Sections \ref{sec:1st_var} through \ref{sec:2nd_var} focus on computing the first and the second variation formula of the $F$-functional. We can obtain an eigenfunction problem from the second variation formula. Section \ref{sec:eigunfunction} describes the eigenfunction problem and the eigenfunctions corresponding to translation in space and in time. They are the eigenfunctions with important geometry meaning. After that, we can deal with individual regular shrinkers. Sections \ref{sec:stars} and \ref{sec:fish_rocket} deals the variation in the case of 4-ray star, 5-ray star and the case of fish, rocket, respectively. Finally, section \ref{sec:F_unstable} establishes the $F$-unstableness by cutting off the function of variation to obtain compact variations which satisfies the condition in the definition of $F$-stability. Section \ref{sec:appendix} collect some previous numerical results of known regular shrinkers needed in the proof.

\subsection{Notations}
Throughout this paper, we will use the following notations. $\Gamma$ will denote a possibly open network. Let $O_i$ be the multi-junctions of $\Gamma$. Note that in section \ref{sec:1st_var} we don't assume $\Gamma$ to be regular. In that section, the angle between curves may not be $\frac{2\pi}{3}$ and there may be more than 3 curves meeting at a multi-junction $O_i$.

On a curve $\gamma$, let $T$ be a unit tangent vector field and $N$ be a unit normal vector field. We don't require $\{T,N\}$ to be positively oriented. Let $k$ denotes the curvature with respect to $N$, i.e. $\nabla_TT=kN$. Note that the sign of $k$ depends on the choice of $N$ but it is independent of the choice of $T$.

At a multi-junction $O_i$, let $\gamma_i^j$ be the curves with endpoint $O_i$. We use $\hat{T}_i^j$ to denote the unit tangent vector at $O_i$ which points towards the $j$th curve. For a function $v$ which is continuous on each curve, we use $v_i^j=\lim_{P\in\gamma_i^j,P\to O_i}v(P)$ to denote the boundary value seen from the curve $\gamma_i^j$.

For any vector field $V$ on the network, we decompose it to the normal part and the tangent part $V=vN+v^TT$. We often use capital letters to denote a vector field and lowercase letters to denote a scalar function except for the position vector $x$ and its variation $y$, $z$.

\section{Unstableness of regular shrinkers with two or more enclosed regions}\label{sec:2region}
We start from the most unstable case: regular shrinkers with two or more enclosed regions. First, we need a lemma to describe the area decreasing rate of an enclosed region under network flow. This is a well-known lemma. We include the statement and the proof here for completeness.
\begin{lem}
If an enclosed region has $m$ edges, the decreasing rate of area $a$ is
\begin{equation}\nonumber
\frac{da}{dt}=(m-6)\frac{\pi}{3}.
\end{equation}
\end{lem}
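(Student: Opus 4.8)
The plan is to combine the first variation of enclosed area with the Gauss--Bonnet theorem for a region with corners. First I would recall that under the network flow each curve moves with normal velocity equal to its curvature vector $kN$, and that the first variation of the area $a$ enclosed by a piecewise smooth closed boundary depends only on the normal component of this velocity. Orienting the boundary of the region $\Omega$ counterclockwise and letting $\nu$ denote the inward unit normal, the transport formula for area gives
\[
\frac{da}{dt} = -\int_{\partial\Omega} k_g \, ds,
\]
where $k_g$ is the curvature of the boundary measured with respect to $\nu$. The tangential motion of the boundary, in particular the sliding of the triple junctions, contributes nothing to first order, so only the edge integrals survive.

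Next I would evaluate $\int_{\partial\Omega} k_g \, ds$ using Gauss--Bonnet. Since the region is a topological disk in the flat plane, the Gaussian curvature term vanishes and Gauss--Bonnet reads
\[
\int_{\partial\Omega} k_g \, ds + \sum_{i} \epsilon_i = 2\pi,
\]
where $\epsilon_i$ is the exterior angle at the $i$th corner. The corners are exactly the $m$ triple junctions on the boundary of the region. By the Herring condition the two boundary edges at each junction meet at angle $\frac{2\pi}{3}$, so the interior angle of the region there is $\frac{2\pi}{3}$ and the exterior angle is $\epsilon_i = \pi - \frac{2\pi}{3} = \frac{\pi}{3}$. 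Summing over the $m$ junctions gives $\sum_i \epsilon_i = \frac{m\pi}{3}$, hence
\[
\int_{\partial\Omega} k_g \, ds = 2\pi - \frac{m\pi}{3}.
\]

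Combining the two displays yields
\[
\frac{da}{dt} = -\left(2\pi - \frac{m\pi}{3}\right) = (m-6)\frac{\pi}{3},
\]
as claimed. The two computations above are routine; the only point requiring genuine care is the bookkeeping of signs and the corner terms. Specifically, I would fix the orientation and normal conventions consistently between the area formula and Gauss--Bonnet, so that the same signed curvature $k_g$ appears in both, and I would justify that the motion of the triple junctions produces no extra boundary contribution to $\frac{da}{dt}$. This last point is the main thing to verify: it holds because the enclosed area is a geometric quantity insensitive to tangential reparametrization, and the junction displacement is shared by the adjacent edges, so its contributions cancel.
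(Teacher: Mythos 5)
Your proposal is correct and follows exactly the paper's argument: the area derivative equals $-\int_{\partial\Omega}k\,ds$, which Gauss--Bonnet evaluates as $-(2\pi-\sum_i\epsilon_i)$ with exterior angles $\epsilon_i=\pi/3$ forced by the Herring condition at the $m$ triple junctions. You simply spell out the sign conventions and the vanishing of the junction contributions, which the paper leaves implicit.
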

\begin{proof}
\begin{equation}\nonumber
-\frac{da}{dt}=\int kds=2\pi-\sum{\phi_i}=2\pi-m\frac{\pi}{3}.
\end{equation}
By Gauss-Bonnet theorem. The decreasing rate is determined by the number of vertices.
\end{proof}
Now, we can establish theorem \ref{thm:2region}.
\begin{proof}[Proof of theorem \ref{thm:2region}]
The decreasing rate of the area of an enclosed region with $m$ edges is completely determined by $m$. We have
\begin{equation}\nonumber
\frac{da}{dt}=(m-6)\frac{\pi}{3}.
\end{equation}
If we perturb the network such that the area ratio between different regions changes, the area of different regions may not goes to zero at the same time. At the singularity, at most one enclosed region vanishes. Therefore, the tangent flow for the singularity has at most one enclosed region.
\end{proof}

\section{The F-functional}\label{sec:F_functional}
Now, we focus on the regular shrinkers with one enclosed region. To show some of such regular shrinkers are unstable, we need to use the $F$-functional and the entropy. Define the backward heat kernel $\Phi:\mathbb{R}^2\times(-\infty,0)\to\mathbb{R}$ by
\begin{equation}
    \Phi(x,t)=\frac{1}{\sqrt{-4\pi t}}e^\frac{|x|^2}{4t}.
\end{equation}
For $x_0\in\mathbb{R}^2$, $t_0\in(0,\infty)$, set $\Phi_{(x_0,t_0)}(x,t)=\Phi(x-x_0,t-t_0)$. Let $\Gamma$ be a network. The $F$-functional is defined as
\begin{equation}
    F_{x_0,t_0}(\Gamma)=\frac{1}{\sqrt{4\pi t_0}}\int_\Gamma e^{-\frac{|x-x_0|^2}{4t_0}}d\sigma=\int_\Gamma \Phi_{(x_0, t_0)}(x,0)d\sigma.
\end{equation}
The Huisken's monotonicity formula is first established by Huisken in \cite{H}. It is generalized to the network flow in \cite {MNPS}. If a family of open network $\Gamma_t$ flows according to the network flow, then
\begin{equation}
    \frac{d}{dt}\int_{\Gamma_t}\Phi_{(x_0,t_0)}(x,t)d\sigma=-\int_{\Gamma_t}\left|kN+\frac{(x-x_0)^\perp}{2(t_0-t)}\right|^2\Phi_{(x_0,t_0)}(x,t)d\sigma.
\end{equation}
This implies $F_{(x_0,t_0-t)}(\Gamma_t)=\int_{\Gamma_t}\Phi_{(x_0,t_0)}(x,t)d\sigma$ is nonincreasing. We have the following properties of the $F$-functional:
\begin{enumerate}
    \item Translation invariance: For any $y\in\mathbb{R}^2$, $F_{(0,t_0)}(\Gamma-y)=F_{(y,t_0)}(\Gamma)$. 
    \item Scaling invariance: For any $\alpha>0$, $F_{(0,\alpha^2 t_0)}(\alpha\Gamma)=F_{(0,t_0)}(\Gamma)$.
    \item Monotonicity: For all $t_1<t_2$, $F_{(x_0,t_0)}(\Gamma_{t_1})>F_{(x_0,t_0+(t_1-t_2))}(\Gamma_{t_2})$.
\end{enumerate}
The first two properties are just change of variable on $\Gamma$. From the translation and scaling invariance, we can reduce the study of $F$-functional at $(x_0,t_0)$ to the $F$-functional at $(0,1)$. The last property is obtained from Huisken's monotonicity formula.

The entropy $\lambda$ is defined by
\begin{equation}
    \lambda(\Gamma)=\sup_{x_0,t_0}F_{x_0,t_0}(\Gamma).
\end{equation}
The entropy is nonincreasing under network flow. It is invariant under scaling and rotation. Therefore, it is an important tool to determine whether the tangent flow of the perturbed network can flow back to the same singularity.

\section{The first variation of $F_{x_0, t_0}$ and regular shrinkers}\label{sec:1st_var}
In this section, we will derive the first variation formula of the $F$-functional. Since the $F$-functional can be defined not only on regular networks but on any networks, we derive in the general case that a network $\Gamma$ need not be regular.
\begin{defn}
$\Gamma_s$ is a variation of $\Gamma$ if $\Gamma_s$ is a one parameter family of embeddings $X_s:\Gamma\to\mathbb{R}^2$ with $X_0$ equal to the identity. The vector field $\frac{\partial X_s}{\partial s}|_{s=0}$ is the variation vector field.
\end{defn}

On a curve, we can reparametrize such that the variation vector field only has the normal component. However, when there are multi-junctions, we need to deal with the tangent component carefully. The following lemma is useful. We may consider the normal variation at the smooth part and the tangent variation at the endpoints.

\begin{lem}\label{lem:normal}
On a curve $\gamma$ from $P_1$ to $P_2$, for any variation vector field $V=v N+v^TT$, any function $f\in\mathbf{C}^1(\mathbb{R}^2)$, we have
\begin{equation}
    \left(\partial_s\int_\gamma fd\sigma\right)\bigg|_{s=0}=\int_\gamma v\left(\partial_N f-fk\right)d\sigma-\langle V,\hat{T}\rangle f(P_1)-\langle V,\hat{T}\rangle f(P_2),
\end{equation}
where we choose the tangent vector $\hat{T}(P_i)$ points towards the curve at the endpoints. Note that we cannot continuously define the tangent vector $\hat{T}$ from one end point to the other one.
\end{lem}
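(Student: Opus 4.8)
The plan is to reduce the statement to a standard first-variation-of-weighted-arclength computation on a fixed parameter interval and then integrate by parts, tracking the boundary contributions carefully. First I would fix a parametrization $X(u,s)$ of the family $\Gamma_s$ with $u\in[u_1,u_2]$, $X(u,0)=\gamma(u)$, with $X(u_1,s)$, $X(u_2,s)$ the images of $P_1,P_2$, and $\partial_s X|_{s=0}=V$. Writing the weighted length as $\int_{u_1}^{u_2} f(X(u,s))\,|\partial_u X(u,s)|\,du$ and differentiating under the integral sign at $s=0$ produces two terms: one from differentiating $f$, giving $\langle\nabla f,V\rangle$, and one from differentiating the arclength element $|\partial_u X|$.

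The key computation is the variation of the length element. Using $\partial_s|\partial_u X| = \langle \partial_s\partial_u X, \partial_u X\rangle/|\partial_u X|$ and commuting $\partial_s$ with $\partial_u$, one finds that at $s=0$ this equals $|\partial_u X|\,\langle \nabla_T V, T\rangle$ after passing to the arclength parameter $\sigma$. I would then rewrite $\langle \nabla_T V, T\rangle = \partial_\sigma\langle V,T\rangle - \langle V,\nabla_T T\rangle = \partial_\sigma v^T - vk$, using $\nabla_T T = kN$ and $\langle V, N\rangle = v$. Decomposing the gradient term as $\langle \nabla f, V\rangle = v\,\partial_N f + v^T\partial_\sigma f$ (since $\partial_\sigma f = \langle \nabla f, T\rangle$), the full integrand becomes $v\,\partial_N f + v^T\partial_\sigma f + f(\partial_\sigma v^T - vk)$.

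The next step is to observe that the two tangential-derivative terms assemble into a total derivative, $v^T\partial_\sigma f + f\,\partial_\sigma v^T = \partial_\sigma(f v^T)$. Integrating this over $\gamma$ yields only the boundary values $f(P_2)v^T(P_2) - f(P_1)v^T(P_1)$, while the remaining interior terms collapse to $v(\partial_N f - fk)$, which is exactly the claimed integrand. The last point to settle, and the only place requiring genuine care, is the sign convention for $\hat{T}$: since $\hat{T}$ is chosen to point into the curve at each endpoint, at the terminal point $P_2$ it is $-T$ (the parametrization tangent exits the curve there) while at the initial point $P_1$ it coincides with $T$. Substituting $v^T = \langle V, T\rangle$ together with $\hat{T}(P_1)=T(P_1)$ and $\hat{T}(P_2)=-T(P_2)$ converts the boundary term into $-\langle V,\hat{T}\rangle f(P_1) - \langle V,\hat{T}\rangle f(P_2)$, completing the identity.

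I expect the main obstacle to be purely the endpoint sign bookkeeping, together with justifying the differentiation under the integral sign and the commutation of $\partial_s$ and $\partial_u$; the interior algebra is routine. It is also worth noting that nowhere does the argument use regularity of $\Gamma$ or the Herring angle condition, which is consistent with the lemma being stated for a single curve and later summed over the curves meeting at each multi-junction $O_i$.
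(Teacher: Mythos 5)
Your proposal is correct and follows essentially the same route as the paper: compute $\partial_s d\sigma=(\partial_T v^T-vk)\,d\sigma$, split $\langle\bar\nabla f,V\rangle=v\,\partial_Nf+v^T\partial_Tf$, recognize the tangential pieces as the total derivative $\partial_T(v^Tf)$, and convert the resulting endpoint values into the inward-tangent convention. The only difference is that you derive the variation of the arc-length element from a fixed parametrization, whereas the paper simply quotes it; your endpoint sign bookkeeping matches the paper's boundary term exactly.
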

\begin{proof} Note that we have $\frac{d}{ds}d\sigma=(\partial_T v^T-v k)d\sigma$.
\begin{equation}
\begin{split}
    \partial_s\int_\gamma fd\sigma&=\int_\gamma\left(v\partial_N f+v^T \partial_T f+f(\partial_T v^T-v k)\right)d\sigma\\
    &=\int_\gamma\left(v(\partial_N f-kf)+\partial_T(v^T f)\right)d\sigma=\int_\gamma v(\partial_N f-kf)d\sigma - \sum_{1,2} \langle V,\hat{T} \rangle f(P_i).
\end{split}
\end{equation}
\end{proof}

Now, we can compute the first variation formula of the $F$-functional. Here, we introduce the following notation: For a function $f$ on $\Gamma$,
\begin{equation}
    \left[f\right]_{(x_0,t_0)}=\int_\Gamma f\Phi_{(x_0,t_0)}(x,0)d\sigma
\end{equation}
\begin{lem}[First variation formula]
Let $\Gamma_s$ be a variation of $\Gamma$ with the variation vector field $V$. If $x_s$ and $t_s$ are variations of $x_0$ and $t_0$ with $x_0'=y$ and $t_0'=h$, the first variation $\partial_s\left(F_{(x_s,t_s)}(\Gamma_s)\right)|_{s=0}$ is given by
\begin{equation}\label{eq:1st_var}
\begin{split}
    &-\left[v\left(k+\frac{\langle x-x_0,N\rangle}{2t_0}\right)\right]_{(x_0,t_0)}+\left[h\left(\frac{|x-x_0|^2}{4t_0^2}-\frac{1}{2t_0}\right)+\frac{\langle x-x_0,y\rangle}{2t_0}\right]_{(x_0,t_0)}\\
    &-\frac{1}{\sqrt{4\pi t_0}}\sum_i \left\langle  \sum_j\hat{T}_i^j,V(O_i)\right\rangle e^{-\frac{|O_i-x_0|^2}{4t_0}}.
\end{split}
\end{equation}
\end{lem}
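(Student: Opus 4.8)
The plan is to observe that $F_{(x_s,t_s)}(\Gamma_s)$ depends on $s$ through three independent channels: the moving network $\Gamma_s$ (encoded by $V$), the moving center $x_s$ (encoded by $y=x_0'$), and the moving time $t_s$ (encoded by $h=t_0'$). Since $\partial_s$ acts by the chain rule, the first variation splits as a sum of three contributions, and I would compute each separately while holding the other two data fixed. Writing $f = \Phi_{(x_0,t_0)}(\cdot,0) = (4\pi t_0)^{-1/2} e^{-|x-x_0|^2/4t_0}$, so that $F_{(x_0,t_0)}(\Gamma)=\int_\Gamma f\, d\sigma$ and $[\,g\,]_{(x_0,t_0)}=\int_\Gamma g f\,d\sigma$, the first channel varies the domain of integration with $f$ frozen, while the other two vary $f$ with the domain frozen.

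For the domain-variation channel I would apply Lemma \ref{lem:normal} curve by curve with this particular $f$. The only inputs needed are the normal derivative and the curvature correction of the Gaussian: since $\nabla f = -f\,(x-x_0)/(2t_0)$ one gets $\partial_N f - kf = -f\bigl(k + \langle x-x_0,N\rangle/(2t_0)\bigr)$, so that summing the interior integrals of Lemma \ref{lem:normal} over all curves produces exactly the first bracketed term $-[\,v(k+\langle x-x_0,N\rangle/2t_0)\,]_{(x_0,t_0)}$. The boundary contributions of Lemma \ref{lem:normal} are then collected at the vertices: at each multi-junction $O_i$ the incident curves contribute $-\sum_j \langle V(O_i),\hat T_i^j\rangle f(O_i)$, and since a genuine variation of the network keeps the junction intact, the variation vector field takes a single value $V(O_i)$ there and factors out of the sum over $j$, giving $-\langle \sum_j \hat T_i^j,\, V(O_i)\rangle f(O_i)$. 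Evaluating $f(O_i)=(4\pi t_0)^{-1/2}e^{-|O_i-x_0|^2/4t_0}$ reproduces the junction sum on the third line. Any free external endpoints contribute terms of the same shape, but these vanish under the standing assumption that the variations are supported away from the unbounded ends, so $V$ is zero there.

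For the two remaining channels the domain is fixed, hence no boundary terms appear and I would simply differentiate $f$ under the integral sign. A direct computation gives $\langle\nabla_{x_0} f,\,y\rangle = f\,\langle x-x_0,\, y\rangle/(2t_0)$ and $\partial_{t_0} f = f\bigl(|x-x_0|^2/(4t_0^2) - 1/(2t_0)\bigr)$; integrating these against the fixed network and weighting by $y$ and $h$ respectively yields precisely the second bracketed term of \eqref{eq:1st_var}. Adding the three contributions produces the stated formula.

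The main obstacle is the bookkeeping at the junctions rather than any hard analysis. One must orient each $\hat T_i^j$ to point into its own curve, keep track of the sign coming from the endpoint convention in Lemma \ref{lem:normal}, and justify pulling $V(O_i)$ out of the sum over incident curves using the continuity of the variation vector field at the multi-junction. Everything else reduces to the elementary derivatives of the Gaussian kernel recorded above.
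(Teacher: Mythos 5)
Your proposal is correct and follows essentially the same route as the paper: the paper likewise applies Lemma \ref{lem:normal} to handle the domain variation (producing the $-vk$ term and the junction boundary terms) and differentiates the Gaussian kernel in $x_s$ and $t_s$ under the integral, merely organizing the computation curve-by-curve rather than channel-by-channel before summing. Your sign bookkeeping at the junctions and the elementary derivatives $\nabla f=-f(x-x_0)/(2t_0)$, $\partial_{t_0}f=f\bigl(|x-x_0|^2/(4t_0^2)-1/(2t_0)\bigr)$ match the paper's computation exactly.
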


\begin{proof} We want to differentiate
\begin{equation}
    F_{(x_s,t_s)}(\Gamma_s)=\frac{1}{\sqrt{4\pi t_s}}\int_{\Gamma_s} e^{-\frac{|x-x_s|^2}{4t_s}}d\sigma
\end{equation}
with respect to $s$. Let $\gamma$ be a curve connecting $O_{i_1}$ and $O_{i_2}$, we have
\begin{equation}
\begin{split}
    \partial_s&\left(F_{(x_s,t_s)}(\gamma_s)\right)|_{s=0}=\left(-\frac{h}{2t_0}\right)\frac{1}{\sqrt{4\pi t_0}}\int_{\gamma} e^{-\frac{|x-x_0|^2}{4t_0}}d\sigma\\
    &+\frac{1}{\sqrt{4\pi t_0}}\int_{\gamma} \left(h\frac{|x-x_0|^2}{4t_0^2}-\frac{\langle x-x_0,v N-y\rangle}{2t_0}-v k\right)e^{-\frac{|x-x_0|^2}{4t_0}}d\sigma\\
    &-\frac{1}{\sqrt{4\pi t_0}}\left( \langle \hat{T}(O_{i_1}),V(O_{i_1})\rangle e^{-\frac{|O_{i_1}-x_0|^2}{4t_0}}+\langle \hat{T}(O_{i_2}),V(O_{i_2})\rangle e^{-\frac{|O_{i_2}-x_0|^2}{4t_0}}\right).
\end{split}
\end{equation}
Summing over all curves yields the result.
\end{proof}

\begin{rmk}
Since the $F$-functional satisfies $F_{(x_0,\alpha^2t_0)}(\Gamma)=F_{(0,t_0)}(\alpha^{-1}(\Gamma-x_0))$, to study the critical point of the $F$-functional, we can reduce the problem to the case $(x_0,t_0)=(0,1)$. In this case, the first variational formula becomes
\begin{equation}
\begin{split}
    \partial_s(F_{(x_s,t_s)}(\Gamma_s))|_{s=0}=&-\left[v\left(k+\frac{\langle x,N\rangle}{2}\right)+h\left(\frac{|x|^2}{4}-\frac{1}{2}\right)+\frac{\langle x,y\rangle}{2}\right]_{(0,1)}\\
    &-\frac{1}{\sqrt{4\pi}}\sum_i \left\langle  \sum_j\hat{T}_i^j,V(O_i)\right\rangle e^{-\frac{|O_i|^2}{4}}.
\end{split}    
\end{equation}
\end{rmk}

\begin{defn}
A multi-junction $O_i$ is balanced if $\sum_j\hat{T}_i^j=0$. A general network $\Gamma$ is balanced if every multi-junction $O_i$ are balanced. Moreover, if $\Gamma$ also satisfies \begin{equation}
    k+\frac{\langle x,N\rangle}{2}=0
\end{equation}
at regular points, it is called a balanced shrinker.
\end{defn}
\begin{rmk}
If $O_i$ is a balanced triple-junction, we can deduce that the angle between the tangents $\hat{T}_i^j$ are $\frac{2\pi}{3}$. It satisfies the Herring condition at this multi-junction. Therefore, if all multi-junctions of a balanced network $\Gamma$ are triple junctions, we can deduce $\Gamma$ is regular.
\end{rmk}

\begin{thm}
The critical points of $F$ at $(0,1)$ are balanced shrinkers.
\end{thm}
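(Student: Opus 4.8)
The plan is to read ``critical point of $F$ at $(0,1)$'' as a critical point of the functional $\Gamma\mapsto F_{(0,1)}(\Gamma)$, so that only the network is varied while the basepoint and scale are held fixed, i.e. $y=0$ and $h=0$. With this choice the remark's first variation formula collapses to
\begin{equation}
\partial_s\big(F_{(0,1)}(\Gamma_s)\big)\big|_{s=0}=-\left[v\left(k+\tfrac{\langle x,N\rangle}{2}\right)\right]_{(0,1)}-\frac{1}{\sqrt{4\pi}}\sum_i\left\langle\sum_j\hat{T}_i^j,V(O_i)\right\rangle e^{-\frac{|O_i|^2}{4}},
\end{equation}
and $\Gamma$ is critical precisely when this expression vanishes for every admissible variation vector field $V$.

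First I would isolate the regular part. Fixing any regular point, I choose a variation supported in a small arc of a single curve and away from every junction; then $V(O_i)=0$ for all $i$ and the junction sum drops out, leaving $-\big[v\,(k+\langle x,N\rangle/2)\big]_{(0,1)}=0$ for every such normal variation $v$. Since the weight $\Phi_{(0,1)}(x,0)=(4\pi)^{-1/2}e^{-|x|^2/4}$ is strictly positive, the integrand is $v\cdot(k+\langle x,N\rangle/2)$ against a positive density, so the fundamental lemma of the calculus of variations forces $k+\langle x,N\rangle/2=0$ at every regular point. This is exactly the shrinker equation.

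With the shrinker equation in hand the bracket term vanishes identically, so criticality reduces to $\sum_i\langle\sum_j\hat{T}_i^j,V(O_i)\rangle e^{-|O_i|^2/4}=0$ for every variation. Here I would exploit that at a junction all incident curves share the moving endpoint, so $V(O_i)$ is a single well-defined vector that can be prescribed freely: given a junction $O_{i_0}$ and any $W\in\mathbb{R}^2$, I build a variation displacing $O_{i_0}$ in the direction $W$ (dragging the endpoints of all incident curves together and cutting off along each curve) while fixing the other junctions. Because $e^{-|O_{i_0}|^2/4}>0$, this yields $\langle\sum_j\hat{T}_{i_0}^j,W\rangle=0$ for all $W$, hence $\sum_j\hat{T}_{i_0}^j=0$. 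Running over all junctions shows $\Gamma$ is balanced, and combined with the previous step $\Gamma$ is a balanced shrinker; the converse is immediate from the same formula, so critical points are in fact exactly the balanced shrinkers.

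The main obstacle is not an estimate but the admissibility of the test variations. I must make sure that in each step the chosen $V$ arises from a genuine variation in the sense of the definition, an embedding with $X_0=\mathrm{id}$, and in particular that displacing a junction keeps $\Gamma_s$ a valid, topologically unchanged network with all incident curves still meeting at the single image point, so that $V(O_i)$ is single-valued. Once the existence of such localized variations and the single-valuedness of a junction displacement are checked, the two applications of ``test against arbitrary data, then invoke positivity of the Gaussian weight'' complete the argument.
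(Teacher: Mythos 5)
Your argument is correct and is essentially the paper's own proof: test first with variations supported on a single curve away from the junctions to extract the shrinker equation, then with variations moving a single junction to force $\sum_j\hat{T}_i^j=0$; your extra care about admissibility of the localized variations is a welcome but minor elaboration. The only caveat is your closing aside that ``the converse is immediate from the same formula'' --- since criticality also involves the $y$ and $h$ variations, the converse actually needs the identities $[[x]]=0$ and $[[\,|x|^2-2\,]]=0$ (the paper's Lemma \ref{lem:bracket}), but that direction is not part of the statement being proved.
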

\begin{proof}
Let $\Gamma$ be a critical point of the $F$-functional when $(x_0,t_0)=(0,1)$. From the first variation formula \eqref{eq:1st_var}, we can choose an arbitrary variation vector field $V$ to be supported on a single curve, hence
\begin{equation}
    k+\frac{\langle x,N\rangle}{2}=0.
\end{equation}
Again, since we can choose $V$ arbitrarily at the multi-junction $O_i$, the multi-junction must be balanced. This concludes the proof of the theorem.
\end{proof}

In the last theorem, it is showed that the critical point of the $F$-functional must be a balanced shrinker. Now, we establish the opposite. Here, we define the drift Laplacian $\mathcal{L}$. Since we focus on the variation at $(x_0,t_0)=(0,1)$, we use $[[f]]$ for $[f]_{(0,1)}$. We also use $\bar{\nabla}$ to denote the gradient in $\mathbb{R}^2$ and use $\nabla$ and $\Delta$ to denote the operations on $\Gamma$.
\begin{defn}
The drift laplacian $\mathcal{L}$ is defined as
\begin{equation}
    \mathcal{L}v=\Delta v-\frac{1}{2}\left\langle x, \nabla v\right\rangle=e^\frac{|x|^2}{4}\mathbf{div}\left(e^{-\frac{|x|^2}{4}}\nabla v\right).
\end{equation}
Note that $\Delta$ and $\nabla$ is calculated on $\Gamma$.
\end{defn}
\begin{lem}
If $\Gamma\subset \mathbb{R}^2$ is a balanced network, $v\in\mathbf{C}^1(\mathbb{R}^2)$, $w\in\mathbf{C}^2(\mathbb{R}^2)$, then
\begin{equation}
    \left[\left[v\mathcal{L}w\right]\right]=-\left[\left[\langle \nabla v,\nabla w\rangle\right]\right].
\end{equation}
\end{lem}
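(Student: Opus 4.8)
The plan is to recognize the claimed identity as a weighted (drift) Green's identity on the network and to prove it by integrating by parts curve-by-curve, with the balanced hypothesis being exactly what discards the junction terms. The starting point is the divergence form of the drift Laplacian: since $\mathcal{L}w = e^{|x|^2/4}\mathbf{div}(e^{-|x|^2/4}\nabla w)$, multiplying by the Gaussian weight gives $e^{-|x|^2/4}\mathcal{L}w = \mathbf{div}(e^{-|x|^2/4}\nabla w)$, so that
\begin{equation}\nonumber
\left[\left[v\mathcal{L}w\right]\right] = \frac{1}{\sqrt{4\pi}}\int_\Gamma v\,\mathbf{div}\!\left(e^{-\frac{|x|^2}{4}}\nabla w\right)d\sigma.
\end{equation}

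First I would work on a single curve $\gamma$ from $P_1$ to $P_2$. On a curve the gradient and divergence reduce to arc-length derivatives, $\nabla w = (\partial_T w)T$ and $\mathbf{div}(\phi T)=\partial_T\phi$, so integrating by parts along $\gamma$ yields the interior term $-\int_\gamma (\partial_T v)(\partial_T w)e^{-|x|^2/4}d\sigma = -\int_\gamma\langle\nabla v,\nabla w\rangle e^{-|x|^2/4}d\sigma$ together with endpoint contributions. Choosing $\hat{T}(P_i)$ to point into $\gamma$ (the convention of Lemma \ref{lem:normal}), the boundary contribution of $\gamma$ is $-\sum_{i=1,2} v(P_i)e^{-|x(P_i)|^2/4}\langle\nabla w,\hat{T}(P_i)\rangle$; this is the same sign bookkeeping that produced the $-\langle V,\hat{T}\rangle$ terms in the first variation computation.

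The key step is to sum these endpoint terms over all curves and regroup them at each multi-junction $O_i$. Because $v\in\mathbf{C}^1(\mathbb{R}^2)$ is single-valued across the junction, the common value $v(O_i)$ and the weight $e^{-|O_i|^2/4}$ factor out, leaving $-v(O_i)e^{-|O_i|^2/4}\sum_j\langle\nabla w,\hat{T}_i^j\rangle$. Here I would use $w\in\mathbf{C}^2(\mathbb{R}^2)$ to rewrite each tangential derivative as a projection of the single ambient gradient, $\langle\nabla w,\hat{T}_i^j\rangle=\langle\bar\nabla w(O_i),\hat{T}_i^j\rangle$, so that $\sum_j\langle\nabla w,\hat{T}_i^j\rangle=\langle\bar\nabla w(O_i),\sum_j\hat{T}_i^j\rangle$. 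The balanced condition $\sum_j\hat{T}_i^j=0$ now makes every junction term vanish.

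The only delicate point, and the one place the hypotheses are genuinely used, is this cancellation at the junctions: one must keep the orientation convention for $\hat{T}$ consistent so that the collected endpoint terms really assemble into $\langle\bar\nabla w(O_i),\sum_j\hat{T}_i^j\rangle$, and one must invoke that $v$ is continuous on $\mathbb{R}^2$ (not merely on each curve) so that the scalar $v(O_i)$ can be pulled out of the sum over $j$. Once the boundary terms are gone, what remains is precisely $-\frac{1}{\sqrt{4\pi}}\int_\Gamma\langle\nabla v,\nabla w\rangle e^{-|x|^2/4}d\sigma = -\left[\left[\langle\nabla v,\nabla w\rangle\right]\right]$, which is the claim.
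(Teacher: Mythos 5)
Your proof is correct and follows essentially the same route as the paper: write the weighted integrand as a divergence, integrate by parts curve-by-curve, and observe that at each junction the single-valuedness of $v$ and the identification $\langle\nabla w,\hat{T}_i^j\rangle=\langle\bar{\nabla}w(O_i),\hat{T}_i^j\rangle$ let the boundary terms collapse to $-v(O_i)e^{-|O_i|^2/4}\langle\bar{\nabla}w,\sum_j\hat{T}_i^j\rangle$, which vanishes by the balanced condition. The paper merely packages the two interior terms into the single divergence $\mathbf{div}(v\nabla w\,e^{-|x|^2/4})$ before applying the divergence theorem, which is the same computation in a slightly different order.
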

\begin{proof}
\begin{equation}\label{integ_by_parts}
    \int_\gamma (v\mathcal{L}w+\langle \nabla v,\nabla w\rangle) e^{-\frac{|x|^2}{4}}d\sigma=\int_\gamma \mathbf{div}(v\nabla w e^{-\frac{|x|^2}{4}})d\sigma=-\sum_{\partial\gamma} v\langle \nabla w,\hat{T}\rangle e^{-\frac{|x|^2}{4}}.
\end{equation}
Applying the divergence theorem, there will be boundary terms. The contribution of boundary term at a triple junction $O_i$ is 
\begin{equation}
    -e^{-\frac{|x|^2}{4}}\sum_jv\left\langle\left(\nabla w\right)_i^j,\hat{T}_i^j\right\rangle\Big|_{O_i}=-e^{-\frac{|O_i|^2}{4}}v(O_i)\left\langle\bar{\nabla}w,\sum_j\hat{T}_i^j\right\rangle=0.
\end{equation}
The last equality holds since $\Gamma$ is balanced.
\end{proof}

\begin{rmk}
If $f,g\in\mathbf{C}^2(\mathbb{R}^2)$, we have
\begin{equation}
    \left[\left[f\mathcal{L}(g)\right]\right]=-\left[\left[\langle \nabla f,\nabla g\rangle\right]\right]=\left[\left[\mathcal{L}(f)g\right]\right].
\end{equation}
The operator $\mathcal{L}$ is symmetric with respect to $e^{-\frac{|x|^2}{4}}d\sigma$.
\end{rmk}

\begin{lem}
If $\Gamma\subset\mathbb{R}^2$ satisfies $k+\frac{\langle x,N\rangle}{2}=0$ at all regular points, we have
\begin{equation}
\begin{split}
    \mathcal{L}x_i&=-\frac{1}{2}x_i,\\
    \mathcal{L}|x|^2&=2-|x|^2.
\end{split}
\end{equation}
\end{lem}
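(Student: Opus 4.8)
The plan is to verify both identities pointwise on the regular part of $\Gamma$, where they are purely local statements involving no junctions or boundary terms. It therefore suffices to fix a single curve $\gamma$, parametrize it by arc length, and express the intrinsic operators through the tangent $T$ via $\nabla f=(\partial_T f)\,T$ and $\Delta f=\partial_T^2 f$, using the structural relation $\nabla_T T=kN$ from the Notations section. With these in hand the whole computation is elementary frame calculus.

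First I would treat the coordinate function $x_i$. Differentiating along the curve gives $\partial_T x_i=\langle e_i,T\rangle$, and a second differentiation together with $\nabla_T T=kN$ yields $\Delta x_i=\langle e_i,\nabla_T T\rangle=k\langle e_i,N\rangle$. On the other hand $\nabla x_i=\langle e_i,T\rangle\,T$, so $\langle x,\nabla x_i\rangle=\langle e_i,T\rangle\,\langle x,T\rangle$. Assembling these in $\mathcal{L}x_i=\Delta x_i-\tfrac12\langle x,\nabla x_i\rangle$ and substituting the shrinker equation $k=-\tfrac12\langle x,N\rangle$ produces
\[
\mathcal{L}x_i=-\tfrac12\big(\langle x,N\rangle\langle e_i,N\rangle+\langle x,T\rangle\langle e_i,T\rangle\big).
\]
The identity then closes by the orthonormal decomposition $x=\langle x,T\rangle T+\langle x,N\rangle N$, whose $i$-th component is exactly the bracket above, giving $\mathcal{L}x_i=-\tfrac12 x_i$.

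For $|x|^2$ I would proceed in the same spirit. Here $\partial_T|x|^2=2\langle x,T\rangle$, so $\nabla|x|^2=2\langle x,T\rangle T$ and hence $\langle x,\nabla|x|^2\rangle=2\langle x,T\rangle^2$. Differentiating once more and using $\partial_T x=T$ together with $\nabla_T T=kN$ gives $\Delta|x|^2=2\big(1+k\langle x,N\rangle\big)$; substituting the shrinker equation turns this into $2-\langle x,N\rangle^2$. Therefore $\mathcal{L}|x|^2=2-\langle x,N\rangle^2-\langle x,T\rangle^2$, and $|x|^2=\langle x,T\rangle^2+\langle x,N\rangle^2$ finishes the computation.

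Honestly there is no serious obstacle here — both identities reduce to bookkeeping once the intrinsic operators are written through $T$. The only points demanding care are conventional: the paper does not fix an orientation of $\{T,N\}$, so I would keep all signs tied to the convention $\nabla_T T=kN$ (every appearance of $k$ enters paired with $N$, so the sign ambiguity cancels), and I would state explicitly that both claims concern regular points only, so the junction contributions appearing in the earlier integration-by-parts lemma play no role.
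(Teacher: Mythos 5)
Your proposal is correct and follows essentially the same route as the paper: compute $\Delta x_i=k\langle e_i,N\rangle$ and $\Delta|x|^2=2+2k\langle x,N\rangle$ on each curve, substitute the shrinker equation $k=-\tfrac12\langle x,N\rangle$, and absorb the tangential piece into the drift term via the decomposition $x=\langle x,T\rangle T+\langle x,N\rangle N$. The only difference is cosmetic bookkeeping (you add the drift term directly while the paper rewrites the normal projection as the full gradient minus the tangential part), and your remarks about orientation and locality match the paper's conventions.
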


\begin{proof}
We have $\Delta x=kN$ on a curve, together with $k+\frac{\langle x,N\rangle}{2}=0$,
\begin{equation}
    \Delta x_i=\langle kN,e_i\rangle=-\frac{\langle x,N\rangle}{2}\langle N, \bar{\nabla} x_i\rangle=-\frac{1}{2}\langle x, \bar{\nabla} x_i\rangle+\frac{\langle x,T\rangle}{2}\langle T, \bar{\nabla} x_i\rangle.
\end{equation}
Note that $\frac{\langle x,T\rangle}{2}\langle T, \bar{\nabla} x_i\rangle=\frac{\langle x,\nabla x_i\rangle}{2}$. We obtain $\mathcal{L}x_i=-\frac{1}{2}x_i$. Now, use $\nabla|x|^2=2\langle x,T\rangle T$, we have
\begin{equation}
\begin{split}
    \Delta|x|^2&=2\langle\Delta x,x\rangle+2|\nabla x|^2=2\langle kN,x\rangle+2=2\cdot\left(-\frac{\langle x,N\rangle}{2}\right)\langle x,N\rangle\\
    &=-\langle x,N\rangle^2+2=2-|x|^2+\frac{1}{2}\langle x,\nabla|x|^2\rangle.
\end{split}
\end{equation}
\end{proof}

\begin{lem}\label{lem:bracket}
If $\Gamma$ is a balanced shrinker, then
\begin{equation}
\begin{split}
    \left[\left[|x|^2-2\right]\right]&=\left[\left[|x|^4-12+16k^2\right]\right]=0,\\
    \left[\left[x\right]\right]&=\left[\left[x|x|^2\right]\right]=0.
\end{split}
\end{equation}
Also, for any vector $W\in\mathbb{R}^2$, we have
\begin{equation}
    \left[\left[\langle x,W\rangle^2\right]\right]=\left[\left[2\langle W,T\rangle^2\right]\right].
\end{equation}
\end{lem}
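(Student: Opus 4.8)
The plan is to derive all the identities from three facts already established: the eigenvalue relations $\mathcal{L}x_i=-\frac12 x_i$ and $\mathcal{L}|x|^2=2-|x|^2$ (which hold because $\Gamma$ satisfies the shrinker equation), the integration-by-parts identity $\left[\left[v\mathcal{L}w\right]\right]=-\left[\left[\langle\nabla v,\nabla w\rangle\right]\right]$ (which holds because $\Gamma$ is balanced, so the junction terms vanish), and the self-adjointness $\left[\left[f\mathcal{L}g\right]\right]=\left[\left[(\mathcal{L}f)g\right]\right]$. First I would dispatch the two linear identities. Taking $v\equiv1$ in the integration-by-parts lemma gives $\left[\left[\mathcal{L}w\right]\right]=0$ for every $w\in\mathbf{C}^2(\mathbb{R}^2)$. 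Applying this to $w=|x|^2$ and using $\mathcal{L}|x|^2=2-|x|^2$ yields $\left[\left[|x|^2-2\right]\right]=0$; applying it to each coordinate $w=x_i$ and using $\mathcal{L}x_i=-\frac12 x_i$ yields $\left[\left[x_i\right]\right]=0$, hence $\left[\left[x\right]\right]=0$.

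For $\left[\left[x|x|^2\right]\right]=0$ I would use self-adjointness with $f=x_i$ and $g=|x|^2$, giving $\left[\left[x_i\mathcal{L}|x|^2\right]\right]=\left[\left[(\mathcal{L}x_i)|x|^2\right]\right]$. The left side is $\left[\left[x_i(2-|x|^2)\right]\right]=2\left[\left[x_i\right]\right]-\left[\left[x_i|x|^2\right]\right]=-\left[\left[x_i|x|^2\right]\right]$ using $\left[\left[x_i\right]\right]=0$, while the right side is $-\frac12\left[\left[x_i|x|^2\right]\right]$; equating the two forces $\left[\left[x_i|x|^2\right]\right]=0$ for each $i$.

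The final quadratic identity reduces to the same circle of ideas once one notices that $\langle x,W\rangle=\sum_i W_i x_i$ is itself an eigenfunction, $\mathcal{L}\langle x,W\rangle=-\frac12\langle x,W\rangle$. I would take $v=w=\langle x,W\rangle$ in the integration-by-parts lemma; since the ambient gradient of $\langle x,W\rangle$ is $W$, its tangential gradient on $\Gamma$ is $\nabla\langle x,W\rangle=\langle W,T\rangle T$, and the identity becomes $-\frac12\left[\left[\langle x,W\rangle^2\right]\right]=-\left[\left[\langle W,T\rangle^2\right]\right]$, which is exactly $\left[\left[\langle x,W\rangle^2\right]\right]=\left[\left[2\langle W,T\rangle^2\right]\right]$.

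The one genuinely computational identity is $\left[\left[|x|^4-12+16k^2\right]\right]=0$, and I expect the bookkeeping here to be the main obstacle. The key is to evaluate $\left[\left[|x|^2\mathcal{L}|x|^2\right]\right]$ in two ways: the eigenvalue relation gives $\left[\left[|x|^2(2-|x|^2)\right]\right]=2\left[\left[|x|^2\right]\right]-\left[\left[|x|^4\right]\right]$, while integration by parts with $v=w=|x|^2$ and $\nabla|x|^2=2\langle x,T\rangle T$ gives $-\left[\left[\left|\nabla|x|^2\right|^2\right]\right]=-4\left[\left[\langle x,T\rangle^2\right]\right]$. Writing $\left[\left[1\right]\right]=\lambda$ and recalling $\left[\left[|x|^2\right]\right]=2\lambda$ from the first step, this produces $\left[\left[|x|^4\right]\right]=4\lambda+4\left[\left[\langle x,T\rangle^2\right]\right]$. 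I would then rewrite the curvature term through the shrinker equation, $16k^2=4\langle x,N\rangle^2=4\left(|x|^2-\langle x,T\rangle^2\right)$ using the orthogonal splitting $|x|^2=\langle x,T\rangle^2+\langle x,N\rangle^2$, so that $\left[\left[16k^2\right]\right]=8\lambda-4\left[\left[\langle x,T\rangle^2\right]\right]$. Adding the three contributions, $\left[\left[|x|^4\right]\right]-12\lambda+\left[\left[16k^2\right]\right]=\left(4\lambda+4\left[\left[\langle x,T\rangle^2\right]\right]\right)-12\lambda+\left(8\lambda-4\left[\left[\langle x,T\rangle^2\right]\right]\right)=0$, which is the claim. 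The only delicate points are the double evaluation of $\left[\left[|x|^2\mathcal{L}|x|^2\right]\right]$ and the geometric rewriting of $k^2$; all the analytic weight is carried by the two lemmas already proved, so no new estimates are needed.
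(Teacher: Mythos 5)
Your proposal is correct and follows essentially the same route as the paper: both derive every identity from the eigenvalue relations $\mathcal{L}x_i=-\tfrac12 x_i$, $\mathcal{L}|x|^2=2-|x|^2$, together with the integration-by-parts/self-adjointness lemma, and both obtain the quartic identity by evaluating $\left[\left[|x|^2\mathcal{L}|x|^2\right]\right]$ in two ways and rewriting $\left|\nabla|x|^2\right|^2=4\left(|x|^2-\langle x,N\rangle^2\right)=4|x|^2-16k^2$ via the shrinker equation. The only difference is cosmetic bookkeeping (you substitute $\left[\left[1\right]\right]=\lambda$ and $\left[\left[|x|^2\right]\right]=2\lambda$ explicitly, while the paper keeps everything inside the brackets and adds $\left[\left[6(|x|^2-2)\right]\right]=0$ at the end).
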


\begin{proof}
From the identity regarding $\mathcal{L}$, we have
\begin{equation}
\begin{split}
    \left[\left[|x|^2-2\right]\right]&=\left[\left[-1\cdot\mathcal{L}(|x|^2)\right]\right]=-\left[\left[\mathcal{L}(1)|x|^2\right]\right]=0,\\
    \left[\left[x_i\right]\right]&=\left[\left[(-2)\cdot\frac{-x_i}{2}\right]\right]=\left[\left[(-2)\cdot\mathcal{L}(x_i)\right]\right]=\left[\left[\mathcal{L}(-2)x_i\right]\right]=0,\\
    \left[\left[-\frac{1}{2}x_i|x|^2\right]\right]&=\left[\left[\mathcal{L}(x_i)|x|^2\right]\right]=\left[\left[x_i\mathcal{L}(|x|^2)\right]\right]=\left[\left[x_i(2-|x|^2)\right]\right]=-\left[\left[x_i|x|^2\right]\right],
\end{split}
\end{equation}
where the last equation is equivalent to $\left[\left[ x_i|x|^2\right]\right]=0$.

We also have
\begin{equation}
    \left[\left[\mathcal{L}(|x|^2)|x|^2\right]\right]=\left[\left[ (2-|x|^2)|x|^2\right]\right].
\end{equation}
On the other hand, use integration by parts, we have
\begin{equation}
\begin{split}
    \left[\left[\mathcal{L}(|x|^2)|x|^2\right]\right]&=-\left[\left[ \langle \nabla|x|^2,\nabla|x|^2\rangle \right]\right]=-\left[\left[ |2\langle x,T\rangle|^2 \right]\right]\\
    &=-\left[\left[ 4|x|^2-4\langle x,N\rangle^2 \right]\right]=-\left[\left[ 4|x|^2-16k^2 \right]\right].
\end{split}
\end{equation}
We have $\left[\left[|x|^4-6|x|^2+16k^2\right]\right]=0$. Combining it with $\left[\left[6(|x|^2-2)\right]\right]=0$ yields the desired result.

For the last part,
\begin{equation}
\begin{split}
    \left[\left[ -\frac{1}{2}\langle x,W\rangle^2 \right]\right]&=\left[\left[ \mathcal{L}(\langle x,W\rangle)\langle x,W\rangle \right]\right]=-\left[\left[ |\nabla\langle x,W\rangle|^2\right]\right]=-\left[\left[ \langle W,T\rangle^2\right]\right].
\end{split}
\end{equation}
\end{proof}

Now, we can characterize the critical points of the $F$-functional.
\begin{thm}
If a network $\Gamma$ is balanced and satisfies $k+\frac{\langle x-x_0,N\rangle}{2t_0}=0$, it is a critical point of the $F$-functional at $(x_0,t_0)$.
\end{thm}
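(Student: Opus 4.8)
The plan is to read the first variation formula \eqref{eq:1st_var} backwards: the preceding theorem extracted the shrinker equation and the balancing condition from the vanishing of the first variation, so here I would simply verify that imposing those two conditions forces the first variation to vanish for every variation. Since we are testing whether $\Gamma$ is a critical point of $F_{x_0,t_0}$ with the basepoint $x_0$ and the scale $t_0$ held fixed, I would take the variations of $x_0$ and $t_0$ to be trivial, i.e. set $y=x_0'=0$ and $h=t_0'=0$. With this choice the entire middle term
\[
\left[h\left(\frac{|x-x_0|^2}{4t_0^2}-\frac{1}{2t_0}\right)+\frac{\langle x-x_0,y\rangle}{2t_0}\right]_{(x_0,t_0)}
\]
drops out of \eqref{eq:1st_var}, leaving only the curve integral and the junction sum.

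Next I would dispose of the curve integral. By hypothesis $\Gamma$ satisfies $k+\frac{\langle x-x_0,N\rangle}{2t_0}=0$ at every regular point, so the integrand $v\left(k+\frac{\langle x-x_0,N\rangle}{2t_0}\right)$ vanishes pointwise along each curve, independently of the normal component $v$ of the variation. Hence the term $-\left[v\left(k+\frac{\langle x-x_0,N\rangle}{2t_0}\right)\right]_{(x_0,t_0)}$ is zero for every variation vector field $V$. It then remains only to kill the multi-junction contribution: because $\Gamma$ is balanced, $\sum_j \hat{T}_i^j=0$ at each multi-junction $O_i$, so each inner product $\left\langle \sum_j\hat{T}_i^j,V(O_i)\right\rangle$ vanishes regardless of how the junction is moved, and the final sum in \eqref{eq:1st_var} vanishes as well. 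Combining the three observations gives $\partial_s(F_{(x_0,t_0)}(\Gamma_s))|_{s=0}=0$ for every variation $V$, which is exactly the statement that $\Gamma$ is a critical point of $F_{x_0,t_0}$.

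I do not expect a genuine obstacle here: the result is simply the converse of the theorem just proved, and the first variation formula \eqref{eq:1st_var} already cleanly separates the dependence on the variation into a normal part along the curves and a junction-motion part, the two being controlled respectively by the shrinker equation and the balancing condition. The only point requiring a moment of care is to confirm that \eqref{eq:1st_var} captures the full dependence of the first variation on $V$ — in particular that the tangential component of $V$ along the interior of a curve contributes nothing and that its value at the junctions is entirely accounted for by the term $\left\langle \sum_j\hat{T}_i^j,V(O_i)\right\rangle$ — so that vanishing of the two relevant expressions indeed forces the first variation to vanish for all admissible $V$. Alternatively, one could reduce to the normalized case $(x_0,t_0)=(0,1)$ via the scaling–translation identity in the earlier remark and argue there, but the direct computation at general $(x_0,t_0)$ is just as short.
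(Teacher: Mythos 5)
Your treatment of the curve integral and of the junction sum is the same as the paper's, but your decision to set $y=h=0$ removes exactly the part of the theorem that carries the content. In this paper (following Colding--Minicozzi), being a critical point of $F_{x_0,t_0}$ means being stationary under simultaneous variations of $\Gamma$, of $x_0$, and of $t_0$: that is why the first variation formula \eqref{eq:1st_var} is stated with arbitrary $y$ and $h$, and why the definition of $F$-stability lets one choose variations $x_s$, $t_s$ to accompany $\Gamma_s$. The paper's proof says explicitly that it remains to show the terms involving $h$ and $y$ vanish; after reducing to $(x_0,t_0)=(0,1)$ these are $\frac{h}{4}\left[\left[|x|^2-2\right]\right]$ and $\left\langle\left[\left[\frac{x}{2}\right]\right],y\right\rangle$, and their vanishing is the content of Lemma \ref{lem:bracket}, which in turn rests on the identities $\mathcal{L}x_i=-\frac{1}{2}x_i$ and $\mathcal{L}|x|^2=2-|x|^2$ together with the integration-by-parts lemma whose junction boundary terms cancel precisely because $\Gamma$ is balanced. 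None of this appears in your argument, and under your reading the theorem would indeed be the near-tautology you describe.

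Concretely: the nontrivial assertion is that a balanced shrinker is \emph{automatically} stationary in the $x_0$ and $t_0$ directions, a fact whose proof uses the balancing condition a second time (through the boundary terms in \eqref{integ_by_parts}), not merely to kill $\left\langle\sum_j\hat{T}_i^j,V(O_i)\right\rangle$. To complete your proof, keep $y$ and $h$ arbitrary, reduce to $(0,1)$ by the translation and scaling invariance of $F$, and invoke $\left[\left[x\right]\right]=0$ and $\left[\left[|x|^2-2\right]\right]=0$ from Lemma \ref{lem:bracket}.
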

\begin{proof}
If $\Gamma$ is the time $t_0$ slice of a shrinking solution centered at $x_0$. The term in \eqref{eq:1st_var} involving $V$ and the normal part $v$ vanishes immediately. We need to show the terms involving $h$ and $y$ also vanishes. Without loss of generality, we can assume $x_0=0$ and $t_0=1$. The term involving $h$ is
\begin{equation}
    \left[\left[-2+|x|^2\right]\right]\frac{h}{4},
\end{equation}
and the term involving $y$ is
\begin{equation}
    \left\langle\left[\left[\frac{x}{2}\right]\right],y\right\rangle.
\end{equation}
From the previous lemma, these two terms vanish. For the general $(x_0,t_0)$, similar equations are derived by scaling and translating the equations in the previous lemma.
\end{proof}

\section{Relation between Variation vector field and the normal data}
From now on, we focus on the regular networks. Assume all multi-junctions satisfy the Herring condition.

We want to use normal component to describe the variation. Choose the normal vector field $N$ on each curve $\gamma$. At a triple junction $O_i$, let $\gamma_i^j$ be a curve ending at $O_i$. Locally define $\hat{T}_i^j$ to be the tangent vector field pointing towards the curve from $O_i$. Let $\mathcal{R}$ be the linear transform in $\mathbb{R}^2$ which rotates a vector by $\frac{\pi}{2}$ in the counterclockwise direction. At each multi-junction $O_i$, define the signature $\eta_i^j=\pm1$ so that the normal vector $N_i^j=\eta_i^j\mathcal{R}(\hat{T}_i^j)$. We have
\begin{equation}
    \sum_j\eta_i^jN_i^j=\sum_j\mathcal{R}(\hat{T}_i^j)=\mathcal{R}(\sum_j\hat{T}_i^j)=0.
\end{equation}
For a variation vector field $V$ on $\Gamma$, we use lowercase $v$ to denote the normal component $\langle V, N\rangle$. At a triple junction $O_i$, we use the notation $v_i^j=\langle V,N_i^j\rangle$. It behaves like a multi-valued function at $O_i$. If $v$ is the normal data of a vector field $V$, we have $\sum_j\eta_i^jv_i^j=0$ at each multi-junction $O_i$. Similarly for any function $w$ which si continuous of the smooth part of $\Gamma$, define
\begin{equation}\nonumber
    w_i^j(O_i)=\lim_{P\in\gamma_i^j,P\to O_i}w(P),
\end{equation}
this can be think of as the value of the function seen from $\gamma_i^j$.

Now, we consider the function space $\mathfrak{V}(\Gamma)=\{v\in \mathrm{C}^1(\Gamma)|\sum_j\eta_i^jv_i^j=0\}$. For all variation vector field $V$, we have $v=\langle V,N\rangle\in\mathfrak{V}(\Gamma)$. Now, we establish the opposite: each function $v\in\mathfrak{V}(\Gamma)$ can be obtained from a variation.

\begin{prop}
For any $v\in\mathfrak{V}(\Gamma)$, there is a variation of $\Gamma$ such that the variation vector field $V$ satisfies $\langle V,N\rangle=v$.
\end{prop}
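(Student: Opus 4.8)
The plan is to construct the variation explicitly by building a vector field $V$ on $\Gamma$ realizing the prescribed normal data $v$, and then to flow along it. The core difficulty is not on the smooth interior of each curve---there we may simply take $V = vN$, and the resulting normal perturbation is automatically a legitimate variation. The real obstacle is at the triple junctions: a variation $X_s$ must send the whole network into $\mathbb{R}^2$ through \emph{embeddings}, so the three curves meeting at $O_i$ must continue to share a common endpoint after perturbation. A purely normal field $vN$ on each of the three branches will in general assign three \emph{different} displacement vectors at $O_i$ (since the $N_i^j$ point in different directions), tearing the junction apart. So the construction must reconcile the three branches into a single well-defined velocity vector $V(O_i)$.

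The key algebraic fact that makes this possible is exactly the balancing/compatibility condition $\sum_j \eta_i^j v_i^j = 0$ built into the definition of $\mathfrak{V}(\Gamma)$. First I would show that for a balanced triple junction this condition guarantees the existence of a single vector $W_i \in \mathbb{R}^2$ whose normal components along the three branches match the prescribed boundary values, i.e. $\langle W_i, N_i^j\rangle = v_i^j$ for each $j$. Concretely, the two normals $N_i^1, N_i^2$ already form a basis of $\mathbb{R}^2$ (the branches meet at $120^\circ$, so their normals are not parallel), so there is a \emph{unique} $W_i$ with $\langle W_i, N_i^1\rangle = v_i^1$ and $\langle W_i, N_i^2\rangle = v_i^2$; the content of the lemma is that the relation $\sum_j \eta_i^j v_i^j = 0$ forces the third equation $\langle W_i, N_i^3\rangle = v_i^3$ to hold automatically. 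This is a short linear-algebra verification using $\sum_j \eta_i^j N_i^j = 0$ from the displayed identity above, together with the compatibility condition.

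Having produced the junction velocities $W_i$, I would interpolate between the interior prescription $vN$ and the junction value $W_i$ by a partition-of-unity argument. On each curve $\gamma_i^j$, choose a cutoff $\chi$ that equals $1$ near the endpoint $O_i$ and is supported in a small collar, and set
\begin{equation}
    V = v N + \sum_i \chi_i\left(W_i - (\text{normal part of } W_i)\right),
\end{equation}
so that $V$ equals $W_i$ at each $O_i$ and equals $vN$ away from the junctions; more simply, define $V$ near $O_i$ to be $W_i$ times a cutoff plus $vN$ times its complement, arranging that $\langle V, N\rangle = v$ is preserved along the way since the correction is purely tangential near the junction. The point is that the tangential part of $V$ is free to be chosen, and we use that freedom to match the $W_i$ at the endpoints while leaving the normal component untouched. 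Then $X_s(p) = p + sV(p)$ (or the obvious flow of $V$) is, for $s$ small, a one-parameter family of embeddings with $X_0 = \mathrm{id}$, and its variation vector field has normal component $v$ by construction.

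I expect the main obstacle to be purely bookkeeping: verifying that the constructed $V$ is $C^1$, that $X_s$ remains an embedding for small $s$ (which holds because $X_s$ is a small $C^1$ perturbation of the identity and the junction structure is preserved), and above all confirming that matching $v_i^1, v_i^2$ by the unique $W_i$ automatically yields the correct $v_i^3$. This last step is where the hypothesis $v \in \mathfrak{V}(\Gamma)$ is essential and is the crux of the proposition; everything else is a standard cutoff-and-flow argument. The smoothness and embedding claims are routine once $V$ is defined consistently at every junction.
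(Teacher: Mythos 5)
Your proposal is correct and is essentially the paper's own argument: both add a cut-off tangential correction to $vN$ near each junction and use $\sum_j\eta_i^j v_i^j=0$ together with $\sum_j\eta_i^j N_i^j=0$ to make the three branch limits agree (your $W_i$ is exactly the paper's common value $\tfrac{2}{3}\sum_j v_i^j N_i^j$, which the paper obtains by direct computation rather than by your linear-algebra characterization). Just make sure you implement the interpolation as $V=vN+\chi\langle W_i,\hat{T}\rangle\hat{T}$ rather than $\chi W_i+(1-\chi)vN$, since only the former keeps $\langle V,N\rangle=v$ away from the junction point itself.
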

\begin{proof}
At a triple junction $O_i$, without loss of generality, we assume the curves $\gamma_i^j$ meeting at $O_i$ are labeled in the counterclockwise direction. Let $\alpha_i^j$ be a smooth cutoff function on $\gamma_i^j$ such that $\alpha_i^j(O_i)=1$ and it is supported in a small neighborhood of $O_i$.

On $\gamma_i^j$, near $O_i$ let the vector field $V$ be defined as
\begin{equation}
    V=v_i^jN_i^j+\frac{1}{\sqrt{3}}(-\eta_i^{j+1}v_i^{j+1}+\eta_i^{j+2}v_i^{j+2})\alpha_i^j\hat{T}_i^j,
\end{equation}
where the indices $j+1$, $j+2$ are module by 3. This definition is continuous on every curves. We have to check this is also continuous at the multi-junctions. At $O_i$, from $\gamma_i^j$ side, we have
\begin{equation}
\begin{split}
    V_i^j&(O_i)=v_i^jN_i^j+\frac{1}{\sqrt{3}}(-\eta_i^{j+1}v_i^{j+1}+\eta_i^{j+2}v_i^{j+2})\hat{T}_i^j\\
    =&v_i^jN_i^j+\frac{1}{\sqrt{3}}(-\eta_i^{j+1}v_i^{j+1}+\eta_i^{j+2}v_i^{j+2})\frac{1}{\sqrt{3}}(\eta_i^{j+2}N_i^{j+2}-\eta_i^{j+1}N_i^{j+1})\\
    =&v_i^jN_i^j+\frac{1}{3}(\eta_i^{j+2}v_i^{j+2}-\eta_i^{j+1}v_i^{j+1})(\eta_i^{j+2}N_i^{j+2})+\frac{1}{3}(\eta_i^{j+1}v_i^{j+1}-\eta_i^{j+2}v_i^{j+2})(\eta_i^{j+1}N_i^{j+1})\\
    =&\frac{2}{3}v_i^jN_i^j+\frac{1}{3}(\eta_i^{j+1}v_i^{j+1}+\eta_i^{j+2}v_i^{j+2})(\eta_i^{j+2}N_i^{j+2}+\eta_i^{j+1}N_i^{j+1})\\
    &+\frac{1}{3}(\eta_i^{j+2}v_i^{j+2}-\eta_i^{j+1}v_i^{j+1})(\eta_i^{j+2}N_i^{j+2})+\frac{1}{3}(\eta_i^{j+1}v_i^{j+1}-\eta_i^{j+2}v_i^{j+2})(\eta_i^{j+1}N_i^{j+1})\\
    =&\frac{2}{3}v_i^jN_i^j+\frac{2}{3}v_i^{j+1}N_i^{j+1}+\frac{2}{3}v_i^{j+2}N_i^{j+2}.\\
\end{split}
\end{equation}
This is independent of $j$, therefore, the vector field $V$ is continuous even at the triple-junctions.
\end{proof}

\begin{rmk}
The above construction only works for a balanced triple junction. For other balanced multi-junctions, we may not construct the variational vector field $V$ even if $v$ satisfies $\sum_j\eta_i^jv_i^j$ at all multi-junctions $O_i$.
\end{rmk}

\section{The second variation of $F_{x_0,t_0}$ and the operator $L_{x_0,t_0}$}\label{sec:2nd_var}

\begin{lem}
If $V=v^TT+vN$ is a variation vector field, we have 
\begin{equation}
\begin{split}
    \partial_s N&=-\nabla v-v^TkT,\\
    \partial_s T&=(\partial_T v+v^Tk)N,\\
    \partial_s k&=\Delta v +v k^2+v^T\partial_T k.\\
\end{split}
\end{equation}
\end{lem}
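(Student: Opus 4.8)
The plan is to work in a local parametrization $X(u,s)$ of the variation, where $u$ is a fixed parameter along the curve and $s$ is the variation parameter, so that $\partial_s X|_{s=0} = V = v^T T + vN$. Writing $g = |\partial_u X|^2$, the arc-length derivative is $\partial_T = g^{-1/2}\partial_u$ and $d\sigma = g^{1/2}du$. The whole computation rests on the commutation $\partial_s\partial_u X = \partial_u\partial_s X = \partial_u V$ together with the planar Frenet relations $\partial_T T = kN$ and $\partial_T N = -kT$. Since all three identities are purely local on the smooth part of a curve, the multi-junction structure of $\Gamma$ plays no role and I would prove everything on a single curve.

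First I would expand the tangential derivative of $V$. Using the Frenet relations,
\begin{equation}\nonumber
\partial_T V = (\partial_T v^T - vk)T + (\partial_T v + v^T k)N.
\end{equation}
Taking the inner product with $T$ and using $\partial_s g = 2\langle \partial_u V, \partial_u X\rangle = 2g\langle \partial_T V, T\rangle$ recovers the measure formula $\partial_s(g^{1/2}) = (\partial_T v^T - vk)g^{1/2}$, i.e. the identity $\frac{d}{ds}d\sigma = (\partial_T v^T - vk)d\sigma$ quoted earlier. Differentiating $T = g^{-1/2}\partial_u X$ then gives $\partial_s T = \partial_T V - (\partial_T v^T - vk)T$, where the tangential parts cancel, leaving $\partial_s T = (\partial_T v + v^T k)N$. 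For $\partial_s N$ I would argue algebraically: $\langle \partial_s N, N\rangle = 0$ forces $\partial_s N$ to be tangential, while differentiating $\langle T, N\rangle = 0$ gives $\langle \partial_s N, T\rangle = -\langle \partial_s T, N\rangle = -(\partial_T v + v^T k)$; since $\nabla v = (\partial_T v)T$ on a curve, this is exactly $\partial_s N = -\nabla v - v^T k T$.

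The curvature formula is where the real bookkeeping lives, and I expect it to be the main obstacle, because $k = g^{-1/2}\langle \partial_u T, N\rangle$ depends on $s$ through three separate factors. I would differentiate each in turn. The factor $\partial_s(g^{-1/2}) = -(\partial_T v^T - vk)g^{-1/2}$ contributes $-(\partial_T v^T - vk)k$; the factor $g^{-1/2}\langle \partial_u\partial_s T, N\rangle = \partial_T(\partial_T v + v^T k)$ contributes $\partial_T^2 v + k\,\partial_T v^T + v^T\partial_T k$; and the factor $g^{-1/2}\langle \partial_u T, \partial_s N\rangle = \langle kN, \partial_s N\rangle$ vanishes because $\partial_s N$ is tangential. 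The two terms containing $\partial_T v^T$ cancel, and identifying $\partial_T^2 v = \Delta v$ for the induced Laplacian on the curve yields $\partial_s k = \Delta v + vk^2 + v^T\partial_T k$.

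The only points requiring care are the consistent use of $\partial_s\partial_u = \partial_u\partial_s$, the fact that $\partial_s$ of the arc-length derivative $\partial_T$ is not zero (it carries the $\partial_s(g^{-1/2})$ term that produces the $-(\partial_T v^T - vk)k$ contribution), and the cancellation of the $\partial_T v^T$ terms in the last line. None of these involves genuine difficulty beyond careful indexing.
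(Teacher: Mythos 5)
Your argument is correct, and the first two identities are obtained essentially as in the paper (the paper gets $\partial_s N$ first by differentiating $\langle N,\partial_t\gamma\rangle=0$ and then deduces $\partial_s T$ algebraically; you go in the opposite order by differentiating $T=g^{-1/2}\partial_u X$ directly — an immaterial difference). The curvature formula is where the two proofs genuinely diverge in organization. The paper introduces the speed $m=|\partial_t\gamma|$, computes $\partial_s m=-vk+\partial_T v^T$, and then evaluates $\langle N,\partial_s\partial_t^2\gamma\rangle$ in two independent ways — once by expanding $\partial_t^2\gamma=\partial_t m\,T+m^2kN$ and once by computing $\langle N,\partial_t^2V\rangle$ in full — and solves the resulting identity for $\partial_s k$. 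You instead write $k=g^{-1/2}\langle\partial_u T,N\rangle$ and apply the product rule to the three factors, using that $\langle\partial_u T,\partial_s N\rangle=0$ because $\partial_s N$ is tangential. Your route is shorter and avoids computing the full second derivative $\partial_t^2 V$; I verified the three contributions ($-(\partial_T v^T-vk)k$, $\partial_T^2 v+k\,\partial_T v^T+v^T\partial_T k$, and $0$) and the cancellation of the $k\,\partial_T v^T$ terms, which reproduces $\partial_s k=\Delta v+vk^2+v^T\partial_T k$ exactly. Both approaches are standard first-variation computations resting on the same two facts — commutation of $\partial_s$ with $\partial_u$ and the planar Frenet relations — so nothing is lost by taking your path; what the paper's version buys is an explicit record of $\partial_s m$ and $\partial_t^2 V$, which it does not reuse elsewhere, so your streamlining is harmless.
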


\begin{proof}
First, let $\gamma_s(t)$ be a paratrization of the curve such that $t$ is the arclength for $\gamma_0(t)$. Differentiate the equation $\langle N,\partial_t\gamma\rangle=0$ with respect to $s$, we have
\begin{equation}
\begin{split}
    \langle \partial_s N,\partial_t\gamma\rangle|_{s=0}&=-\langle N,\partial_t\partial_s \gamma\rangle|_{s=0}=-\langle N,\partial_t(v^TT+v N)\rangle\\
    &=-\langle N,\partial_t v^TT+v^TkN+\partial_tv N-v kT\rangle=-\partial_t v-v^Tk.
\end{split}
\end{equation}
Therefore, $\partial_s N=-\nabla v-v^TkT$. From $\langle \partial_sN,T\rangle=-\langle N,\partial_s T\rangle$, we can deduce $\partial_sT=(\partial_T v+v^Tk)N$.

Let $m$ be the speed of the curve $\gamma_s(t)$, $\partial_t \gamma=mT$. Differentiate the equation $m^2=|\partial_t \gamma|^2$ with respect to $s$ yields
\begin{equation}
\begin{split}
    2m\partial_s m&=\partial_s(|\partial_t\gamma|^2)=2\langle \partial_t\gamma,\partial_t(v N+v^TT)\rangle\\
    &=2\langle mT,v(-kmT)+\partial_tvN+\partial_tv^TT+v^TkmN\rangle.
\end{split}
\end{equation}
Therefore, at $s=0$, we have $m=1$ and $\partial_s m=-v k+\partial_T v^T$. Now, from $\partial_t^2\gamma=\partial_t mT+m^2kN$,
\begin{equation}
\begin{split}
    \langle N,\partial_s\partial_t^2\gamma\rangle|_{s=0}&=\langle N,\partial_s(\partial_t mT+m^2kN)\rangle|_{s=0}\\
    &=\langle N,\partial_s\partial_t mT+\partial_t m\partial_s T+(2m\partial_smk+m^2\partial_sk)N+m^2k\partial_sN\rangle|_{s=0}\\
    &=2(-v k+\partial_T v^T)k+\partial_sk.
\end{split}
\end{equation}
Note that at $s=0$, the variation vector field is $\partial_s\gamma=V=vN+v^TT$.
\begin{equation}
\begin{split}
    \partial_t V=&\partial_t vN-vkT+\partial_t v^TT+v^TkN=(-vk+\partial_T v^T)T+(\partial_T v+v^Tk)N,\\
    \partial_t^2 V=&(-\partial_Tvk-v\partial_Tk+\Delta v^T)T+(\Delta v+\partial_T v^Tk+v^T\partial_T k)N\\
    &+(-vk+\partial_T v^T)kN-(\partial_T v+v^Tk)kT.
\end{split}
\end{equation}
We also have
\begin{equation}
\begin{split}
    \langle N,\partial_s\partial_t^2\gamma\rangle|_{s=0}&=\langle N,\partial_t^2 V\rangle=\Delta v+\partial_T v^Tk+v^T\partial_T k-vk^2+\partial_T v^Tk.
\end{split}
\end{equation}
Therefore,
\begin{equation}
    2(-v k+\partial_T v^T)k+\partial_sk=\Delta v -v k^2+2\partial_T v^Tk+v^T\partial_T k
\end{equation}
and we obtain the desired result.
\end{proof}

\begin{rmk}
In the special case that $V=vN$ is a normal variation vector field, we have 
\begin{equation}
\begin{split}
    \partial_s N&=-\nabla v,\\
    \partial_s k&=\Delta v+vk^2.\\
\end{split}
\end{equation}
\end{rmk}
If $\Gamma$ is the $-t_0$ time slice of a self-similarly shrinking solution centered at $x_0$, then $\frac{(\Gamma-x_0)}{\sqrt{t_0}}$ is the $-1$ time slice of a self-similarly shrinking solution centered at the origin. Without loss of generality, we only need to consider the case that $\Gamma$ is a regular shrinker.

\begin{thm}[Second variation formula]
Let $\Gamma$ be a regular shrinker. $\Gamma_s$, $x_s$, $t_s$, $s=(s_1,s_2)$ are a two-parameter variation with $\Gamma_0=\Gamma$, $x_0=0$, $t_0=1$. The variation directions are given by $\partial_{s_1}\Gamma_s|_{s=0}=V$, $\partial_{s_1}x_s|_{s=0}=y$, $\partial_{s_1}t_s|_{t=0}=h$, and $\partial_{s_2}\Gamma_s|_{s=0}=W$, $\partial_{s_2}x_s|_{s=0}=z$, $\partial_{s_2}t_s|_{t=0}=l$. We have
\begin{equation}\label{eq:2nd_var}
\begin{split}
    \partial_{s_2}\partial_{s_1}&(F_{(x_s,t_s)}(\Gamma_s))|_{s=0}\\
    =&\left[\left[-vLw+\frac{v\langle z,N\rangle+w\langle y,N\rangle}{2}-(vl+wh)k\right]\right]\\
    &+\left[\left[ -hlk^2-\frac{\langle y,z\rangle}{2}+\frac{\langle x,y\rangle\langle x,z\rangle}{4}\right]\right]-\frac{1}{\sqrt{4\pi}}\sum_{i,j}v_i^j\left\langle \nabla\left(we^{-\frac{|x|^2}{4}}\right)_i^j ,\hat{T}_i^j\right\rangle.
\end{split}
\end{equation}
where the lower case $v$, $w$, denotes $\langle V,n\rangle$, $\langle W,n\rangle$ respectively, $Lw=\mathcal{L}w+k^2w+\frac{w}{2}$.
\end{thm}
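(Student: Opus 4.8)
The plan is to differentiate the first variation formula \eqref{eq:1st_var} a second time. I regard $\partial_{s_1}(F_{(x_s,t_s)}(\Gamma_s))$ as a function of $s_2$ along the one-parameter family $\Gamma_{(0,s_2)}$, carrying the moving data $x_{s_2}$, $t_{s_2}$ and the $s_1$-data $y_{s_2}=\partial_{s_1}x_s$, $h_{s_2}=\partial_{s_1}t_s$, and then evaluate $\partial_{s_2}$ at $s=0$. The computation splits along the three groups of terms in \eqref{eq:1st_var}: the interior term carrying the normal speed $v$, the ambient term carrying $h$ and $y$, and the junction sum. The ingredients are the evolution formulas $\partial_{s_2}N=-\nabla w$ and $\partial_{s_2}k=\Delta w+k^2w$ for the normal variation $W=wN$ derived above, the shrinker equation $k+\tfrac{\langle x,N\rangle}{2}=0$, and the bracket identities of Lemma \ref{lem:bracket}.

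The interior term is the most transparent. Since $\Gamma$ is a regular shrinker, the factor $k+\tfrac{\langle x-x_s,N\rangle}{2t_s}$ vanishes identically at $s=0$; hence in the product rule only its own $s_2$-derivative contributes, while the variations of the measure $d\sigma$, of the weight $\Phi_{(x_s,t_s)}$, and of $v$ all multiply a vanishing factor and drop out, and the endpoint contribution produced by differentiating the integral over the moving network likewise vanishes. Differentiating the factor with $\partial_{s_2}x=wN$, $\partial_{s_2}x_s=z$, $\partial_{s_2}N=-\nabla w$, $\partial_{s_2}t_s=l$, then recognizing $\Delta w-\tfrac12\langle x,\nabla w\rangle=\mathcal L w$ and using $k=-\tfrac{\langle x,N\rangle}{2}$, collapses it to $Lw-\tfrac{\langle z,N\rangle}{2}+kl$. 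This gives the contribution $[[-vLw+\tfrac{v\langle z,N\rangle}{2}-vkl]]$.

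The ambient term is where the algebra concentrates. Differentiating the bracket $[\,h_{s_2}(\tfrac{|x-x_s|^2}{4t_s^2}-\tfrac{1}{2t_s})+\tfrac{\langle x-x_s,y_{s_2}\rangle}{2t_s}\,]$ in $s_2$ produces three kinds of terms. First, the terms carrying the mixed second derivatives $\partial_{s_2}h_{s_2}$ and $\partial_{s_2}y_{s_2}$ are multiplied by $[[\tfrac{|x|^2}{4}-\tfrac12]]$ and $[[\tfrac{x}{2}]]$, both of which vanish by Lemma \ref{lem:bracket}; this is the mechanism by which the second variation depends only on the first-order data $y,h,z,l$. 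Second, the derivatives of the integrand, weight and measure against the network motion $W$ (via $\partial_{s_2}x=wN$ and $\partial_{s_2}d\sigma=-wk\,d\sigma$) reassemble, after substituting $k=-\tfrac{\langle x,N\rangle}{2}$, into $\tfrac{w\langle y,N\rangle}{2}-whk$, which completes the symmetric first bracket $[[\tfrac{v\langle z,N\rangle+w\langle y,N\rangle}{2}-(vl+wh)k]]$. Third, the pieces carrying $z$ and $l$, after repeated use of the quadratic identities $[[|x|^4-12+16k^2]]=0$ and $[[\langle x,W\rangle^2]]=[[2\langle W,T\rangle^2]]$ of Lemma \ref{lem:bracket}, collapse to $[[-hlk^2-\tfrac{\langle y,z\rangle}{2}+\tfrac{\langle x,y\rangle\langle x,z\rangle}{4}]]$.

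The junction sum is the main obstacle, and the part with no analogue in the closed self-shrinker computation of \cite{CM}. Differentiating $-\tfrac1{\sqrt{4\pi t_s}}\sum_i\langle\sum_j\hat T_i^j,V(O_i)\rangle e^{-|O_i-x_s|^2/4t_s}$ in $s_2$, I would use crucially that $\Gamma$ is balanced, so that $\sum_j\hat T_i^j=0$ at $s=0$: the derivatives of the weight and of $V(O_i)$ are then annihilated, and only $\partial_{s_2}\sum_j\hat T_i^j$ survives. The delicate step is to compute $\partial_{s_2}\hat T_i^j$, the infinitesimal rotation of the tangent direction at the moving triple junction, where one must track the tangential component that the variation vector field realizing the normal datum $w$ necessarily carries near each junction (as in the construction of a variation from its normal data above), together with the signs $\eta_i^j$ relating $\hat T_i^j$ and $N_i^j$. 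Carrying the weight $e^{-|x|^2/4}$ through this differentiation is what upgrades $\nabla w$ to the full gradient and yields the stated term $-\tfrac1{\sqrt{4\pi}}\sum_{i,j}v_i^j\langle\nabla(we^{-|x|^2/4})_i^j,\hat T_i^j\rangle$. I expect the sign bookkeeping here, and the consistency check that the total expression is symmetric under exchanging the two variations (as it must be, being a mixed partial, with the asymmetry of $-[[vLw]]$ compensated precisely by this weighted junction term), to be the most error-prone part of the argument.
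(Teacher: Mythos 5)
Your overall strategy is the paper's: differentiate the first variation formula in $s_2$, use the shrinker equation to kill everything except the derivative of the factor $k+\tfrac{\langle x-x_s,N\rangle}{2t_s}$ in the interior term, use the bracket identities of Lemma \ref{lem:bracket} to collapse the ambient term, and use balance to reduce the junction sum to $\partial_{s_2}\sum_j\hat{T}_i^j$. Your treatment of the first two groups matches the paper's proof, and your explicit remark that the mixed second derivatives $\partial_{s_2}\partial_{s_1}x_s$ and $\partial_{s_2}\partial_{s_1}t_s$ are annihilated by $[[x]]=0$ and $[[|x|^2-2]]=0$ is correct and is actually left implicit in the paper. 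One caveat on the interior term: you take $\partial_{s_2}x=wN$ and $\partial_{s_2}k=\Delta w+k^2w$ as if $W$ were purely normal, but a variation realizing a prescribed normal datum at a triple junction necessarily carries a tangential component $w^T$ near the junctions; the paper keeps $w^T$ and shows its contributions cancel because $\partial_T\bigl(k+\tfrac{\langle x,N\rangle}{2}\bigr)=0$ (and, in the ambient term, because a divergence term vanishes by balance). You need that cancellation, not the assumption $w^T=0$.

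The genuine gap is the junction term, which you correctly identify as the crux but only describe in the conditional. Two concrete pieces are missing. First, the actual derivative $\partial_{s_2}\hat{T}_i^j=\bigl((\partial_{\hat{T}}w)_i^j+k_i^j\langle W,\hat{T}_i^j\rangle\bigr)N_i^j$, which follows from the evolution formula $\partial_sT=(\partial_Tv+v^Tk)N$; note this still involves $\langle W,\hat{T}_i^j\rangle$, i.e.\ the tangential part of $W$ at the junction, so the expression does not yet depend only on the normal data $w$. Second, and decisively, the identity that removes this dependence: pairing the constraint $\sum_j\eta_i^jv_i^j=0$ with the scalar $\tfrac{1}{2}\langle\mathcal{R}(O_i),W\rangle$ and expanding both $\mathcal{R}(O_i)$ and $W$ in the frame $\{N_i^j,\hat{T}_i^j\}$ yields
\begin{equation*}
0=\sum_j v_i^j\left(\frac{\langle O_i,\hat{T}_i^j\rangle\,w_i^j}{2}-k_i^j\langle W,\hat{T}_i^j\rangle\right),
\end{equation*}
which is exactly what trades $k_i^j\langle W,\hat{T}_i^j\rangle$ for $-\tfrac{1}{2}\langle O_i,\hat{T}_i^j\rangle w_i^j$ and assembles $(\partial_{\hat{T}}w)_i^j-\tfrac{1}{2}\langle O_i,\hat{T}_i^j\rangle w_i^j$ into $e^{\frac{|O_i|^2}{4}}\langle\nabla(we^{-\frac{|x|^2}{4}})_i^j,\hat{T}_i^j\rangle$. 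Without this step the stated junction term is not obtained, and the symmetry check you invoke as a consistency test cannot be run. Supplying these two computations would complete the argument along the paper's own lines.
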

\begin{rmk}
The terms other than the multi-junction term are consistent with Colding and Minicozzi's work \cite{CM}.
\end{rmk}

\begin{proof}
From the first variation formula, $\partial_{s_1}\left(F_{(x_s,t_s)}(\Gamma_s)\right)$ is
\begin{equation}
\begin{split}
    &-\left[v\left(k+\frac{\langle x-x_s,N\rangle}{2t_s}\right)\right]_{(x_s,t_s)}+\left[h\left(-\frac{1}{2t_s}+\frac{|x-x_s|^2}{4t_s^2}\right)+\frac{\langle x-x_s,y\rangle}{2t_s}\right]_{(x_s,t_s)}\\
    &-\frac{1}{\sqrt{4\pi t_s}}\sum_i \left\langle  \sum_j\hat{T}_i^j,V\right\rangle e^{-\frac{|x-x_s|^2}{4t_s}}.
\end{split}
\end{equation}
First noted that each term has a factor of $\frac{1}{\sqrt{4\pi t_s}}$. If the differentiation applies to this term, it will result in a term that is multiple for the first variation. This term vanishes since every regular shrinker $\Gamma$ is a critical point of $F$-functional at $(0,1)$. We will deal with each term in the first variation separately.

For the first term, if the differentiation acts on terms other than $k+\frac{\langle x-x_s,N\rangle}{2t_s}$, this part will remain in the result and yields 0. We only need to differentiate $k+\frac{\langle x-x_s,N\rangle}{2t_s}$ with respect to $s_2$. Using lemma \ref{lem:normal}, we obtain
\begin{equation}
\begin{split}
    -\partial_{s_2}&\left[v\left(k+\frac{\langle x-x_s,N\rangle}{2t_s}\right)\right]_{(x_s,t_s)}\bigg|_{s=0}=-\left[v\partial_{s_2}\left(k+\frac{\langle x-x_s,N\rangle}{2t_s}\right)\right]_{(x_s,t_s)}\bigg|_{s=0}\\
    &=-\left[\left[ v\left(\Delta w+w k^2+w^T\partial_Tk+\frac{\langle wN-z,N\rangle}{2}-\frac{l\langle x,N\rangle}{2}-\frac{\langle x,\nabla w+w^TkT\rangle}{2}\right)\right]\right]\\
    &=-\left[\left[ v\left(Lw-\frac{\langle z,N\rangle}{2}+lk\right)\right]\right]-\left[\left[ vw^T\left(\partial_Tk-\frac{\langle x,kT\rangle}{2}\right)\right]\right].\\
\end{split}
\end{equation}
Note that since $\partial_Tk-\frac{\langle x,kT\rangle}{2}=\partial_T(k+\frac{\langle x,N\rangle}{2})=0$, the term involve tangential differentiation vanishes.

For the second term,
\begin{equation}
\begin{split}
    \partial_{s_2}&\left(h\left(-\frac{1}{2t_s}+\frac{|x-x_s|^2}{4t_s^2}\right)+\frac{\langle x-x_s,y\rangle}{2t_s}\right)\bigg|_{s=0}\\
    &=h\left(l\left(\frac{1}{2}-\frac{|x|^2}{2}\right)+\frac{\langle x, wN+w^TT-z\rangle}{2}\right)-\frac{\langle x,y\rangle}{2}l+\frac{\langle wN+w^TT-z,y\rangle}{2}.
\end{split}
\end{equation}
Since $\int_\Gamma (|x|^2-2)e^{-\frac{|x|^2}{4}}d\sigma=0$, $\int_\Gamma xe^{-\frac{|x|^2}{4}}d\sigma=0$, and $k+\frac{\langle x,N\rangle}{2}=0$, the contribution of this part is
\begin{equation}
\begin{split}
    \frac{1}{\sqrt{4\pi}}\int_\Gamma \left(-\frac{hl}{2}-khw+\frac{w\langle N,y\rangle-\langle y,z\rangle}{2}+w^T\left(h\frac{\langle x,T\rangle}{2}+\frac{\langle T,y\rangle}{2}\right)\right)e^{-\frac{|x|^2}{4}}d\sigma.
\end{split}
\end{equation}
If the differentiation applies to $e^{-\frac{|x-x_s|^2}{4t_s}}d\sigma$, we have
\begin{equation}
\begin{split}
    \partial_{s_2}&\left(e^{-\frac{|x-x_s|^2}{4t_s}}d\sigma\right)\bigg|_{s=0}=\left(\frac{|x|^2}{4}l-\frac{\langle x,wN+w^TT-z\rangle}{2}-kw+\partial_Tw^T\right)e^{-\frac{|x|^2}{4}}d\sigma\\
    &=\left(\frac{|x|^2}{4}l+\frac{\langle x,z\rangle}{2}-\left(k+\frac{\langle x,N\rangle}{2}\right)w+\partial_Tw^T-w^T\frac{\langle x,T\rangle}{2}\right)e^{-\frac{|x|^2}{4}}d\sigma\\
    &=\left(\frac{|x|^2}{4}l+\frac{\langle x,z\rangle}{2}+\partial_Tw^T-w^T\frac{\langle x,T\rangle}{2}\right)e^{-\frac{|x|^2}{4}}d\sigma.\\
\end{split}
\end{equation}
 Combine the terms together, the second term is 
\begin{equation}
\begin{split}
    &\left[\left[-\frac{hl}{2}-khw+\frac{w\langle N,y\rangle-\langle y,z\rangle}{2}\right]\right]+\left[\left[w^T\left(h\frac{\langle x,T\rangle}{2}+\frac{\langle T,y\rangle}{2}\right)\right]\right]\\
    &+\left[\left[\left(h\left(-\frac{1}{2}+\frac{|x|^2}{4}\right)+\frac{\langle x,y\rangle}{2}\right)\left(\frac{|x|^2}{4}l+\frac{\langle x,z\rangle}{2}+\partial_Tw^T-w^T\frac{\langle x,T\rangle}{2}\right)\right]\right]\\
    =&\left[\left[hl\left(\frac{|x|^4}{16}-\frac{3}{4}\right)-khw+\frac{w\langle N,y\rangle-\langle y,z\rangle}{2}+\frac{\langle x,y\rangle\langle x,z\rangle}{4}\right]\right]\\
    &+\left[\left[\partial_T\left(w^T\left(h\frac{\langle x,T\rangle}{2}+\frac{\langle T,y\rangle}{2}\right)\right)-w^T\left(h\left(-\frac{1}{2}+\frac{|x|^2}{4}\right)+\frac{\langle x,y\rangle}{2}\right)\frac{\langle x,T\rangle}{2}\right]\right],\\
\end{split}
\end{equation}
where we use $[[x]]=[[x|x|^2]]=0$ and $[[|x|^2-2]]=0$ to simplify the equation. Use divergent theorem on the second term yields
\begin{equation}
    -\sum_i\sum_j\left\langle\hat{T}_i^j,W\right\rangle\left[h\left(-\frac{1}{2}+\frac{|O_i|^2}{4}\right)+\frac{\langle O_i, y\rangle}{2}\right]e^{-\frac{|O_i|^2}{4}}.
\end{equation}
This term vanishes since $\Gamma$ is regular, $\sum_j\langle\hat{T}_i^j,W(O_i)\rangle=0$ for all $i$.

For the third term, we need the differentiation apply to $\sum_j\hat{T}_i^j$, otherwise this term remains and it vanishes when $s=0$.
\begin{equation}
    \partial_{s_2}\left(\sum_j \hat{T}_i^j\right)\bigg|_{s=0}=\sum_j\left((\partial_{\hat{T}}w)_i^j+k_i^j\langle W,\hat{T}_i^j\rangle\right)N_i^j.
\end{equation}
The third term becomes
\begin{equation}
-\partial_{s_2}\sum_i \left\langle  \sum_j\hat{T}_i^j,V(O_i)\right\rangle e^{-\frac{|O_i-x_s|^2}{4t_s}}\bigg|_{s=0}=-\sum_{i,j}\left((\partial_{\hat{T}} w)_i^j+k_i^j\left\langle W,\hat{T}_i^j\right\rangle\right)v_i^j e^{-\frac{|O_i|^2}{4}}.
\end{equation}
At any multi-junction $O_i$, since $\sum_j\eta_i^jv_i^j=0$, we have
\begin{equation}
\begin{split}
    0&=\sum_j\eta_i^jv_i^j\frac{\langle\mathcal{R}(O_i),W\rangle}{2}=\sum_j\eta_i^jv_i^j\frac{\langle \mathcal{R}(O_i),N_i^j\rangle\langle W,N_i^j\rangle+\mathcal{R}(O_i),\hat{T}_i^j\rangle\langle W,\hat{T}_i^j\rangle}{2}\\
    &=\sum_j\eta_i^jv_i^j\frac{\langle \mathcal{R}(O_i),\eta_i^j\mathcal{R}(\hat{T}_i^j)\rangle\langle W,N_i^j\rangle+\langle\mathcal{R}(O_i),-\eta_i^j\mathcal{R}(N_i^j)\rangle\langle W,\hat{T}_i^j\rangle}{2}\\
    &=\sum_jv_i^j\frac{\langle O_i,\hat{T}_i^j\rangle\langle W,N_i^j\rangle-\langle O_i,N_i^j\rangle\langle W,\hat{T}_i^j\rangle}{2}=\sum_jv_i^j\left(\frac{\langle O_i,\hat{T}_i^j\rangle w_i^j}{2}-k_i^j\langle W,\hat{T}_i^j\rangle\right).\\
\end{split}
\end{equation}
Therefore, 
\begin{equation}
\begin{split}
-\partial_{s_2}&\sum_i \left\langle  \sum_j\hat{T}_i^j,V\right\rangle e^{-\frac{|x-x_s|^2}{4t_s}}\bigg|_{s=0}=-\sum_{i,j}\left((\partial_{\hat{T}} w)_i^j+k_i^j\left\langle W,\hat{T}_i^j\right\rangle\right)v_i^j e^{-\frac{|O_i|^2}{4}}\\
    &=-\sum_{i,j}\left(\partial_{\hat{T}} w_i^j-\frac{\langle O_i, \hat{T}_i^j\rangle}{2} w_i^j\right)v_i^j e^{-\frac{|O_i|^2}{4}}=-\sum_{i,j}v_i^j\left\langle \nabla\left(we^{-\frac{|x|^2}{4}}\right)_i^j ,\hat{T}_i^j\right\rangle.
\end{split}
\end{equation}
\end{proof}

Since the bilinear form arises from the second derivative, is has the following property.  
\begin{lem}
The second variation is symmetric.
\end{lem}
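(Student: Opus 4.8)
The plan is to deduce the symmetry from the equality of mixed partial derivatives rather than to verify \eqref{eq:2nd_var} term by term. The key observation is that the right-hand side of \eqref{eq:2nd_var} is, by construction, the mixed second derivative $\partial_{s_2}\partial_{s_1}\big(F_{(x_s,t_s)}(\Gamma_s)\big)\big|_{s=0}$ of a single scalar function of the two parameters $(s_1,s_2)$. Since $(s_1,s_2)\mapsto\Gamma_s$ is a smooth two-parameter family of embeddings and $(s_1,s_2)\mapsto(x_s,t_s)$ is smooth, the quantity $F_{(x_s,t_s)}(\Gamma_s)$ is a $C^2$ (indeed smooth) function on a neighborhood of the origin in $\mathbb{R}^2$. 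By Schwarz's theorem its mixed partials commute, so $\partial_{s_2}\partial_{s_1}=\partial_{s_1}\partial_{s_2}$ at $s=0$.

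First I would make precise that the bilinear form defined by \eqref{eq:2nd_var} depends only on the two sets of directional data $(v,y,h)$ and $(w,z,l)$, and not on the higher-order behaviour of the families. This is where the hypothesis that $\Gamma$ is a regular shrinker enters: because $\Gamma$ is a critical point of $F$ at $(0,1)$ (established earlier), the first variation vanishes identically, so every term in the computation that is proportional to the first variation drops out. In particular the ``acceleration'' contributions coming from $\partial_{s_2}\partial_{s_1}\Gamma_s$, $\partial_{s_2}\partial_{s_1}x_s$, $\partial_{s_2}\partial_{s_1}t_s$ — as well as the terms where $\partial_{s_2}$ hits $v$, $y$, $h$ directly, which are multiplied by $k+\tfrac{\langle x,N\rangle}{2}=0$ or by the vanishing $y$- and $h$-coefficients of the first variation — all disappear at $s=0$. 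Hence \eqref{eq:2nd_var} is the Hessian of $F$ at the critical point $\Gamma$, and a Hessian at a critical point is path-independent and bilinear in the tangent directions.

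With this in hand, computing $\partial_{s_1}\partial_{s_2}$ instead of $\partial_{s_2}\partial_{s_1}$ simply interchanges the roles of the two variations, i.e.\ it produces \eqref{eq:2nd_var} with $(v,y,h)$ and $(w,z,l)$ exchanged. Equating the two orders of differentiation via Schwarz then yields exactly the asserted symmetry of the bilinear form. No further computation is required once path-independence at the critical point is in place.

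The step I expect to be the main obstacle — and the reason a direct term-by-term check is unattractive — is the multi-junction term $-\tfrac{1}{\sqrt{4\pi}}\sum_{i,j}v_i^j\langle\nabla(we^{-|x|^2/4})_i^j,\hat{T}_i^j\rangle$, which is visibly not symmetric in $v$ and $w$ on its own. The bulk term $[[-vLw]]$ is likewise not symmetric in isolation, because $v,w\in\mathfrak{V}(\Gamma)$ are multivalued at the junctions and are not restrictions of ambient $C^2$ functions, so the symmetry of $\mathcal{L}$ — which required a single-valued $\bar{\nabla}w$ and balancedness to kill the boundary terms — does not apply directly. Symmetry is restored only through the cancellation between the boundary contribution generated when $[[-vLw]]$ is integrated by parts and the explicit junction term. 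The Schwarz argument packages precisely this cancellation automatically, which is why I would route the proof through equality of mixed partials rather than attempt to match these boundary terms by hand.
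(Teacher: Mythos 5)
Your proof is correct, but it takes a genuinely different route from the paper. You deduce symmetry abstractly from Schwarz's theorem: the right-hand side of \eqref{eq:2nd_var} is the mixed partial $\partial_{s_2}\partial_{s_1}F_{(x_s,t_s)}(\Gamma_s)|_{s=0}$ of a smooth two-parameter function, it depends only on the first-order data $(v,y,h)$, $(w,z,l)$ (precisely because the first variation vanishes at the critical point $\Gamma$, so all second-order data of the family drops out), and computing the derivatives in the other order yields the same formula with the two triples exchanged. This is clean and requires no computation, granted two small points you should make explicit: that any pair of admissible data is realized by an actual two-parameter family (e.g.\ $X_s=\mathrm{id}+s_1V+s_2W$, using the earlier proposition that every $v\in\mathfrak{V}(\Gamma)$ comes from a vector field), and that the Gaussian-weighted integrals converge and admit differentiation under the integral sign so that $F_{(x_s,t_s)}(\Gamma_s)$ really is $C^2$ in $(s_1,s_2)$. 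The paper instead verifies symmetry by hand: it integrates $-\int_\Gamma v\mathcal{L}w\,e^{-|x|^2/4}d\sigma$ by parts on each curve and shows the resulting boundary terms exactly cancel against the junction term, leaving the manifestly symmetric expression $\int_\Gamma\langle\nabla v,\nabla w\rangle e^{-|x|^2/4}d\sigma$. The trade-off is that the paper's computation does more than prove symmetry: it produces the explicit Dirichlet-form representation \eqref{eq:bilinear_form} of $[v,w]$, which is what is actually used in the later sections (Lemma \ref{lem:const}, the energy functional $E$, and the curve-by-curve estimates for the stars, fish, and rocket). Your argument establishes the qualitative fact but would still leave that identity to be derived separately; it does, however, correctly pinpoint that the cancellation between the bulk and junction terms is the crux, and packages it automatically.
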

\begin{proof}
Since $\sum_{i,j}v_i^j\langle \nabla(we^{-\frac{|x|^2}{4}})_i^j ,\hat{T}_i^j\rangle=\sum_{i,j}((\partial_{\hat{T}} w)_i^j-\frac{1}{2}\langle O_i, \hat{T}_i^j\rangle w_i^j)v_i^j e^{-\frac{|O_i|^2}{4}}$, the terms which appear to be asymmetric is $\int_{\Gamma} -v\mathcal{L}we^{-\frac{|x|^2}{4}}d\sigma-\sum_{i,j} v_i^j(\partial_{\hat{T}}w)_i^j  e^{-\frac{|O_i|^2}{4}}$. We can use integration by parts on each curve $\gamma$ with boundary $O_{i_1}$ and $O_{i_2}$. From the definition of the drift laplacian $\mathcal{L}$, $\mathcal{L}g=e^\frac{|x|^2}{4}\mathbf{div}(e^{-\frac{|x|^2}{4}}\nabla g)$, we have
\begin{equation}
\begin{split}
    -\int_{\gamma}v\mathcal{L}we^{-\frac{|x|^2}{4}}d\sigma&=-\int_{\gamma}\mathbf{div}\left(ve^{-\frac{|x|^2}{4}}\nabla w\right)d\sigma+\int_{\gamma} \langle\nabla v,\nabla w\rangle e^{-\frac{|x|^2}{4}}d\sigma\\
    &=ve^{-\frac{|x|^2}{4}}\partial_{\hat{T}} w(O_{i_1})+ve^{-\frac{|x|^2}{4}}\partial_{\hat{T}} w(O_{i_2})+\int_{\gamma} \langle\nabla v,\nabla w\rangle e^{-\frac{|x|^2}{4}}d\sigma
\end{split}
\end{equation}
If we sum the equation over all curves $\gamma$, we obtain
\begin{equation}
\begin{split}
    -\int_{\Gamma}&v\mathcal{L}we^{-\frac{|x|^2}{4}}d\sigma-\sum_{i,j}v_i^j\left(\partial_{\hat{T}}w\right)_i^j e^{-\frac{|O_i|^2}{4}}=\int_{\Gamma} \langle\nabla v,\nabla w\rangle e^{-\frac{|x|^2}{4}}d\sigma.
\end{split}
\end{equation}
Therefore, the second variation formula is symmetric.
\end{proof}

\section{The eigenfunction problem from the 2nd variation formula}\label{sec:eigunfunction}
From the second variation formula, we only need the normal data for a variation. Hence, we consider the variations described by the function space $\mathfrak{V}(\Gamma)=\{v\in \mathrm{C}^1(\Gamma)|\sum_j\eta_i^jv_i^j=0\}$. Now, we focus on the bilinear form induced by the second variation formula, let $(v,y,h)$, $(w,z,l)$ be two variations, define
\begin{equation}
\begin{split}
    [(v,y,h)&,(w,z,l)]=\left[\left[ -vLw+\frac{v\langle z,n\rangle+w\langle y,n\rangle}{2}-(vl+wh)k\right]\right]\\
    &+\left[\left[-hlk^2-\frac{\langle y,z\rangle}{2}+\frac{\langle x,y\rangle\langle x,z\rangle}{4}\right]\right]-\frac{1}{\sqrt{4\pi}}\sum_{i,j}v_i^j\left\langle\nabla\left(we^{-\frac{|x|^2}{4}}\right)_i^j,\hat{T}_i^j\right\rangle.
\end{split}
\end{equation}
Also, we consider the variation on the network only.
\begin{equation}\label{eq:bilinear_form}
\begin{split}
    [v,w]&=\left[\left[ -vLw\right]\right]-\frac{1}{\sqrt{4\pi}}\sum_{i,j} v_i^j\left\langle\nabla\left(we^{-\frac{|x|^2}{4}}\right)_i^j,\hat{T}_i^j\right\rangle\\
    &=\left[\left[\left\langle \nabla v,\nabla w\right\rangle-\left(k^2+\frac{1}{2}\right)vw\right]\right]+\frac{1}{2\sqrt{4\pi}}\sum_{i,j} v_i^jw_i^j\left\langle O_i,\hat{T}_i^j\right\rangle e^{-\frac{|O_i|^2}{4}}.
\end{split}
\end{equation}

On a balanced shrinker $\Gamma$, let the weighted Sobolev space $\hat{W}^{1,2}=\{v|\sum_j\sigma_i^jv_i^j=0$ for all multi-junction $O_i$, $\int_\Gamma (v^2+|\nabla v|^2)e^{-\frac{|x|^2}{4}}d\sigma<\infty\}$.

Now, define the energy functional on $\hat{W}^{1,2}$ by
\begin{equation}
    E(v)=\frac{\sqrt{4\pi}[v,v]}{\int_\Gamma v^2e^{-\frac{|x|^2}{4}}d\sigma}.
\end{equation}

Consider the critical points of $E(v)$. Since $[\cdot,\cdot]$ is symmetric, when $0=\frac{d}{dt}E(v+tw)|_{t=0}$, we have
\begin{equation}
    0=2\left(\sqrt{4\pi}[v,w]-E(v)\int_\Gamma vwe^{-\frac{|x|^2}{4}}d\sigma\right).
\end{equation}
Therefore, if $v$ is a critical point of $E(v)$, 
\begin{equation}
\begin{split}
    0&=\int_\Gamma w\left(-Lv-E(v)v\right)e^{-\frac{|x|^2}{4}}d\sigma-\sum_{i,j} w_i^j\left\langle\nabla\left(ve^{-\frac{|x|^2}{4}}\right)_i^j,\hat{T}_i^j\right\rangle.
\end{split}
\end{equation}

Since $w\in\hat{W}^{1,2}$ is arbitrary, choose $w$ to be supported away from the multi-points. We can deduce
\begin{equation}
    -Lv=E(v)v
\end{equation}
on each curve. It means that $v$ is an eigenfunction of $-L$ with eigenvalue $E(v)$.

The term involving the multi-point $O_i$ is $\sum_j w_i^j\langle\nabla (ve^{-\frac{|x|^2}{4}})_i^j,\hat{T}_i^j\rangle$. Since there are three curves meeting at a multi-point, we can deduce that for all $j$, $\eta_i^j\langle\nabla (ve^{-\frac{|x|^2}{4}})_i^j,\hat{T}_i^j\rangle$ are the same. We obtain an eigenfunction problem

\begin{equation}\label{eigenprob}
\displaystyle \begin{cases} 
-Lv=\lambda v, & \text{on $\gamma_i$'s} \\  
\sum_j \eta_i^j v_i^j=0, & \text{at $O_i$'s} \\  
\eta_i^j\left\langle\nabla \left(ve^{-\frac{|x|^2}{4}}\right)_i^j,\hat{T}_i^j\right\rangle \text{ are the same for all $j$}. & \text{at any $O_i$}  
 \end{cases}
\end{equation}

Some of the eigenfunctions of $L$ play important geometric roles, as described in the following theorem. 
\begin{thm}
The function $k$ is an eigenfunction of $L$. For each $V\in\mathbb{R}^2$, $\langle V,N\rangle$ is an eigenfunction of $L$. They satisfy
\begin{equation}
\begin{split}
    Lk&=k,\\
    L\langle V,N\rangle&=\frac{1}{2}\langle V,N\rangle.\\
\end{split}
\end{equation}
\end{thm}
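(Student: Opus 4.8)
The plan is to verify both identities by a direct pointwise computation on each smooth arc of $\Gamma$. Observe first that $Lw = \mathcal{L}w + k^2 w + \tfrac12 w$ is a purely local operator along each curve, so neither statement involves the multi-junctions; it suffices to check the identities at regular points, where the shrinker equation $k = -\tfrac12\langle x, N\rangle$ is available, together with the Frenet relations $\nabla_T T = kN$ and $\nabla_T N = -kT$ (the latter obtained by differentiating $\langle N,T\rangle = 0$). The one auxiliary identity I would record at the outset is the tangential derivative of the curvature: differentiating the shrinker equation along $T$ and using $\nabla_T N = -kT$ gives $\partial_T k = \tfrac12 k\langle x, T\rangle$, which is exactly the relation $\partial_T\bigl(k + \tfrac12\langle x,N\rangle\bigr) = 0$ already used in the second-variation computation. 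I would also note $\langle x, N\rangle = -2k$ and hence $\partial_T\langle x, T\rangle = 1 + k\langle x, N\rangle = 1 - 2k^2$.

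For the translation eigenfunctions, fix $V \in \mathbb{R}^2$ and set $w = \langle V, N\rangle$. Since $V$ is constant, $\partial_T w = \langle V, \nabla_T N\rangle = -k\langle V, T\rangle$, so $\nabla w = -k\langle V,T\rangle\,T$; differentiating once more, using $\nabla_T T = kN$ and the identity for $\partial_T k$, yields $\Delta w = -\tfrac12 k\langle x,T\rangle\langle V,T\rangle - k^2\langle V,N\rangle$. The drift term $-\tfrac12\langle x,\nabla w\rangle = \tfrac12 k\langle x,T\rangle\langle V,T\rangle$ cancels the first summand exactly, leaving $\mathcal{L}w = -k^2 w$. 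Adding $k^2 w + \tfrac12 w$ then gives $Lw = \tfrac12 w$, as claimed.

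For the curvature I would compute $\nabla k = \tfrac12 k\langle x,T\rangle\,T$ from the auxiliary identity, and then $\Delta k = \partial_T\bigl(\tfrac12 k\langle x,T\rangle\bigr) = \tfrac14 k\langle x,T\rangle^2 + \tfrac12 k(1 - 2k^2)$, using $\partial_T\langle x,T\rangle = 1 - 2k^2$. The drift term $-\tfrac12\langle x,\nabla k\rangle = -\tfrac14 k\langle x,T\rangle^2$ again cancels the $\langle x,T\rangle^2$ contribution, so that $\mathcal{L}k = \tfrac12 k - k^3$, and hence $Lk = \mathcal{L}k + k^3 + \tfrac12 k = k$.

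I expect no serious obstacle; the content is a routine shrinker computation in the style of Colding--Minicozzi \cite{CM}, and the cancellation of the $\langle x,T\rangle$-terms by the drift is the only mildly delicate point. What does require care is the bookkeeping of sign conventions: the paper fixes neither the orientation of $N$ nor the sign of $k$ on a given arc, so I would check that the two sides of each identity transform consistently under $N \mapsto -N$ (both $k$ and $\langle V,N\rangle$ reverse sign, while $\mathcal{L}$, $k^2$, and $\tfrac12$ are unchanged), confirming that the stated identities hold independently of these choices arc by arc.
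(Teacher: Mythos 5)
Your pointwise computation on each arc is correct and is essentially the computation the paper performs: both verify $\mathcal{L}k=\tfrac12 k-k^3$ and $\mathcal{L}\langle V,N\rangle=-k^2\langle V,N\rangle$ by differentiating the shrinker equation along $T$, and your auxiliary identities $\partial_Tk=\tfrac12 k\langle x,T\rangle$ and $\partial_T\langle x,T\rangle=1-2k^2$ are exactly what the paper uses in divergence form. The genuine gap is your opening claim that ``neither statement involves the multi-junctions.'' In this paper an eigenfunction of $L$ means a solution of the boundary value problem \eqref{eigenprob}, which besides $-Lv=\lambda v$ on the arcs imposes two junction conditions at every $O_i$: the compatibility condition $\sum_j\eta_i^jv_i^j=0$ (needed for $v$ to arise from an actual variation vector field, i.e.\ to lie in $\mathfrak{V}(\Gamma)$), and the natural boundary condition that $\eta_i^j\langle\nabla(ve^{-\frac{|x|^2}{4}})_i^j,\hat{T}_i^j\rangle$ be independent of $j$. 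Roughly half of the paper's proof is devoted to checking these. For $v=k$ the first condition follows from $\sum_j\eta_i^jN_i^j=0$ and the second from the fact that $ke^{-\frac{|x|^2}{4}}$ is constant on each Abresch--Langer arc, so the co-normal quantity vanishes identically; for $v=\langle V,N\rangle$ the first again follows from $\sum_j\eta_i^jN_i^j=0$, while the second requires the computation in \eqref{eq:f_2} showing that the co-normal quantity equals $-\langle\bar{\nabla}e^{-\frac{|x|^2}{4}},\mathcal{R}V\rangle$ at $O_i$, which is manifestly independent of $j$. The author explicitly flags this as the only point of departure from \cite{CM}, and \eqref{eq:f_2} is reused later in estimating the cross term $B$ for the fish and the rocket, so the omission is not cosmetic. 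Your check of invariance under $N\mapsto -N$ is sound but does not substitute for the junction conditions.
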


\begin{proof}
For the function $k=-\frac{\langle x,N\rangle}{2}$, we have 
\begin{equation}
\begin{split}
\Delta k&=-\mathbf{div}\left(\frac{\langle x,-kT\rangle}{2}T\right)=-\frac{1}{2}\left(-k\langle T,T\rangle+\langle x,-\nabla_TkT-k^2N\rangle\right)\\
&=\frac{1}{2}\left(k+\langle x,\nabla k\rangle+k^2\langle x,N\rangle\right)=\frac{1}{2}k+\frac{1}{2}\langle x,\nabla k\rangle-k^3.
\end{split}
\end{equation}
Therefore, $Lk=k$. We need to check if it satisfies the boundary condition. At $O_i$, we have $k_i^j=-\frac{\langle x,N_i^j\rangle}{2}$ and $\sum_j\eta_i^jk_i^j=-\frac{1}{2}\langle x,\sum_j\eta_i^jN_i^j\rangle=0$. For each $j$, since on an Abresch and Langer curve, $ke^{-\frac{|x|^2}{4}}$ is a constant, we have $\nabla(ke^{-\frac{|x|^2}{4}})_i^j=0$ for each $j$. Therefore, $\eta_i^j\langle\nabla (v_i^je^{-\frac{|x|^2}{4}}),\hat{T}_i^j\rangle=0$ and it is independent of $j$.

For the function $\langle V,N\rangle$, we have 
\begin{equation}
\begin{split}
\Delta \langle V,N\rangle&=\mathbf{div}\left(\langle V,-kT\rangle T\right)=-\langle V,\partial_TkT+k^2N\rangle\\%=\partial_T\left(\frac{\langle x,N\rangle}{2}\right)\langle V,T\rangle-k^2\langle V,N\rangle\\
&=\frac{\langle x,-kT\rangle}{2}\langle V,T\rangle-k^2\langle V,N\rangle=\frac{1}{2}\left\langle x,\nabla\langle V,N\rangle\right\rangle-k^2\langle V,N\rangle.
\end{split}
\end{equation}
Hence, we have $L\langle V,N\rangle=\frac{1}{2}\langle V,N\rangle$. Again, we need to check the boundary condition. The summation term at a multi-junction $O_i$ is given by $\sum_j\eta_i^j\langle V,N_i^j\rangle=\langle V,\sum_j\eta_i^jN_i^j\rangle=0$. 
\begin{equation}
    \partial_T \langle V,N\rangle=\langle V,-kT \rangle=-k\langle V,T\rangle.
\end{equation}
Hence, for each $j$, let $v=\langle V,N\rangle$,
\begin{equation}\label{eq:f_2}
\begin{split}
\eta_i^j\langle& \nabla (ve^{-\frac{|x|^2}{4}}),\hat{T}\rangle_i^j=\eta_i^j\left(-k\langle V,\hat{T}\rangle e^{-\frac{|x|^2}{4}}+\langle \nabla e^{-\frac{|x|^2}{4}},\hat{T}\rangle\langle V,N\rangle\right)_i^j\\
    &=-\eta_i^j\left\langle\nabla e^{-\frac{|x|^2}{4}},N\right\rangle_i^j\left\langle V,\hat{T}\right\rangle_i^j+\eta_i^j\left\langle\nabla e^{-\frac{|x|^2}{4}},\hat{T}\right\rangle_i^j\left\langle V,N\right\rangle_i^j\\
    &=-\left\langle\nabla e^{-\frac{|x|^2}{4}},\mathcal{R}\hat{T}\right\rangle_i^j\left\langle V,\hat{T}\right\rangle_i^j+\left\langle\nabla e^{-\frac{|x|^2}{4}},\hat{T}\right\rangle_i^j\left\langle V,\mathcal{R}\hat{T}\right\rangle_i^j=-\left\langle \nabla e^{-\frac{|x|^2}{4}},\mathcal{R}V\right\rangle,\\
\end{split}
\end{equation}
where $\mathcal{R}$ is rotating by $\frac{\pi}{2}$ in the counter-clockwise direction. At any multi-junction $O_i$, this quantity is independent of $j$, as desired.
\end{proof}
\begin{rmk}
This computation is the same as in \cite{CM} except we need to check the boundary condition at the triple-junctions. Also, we have another eigenfunction $L\langle x,T\rangle=0$. This eigenfunction corresponds to a rotation in $\mathbb{R}^2$. 
\end{rmk}

\section{The variation direction of 4-ray star, 5-ray star}\label{sec:stars}
From the bilinear form
\begin{equation}
\begin{split}
    \sqrt{4\pi}[f,f]&=\int_\Gamma f\left(-Lf\right)e^{-\frac{|x|^2}{4}}d\sigma-\sum_{i,j}f_i^j\left\langle \nabla\left( fe^{-\frac{|x|^2}{4}}\right)_i^j,\hat{T}_i^j\right\rangle.
\end{split}
\end{equation}
We can think of the quantity as contributed by each curve $\gamma$ separately.
\begin{lem}\label{lem:const}
For any smooth AL-curve $\gamma$, and any function $f\in\mathfrak{V}$, the contribution of $\gamma$ to the bilinear form $\sqrt{4\pi}[f,f]$ is
\begin{equation}
\begin{split}
    \int_\gamma \left(-1+\frac{1}{4}\langle x,T\rangle^2\right)f^2e^{-\frac{|x|^2}{4}}+\left\langle \nabla f,\nabla \left(fe^{-\frac{|x|^2}{4}}\right)\right\rangle+\left\langle\nabla f,f \nabla e^{-\frac{|x|^2}{4}}\right\rangle d\sigma.\\
\end{split}
\end{equation}
In the special case that $f$ is a constant on $\gamma$, the contribution of $\gamma$ is
\begin{equation}
    \int_\gamma \left(-1+\frac{1}{4}\langle x,T\rangle^2\right)f^2 e^{-\frac{|x|^2}{4}}d\sigma.
\end{equation}
\end{lem}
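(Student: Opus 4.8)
The goal is to rewrite $\sqrt{4\pi}[f,f]$ as a sum over curves of the stated integrand. The plan is to start from the already-derived expression
\begin{equation}\nonumber
\sqrt{4\pi}[f,f]=\int_\Gamma f(-Lf)e^{-\frac{|x|^2}{4}}d\sigma-\sum_{i,j}f_i^j\left\langle\nabla\left(fe^{-\frac{|x|^2}{4}}\right)_i^j,\hat{T}_i^j\right\rangle,
\end{equation}
and to localize everything curve-by-curve. On a single AL-curve $\gamma$, I would integrate the bulk term by parts. Recalling $Lf=\mathcal{L}f+k^2f+\frac12 f$ and $\mathcal{L}f=e^{|x|^2/4}\mathbf{div}(e^{-|x|^2/4}\nabla f)$, the term $\int_\gamma f(-\mathcal{L}f)e^{-|x|^2/4}d\sigma$ becomes, after the divergence theorem, $\int_\gamma|\nabla f|^2 e^{-|x|^2/4}d\sigma$ together with boundary contributions at the two endpoints. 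Those boundary contributions are exactly what cancel the $\sum_{i,j}$ junction term once summed over all curves, since that junction sum is built from the same $\langle\nabla(fe^{-|x|^2/4}),\hat T\rangle$ data. This is the same integration-by-parts identity proved in the symmetry lemma, so I would invoke that bookkeeping rather than redo it.

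The next step is purely algebraic: collect the remaining bulk terms, namely $\int_\gamma|\nabla f|^2 e^{-|x|^2/4}$ minus $\int_\gamma(k^2+\tfrac12)f^2 e^{-|x|^2/4}$, and recognize that the quoted integrand is just a regrouping of these. Here I would use the shrinker relation $k=-\tfrac{\langle x,N\rangle}{2}$, so that $k^2=\tfrac14\langle x,N\rangle^2=\tfrac14(|x|^2-\langle x,T\rangle^2)$. Substituting gives $-(k^2+\tfrac12)f^2=-\tfrac12 f^2-\tfrac14|x|^2 f^2+\tfrac14\langle x,T\rangle^2 f^2$. The remaining task is to verify that the gradient terms written in the statement, $\langle\nabla f,\nabla(fe^{-|x|^2/4})\rangle+\langle\nabla f,f\nabla e^{-|x|^2/4}\rangle$, reproduce $|\nabla f|^2 e^{-|x|^2/4}$ plus the leftover $-\tfrac12 f^2 e^{-|x|^2/4}-\tfrac14|x|^2 f^2 e^{-|x|^2/4}$ pieces after factoring out $e^{-|x|^2/4}$; expanding $\nabla(fe^{-|x|^2/4})=e^{-|x|^2/4}(\nabla f-\tfrac12 f\,\langle x,T\rangle T)$ and using $|\nabla e^{-|x|^2/4}|$-type identities on the curve makes this a direct match. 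This step is routine but must be carried out carefully to confirm the constant $-1$ and the coefficient $\tfrac14$ in front of $\langle x,T\rangle^2$.

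For the special case $f\equiv\text{const}$ on $\gamma$, the plan is simply to set $\nabla f=0$ in the general formula, killing both gradient terms and leaving $\int_\gamma(-1+\tfrac14\langle x,T\rangle^2)f^2 e^{-|x|^2/4}d\sigma$. The main obstacle I anticipate is not any single hard estimate but the careful handling of the junction boundary terms: one must be sure that the per-curve formula is well-defined despite the endpoint contributions, i.e. that the statement is really asserting an identity for $\sqrt{4\pi}[f,f]$ as a whole once summed, with the junction terms from the global formula accounted for by the integration-by-parts boundary terms. Keeping the sign of $\hat T_i^j$ (pointing into the curve) consistent with the divergence-theorem outward convention is the delicate point, and I would resolve it exactly as in the earlier symmetry lemma so that no spurious boundary term survives in the final expression.
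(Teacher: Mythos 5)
Your overall strategy (localize, integrate by parts, regroup) can be made to work, but as written it contains a bookkeeping error at the key cancellation step. Integrating $\int_\gamma f(-\mathcal{L}f)e^{-\frac{|x|^2}{4}}d\sigma$ by parts produces the boundary terms $\sum_{\partial\gamma} f e^{-\frac{|x|^2}{4}}\langle\nabla f,\hat T\rangle$, and these cancel only the $f_i^j(\partial_{\hat T}f)_i^je^{-\frac{|O_i|^2}{4}}$ part of the junction sum; since that sum is built from $\langle\nabla(fe^{-\frac{|x|^2}{4}}),\hat T\rangle = e^{-\frac{|x|^2}{4}}\partial_{\hat T}f + f\langle\nabla e^{-\frac{|x|^2}{4}},\hat T\rangle$, the piece $\frac12\sum_{i,j}(f_i^j)^2\langle O_i,\hat T_i^j\rangle e^{-\frac{|O_i|^2}{4}}$ survives --- this is exactly the second line of the paper's equation \eqref{eq:bilinear_form}, and it is essential later (it is what produces the $ae^{-\frac{a^2}{4}}$ term in the ray computation). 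So your claim that the boundary contributions ``exactly cancel the $\sum_{i,j}$ junction term'' is false, and the subsequent claim of a ``direct match'' of integrands also fails pointwise: after your reduction the bulk integrand is $|\nabla f|^2 e^{-\frac{|x|^2}{4}}+(-\frac12-\frac14|x|^2+\frac14\langle x,T\rangle^2)f^2e^{-\frac{|x|^2}{4}}$, whereas the statement's integrand expands to $|\nabla f|^2e^{-\frac{|x|^2}{4}}-\frac12\langle x,\nabla(f^2)\rangle e^{-\frac{|x|^2}{4}}+(-1+\frac14\langle x,T\rangle^2)f^2e^{-\frac{|x|^2}{4}}$; the two differ by $\mathbf{div}\left(\frac12 f^2\langle x,T\rangle e^{-\frac{|x|^2}{4}}T\right)$, whose integral over $\gamma$ is precisely the boundary term you dropped. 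The two omissions offset each other, so the end result would be numerically correct, but neither step is justified as stated.

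The paper avoids this entirely by integrating by parts in the opposite direction: it converts the junction term of each curve into a bulk integral, $-\sum_{\partial\gamma}f\langle\nabla(fe^{-\frac{|x|^2}{4}}),\hat T\rangle = \int_\gamma\mathbf{div}(f\nabla(fe^{-\frac{|x|^2}{4}}))d\sigma$, expands the divergence to produce $f\mathcal{L}fe^{-\frac{|x|^2}{4}}$ (which cancels against $-f\mathcal{L}fe^{-\frac{|x|^2}{4}}$ coming from $-fLf$) plus the two gradient terms plus $f^2\Delta e^{-\frac{|x|^2}{4}}$, and then uses $\Delta e^{-\frac{|x|^2}{4}}=(\frac14\langle x,T\rangle^2-\frac12+k^2)e^{-\frac{|x|^2}{4}}$ so that the $k^2$ from $L$ cancels and the coefficient $-1$ appears; no residual boundary terms ever arise. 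If you want to keep your route, you must carry the surviving boundary term $\frac12\sum_{\partial\gamma}f^2\langle x,\hat T\rangle e^{-\frac{|x|^2}{4}}$ through the computation and convert it back into the bulk via the divergence identity above; only then do you recover the stated integrand.
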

\begin{proof}
For any smooth AL-curve $\gamma$, the boundary term is
\begin{equation}
\begin{split}
    -\sum_{\partial\gamma}&f\left\langle \nabla fe^{-\frac{|x|^2}{4}},\hat{T}\right\rangle=\int_\gamma \mathbf{div}\left(f\nabla \left(fe^{-\frac{|x|^2}{4}}\right)\right)d\sigma\\
    &=\int_\gamma f\mathbf{div}\left(e^{-\frac{|x|^2}{4}}\nabla f+f\nabla e^{-\frac{|x|^2}{4}}\right)+\left\langle\nabla f,\nabla \left(fe^{-\frac{|x|^2}{4}}\right)\right\rangle d\sigma\\
    &=\int_\gamma f\mathcal{L}fe^{-\frac{|x|^2}{4}}+f\left(\left\langle\nabla f, \nabla e^{-\frac{|x|^2}{4}}\right\rangle+f\Delta e^{-\frac{|x|^2}{4}} \right)+\left\langle\nabla f,\nabla \left(fe^{-\frac{|x|^2}{4}}\right)\right\rangle d\sigma.\\
    \end{split}
\end{equation}
The Laplacian of $e^{-\frac{|x|^2}{4}}$ on the curve $\gamma$ is given by
\begin{equation}
\begin{split}
    \Delta e^{-\frac{|x|^2}{4}}&=\mathbf{div}\left(-\frac{\langle x,T\rangle}{2} e^{-\frac{|x|^2}{4}}T\right)=\left(\frac{\langle x,T\rangle^2}{4}-\frac{\langle T,T\rangle}{2}-\frac{\langle x,kN\rangle}{2}\right)e^{-\frac{|x|^2}{4}}\\
    &=\left(\frac{1}{4}\langle x,T\rangle^2-\frac{1}{2}+k^2\right)e^{-\frac{|x|^2}{4}}.\\
\end{split}
\end{equation}
Combining both equation and use $Lf=\mathcal{L}f+(k^2+\frac{1}{2})f$ yields the desired result.
\end{proof}

Note that on a ray $\gamma$ starting at $r=a$ and goes to infinity, we have
\begin{equation}
    \int_\gamma \left(-1+\frac{1}{4}r^2 \right)e^{-\frac{r^2}{4}}dr=-\frac{1}{2}\left(\int_a^\infty e^{-\frac{r^2}{4}}dr-ae^{-\frac{r^2}{4}}\right)
\end{equation}
From the calculator, when $a<1.063$, this is negative. We have the following theorem:
\begin{thm}
If $\Gamma$ is the 4-ray star or the 5-ray star, we can find $\tilde{f}\in\mathfrak{V}$ which satisfies the following conditions:
\begin{enumerate}
    \item $\tilde{f}$ is orthogonal to $k$, $\langle V,N\rangle$, $V\in\mathbb{R}^2$ in $e^{-\frac{|x|^2}{4}}$-weighted $L^2$ norm.
    \item $[\tilde{f},\tilde{f}]<0$.
    \item $\tilde{f}$ is constant on each ray which goes to infinity.
\end{enumerate}
\end{thm}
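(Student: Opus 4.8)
The plan is to exploit the rotational symmetry of the star together with the per-curve decomposition of the bilinear form in Lemma \ref{lem:const}. Write $n=4$ or $5$ for the number of rays, and let $G=\mathbb{Z}_n$ be the group of rotations about the origin preserving $\Gamma$. Choosing the unit normal $N$ equivariantly (possible since no ray is fixed by a nontrivial rotation), the weighted measure $e^{-|x|^2/4}\,d\sigma$ is $G$-invariant, so $L^2(\Gamma,e^{-|x|^2/4}d\sigma)$ splits into mutually orthogonal isotypic components. Under this action $k=-\langle x,N\rangle/2$ is $G$-invariant (trivial representation) and the two-dimensional space $\{\langle V,N\rangle:V\in\mathbb{R}^2\}$ carries the standard vector representation $V\mapsto RV$. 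For $n=4$ the group $\mathbb{Z}_4$ has, besides the trivial and the vector (rotation-by-$\pi/2$) representations, the sign representation on which the generator acts by $-1$; for $n=5$ it has, besides the trivial and vector (rotation-by-$2\pi/5$) representations, the rotation-by-$4\pi/5$ representation. In either case call this leftover representation $\rho$; it is inequivalent to both the trivial and the vector representation, which is exactly the feature that fails for the $3$-ray star.

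The test function $\tilde f$ will be \emph{piecewise constant}: a constant $c_j$ on the $j$-th ray and a constant $d_k$ on the $k$-th arc bounding the enclosed region, chosen $G$-equivariantly so that the whole assignment transforms according to $\rho$ (for $n=4$ the ray values alternate $c,-c,c,-c$ and the interior values alternate $d,-d,d,-d$). Such an assignment lies in the $\rho$-isotypic component of $L^2$, so by orthogonality of distinct isotypic components it is automatically $e^{-|x|^2/4}$-orthogonal to $k$ and to every $\langle V,N\rangle$; this gives condition (1), while condition (3) holds by construction. It then remains to enforce the junction relations $\sum_j\eta_i^j v_i^j=0$ so that $\tilde f\in\mathfrak{V}$. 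By equivariance it suffices to impose this single scalar relation at one vertex, which is a homogeneous linear equation in the two (for $n=4$) or four (for $n=5$) free parameters; hence a nonzero solution exists, yielding a nonzero $\tilde f\in\mathfrak{V}$ in the representation $\rho$.

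For condition (2) I would use that $\tilde f$ is constant on each edge, so the special case of Lemma \ref{lem:const} gives
\[
\sqrt{4\pi}\,[\tilde f,\tilde f]=\sum_{\text{edges }\gamma}\int_\gamma\left(-1+\tfrac14\langle x,T\rangle^2\right)\tilde f^2\,e^{-\frac{|x|^2}{4}}\,d\sigma .
\]
On a ray emanating from a triple junction at radius $a$ one has $\langle x,T\rangle=r$, and the computation preceding the theorem shows the ray integral equals $-\tfrac12\bigl(\int_a^\infty e^{-r^2/4}\,dr-a\,e^{-a^2/4}\bigr)c_j^2$, strictly negative once $a<1.063$. On an interior arc the integrand is pointwise negative as soon as $\langle x,T\rangle^2<4$, which holds wherever $|x|<2$. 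Thus, provided the numerical data for the $4$-ray and $5$-ray stars collected in the appendix confirm that every triple junction sits at radius $a<1.063$ and that the enclosed region lies inside the disk of radius $2$, every edge contributes nonpositively and the rays contribute strictly negatively, so $[\tilde f,\tilde f]<0$.

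The representation-theoretic orthogonality and the ray estimate are clean; the step I expect to require the most care is the last one, namely certifying from the tabulated geometry of the two stars that the junctions lie at radius below $1.063$ and that the interior arcs never leave the disk of radius $2$, so that the interior contributions cannot overwhelm the negative ray contributions. A secondary point is to check that the single junction relation does not force all constants to vanish; since it is one equation in at least two unknowns this is automatic, but I would record the explicit nonzero solution to make $\tilde f$ concrete.
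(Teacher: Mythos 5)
Your proposal is correct, and its analytic core coincides with the paper's: both restrict to (essentially) piecewise-constant test functions, invoke Lemma \ref{lem:const} to reduce $[\tilde f,\tilde f]$ to a sum of per-edge integrals of $\bigl(-1+\tfrac14\langle x,T\rangle^2\bigr)\tilde f^2 e^{-|x|^2/4}$, and use the same numerical facts (the junctions sit at $r_{\mathbf{in}}=0.9443$ resp.\ $0.6474<1$, and the bounding arcs lie inside the unit disk, so a fortiori inside radius $2$) to conclude strict negativity. Where you genuinely diverge is the orthogonality step: the paper simply observes that the constrained space $\mathcal V$ of edgewise constants has dimension $n\ge 4$, the form is negative definite on it, and orthogonality to the $3$-dimensional span of $k$ and $\langle V,N\rangle$ costs at most three linear conditions; you instead place $\tilde f$ in a $\mathbb{Z}_n$-isotypic component inequivalent to the trivial one (carrying $k$) and the vector one (carrying $\langle V,N\rangle$), getting orthogonality for free and an explicit $\tilde f$. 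Your route buys concreteness and explains why the argument fails for the $3$-ray star (no leftover representation), at the price of the equivariance bookkeeping; two small points to tighten are (i) for $n=5$ the leftover representation is $2$-dimensional, so "imposing the relation at one vertex" is not literally enough --- rather the junction constraints restricted to the $4$-dimensional $\rho$-isotypic component have rank at most $2$ (the multiplicity of $\rho$ in the permutation representation on vertices), leaving a $2$-dimensional kernel; and (ii) you should verify that the coefficient of the ray value in the single vertex relation is nonzero (it is, being $\pm1$), so the relation does not degenerate. Neither point affects the validity of the proof.
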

\begin{figure}[H]
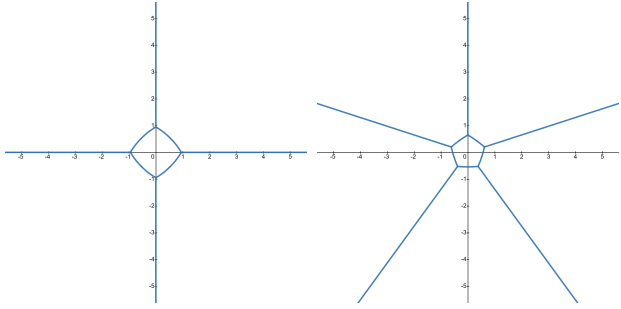

    \centering
    \includegraphics[width=4cm]{4cell.png}
    \includegraphics[width=4cm]{5cell.png}
    \caption{The 4-ray star and the 5-ray star. They are unstable shrinkers}
\end{figure}

\begin{proof}
Consider the space of functions $\mathcal{V}\subset\mathfrak{V}$ which contains all functions which are constant on each curve. The dimension of $\mathcal{V}$ is 4 for the 4-ray star and 5 for the 5-ray star.

From lemma \ref{lem:const}, the contribution of any curve $\gamma$ is given by $\int_\gamma \left(-1+\frac{1}{4}\langle x,T\rangle^2\right)f^2 e^{-\frac{|x|^2}{4}}d\sigma$. There are two cases, $\gamma$ is a ray or not. 

If $\gamma$ is a ray
\begin{equation}
\begin{split}
    \int_\gamma \left(-1+\frac{1}{4}\langle x,T\rangle^2\right)f^2 e^{-\frac{|x|^2}{4}}d\sigma=-\frac{1}{2}f^2\left(\int_a^\infty e^{-\frac{r^2}{4}}dr-ae^{-\frac{a^2}{4}}\right),\\
\end{split} 
\end{equation}
where $a$ is the starting $r$-value of the ray, from \cite{CG}, we have $a<1$ and this term is negative unless $f=0$.

If $\gamma$ is not a ray, from \cite{CG}, we know this curve is completely contained in the unit circle. Therefore, $\langle x,T\rangle\leq|x||T|<1$.
\begin{equation}
\begin{split}
    \int_\gamma \left(-1+\frac{1}{4}\langle x,T\rangle^2\right)f^2 e^{-\frac{|x|^2}{4}}d\sigma\leq\int_\gamma \left(-1+\frac{1}{4}\right)f^2 e^{-\frac{|x|^2}{4}}d\sigma\leq0\\
\end{split}
\end{equation}
This equals zero only when $f=0$.

Therefore, the quotient on $\mathcal{V}$ is strictly negative except for $f=0$. Since the space spanned by $k$ and $\langle V,N\rangle$, $v\in\mathbb{R}^2$ has dimension 3, there exist $\tilde{f}\in\mathcal{V}$ satisfied the desired conditions.
\end{proof}

\section{The variation direction of fish, rocket}\label{sec:fish_rocket}
We want to establish the following theorem.
\begin{thm}
If $\Gamma$ is the fish or the rocket, we can find $\tilde{f}\in\mathfrak{V}$ which satisfies the following conditions:
\begin{enumerate}
    \item $\tilde{f}$ is orthogonal to $k$, $\langle V,N\rangle$ in $e^{-\frac{|x|^2}{4}}$-weighted $L^2$ norm for all $V\in\mathbb{R}^2$.
    \item $[\tilde{f},\tilde{f}]<0$.
    \item $\tilde{f}$ is constant on each ray which goes to infinity.
\end{enumerate}
\end{thm}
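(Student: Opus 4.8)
The plan is to run the same argument as in the previous section. I would look for $\tilde f$ inside the finite-dimensional space $\mathcal{V}\subset\mathfrak{V}$ of functions that are constant on each curve of $\Gamma$; such a function automatically satisfies condition (3). On these functions Lemma \ref{lem:const} writes $\sqrt{4\pi}\,[\tilde f,\tilde f]$ as a sum over the curves of the single-curve contributions
\[
    \int_\gamma\left(-1+\tfrac14\langle x,T\rangle^2\right)f^2\,e^{-\frac{|x|^2}{4}}\,d\sigma ,
\]
so the whole problem reduces to controlling the sign of each such integral together with a dimension count against $k$ and $\langle V,N\rangle$, $V\in\mathbb{R}^2$.

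First I would split the curves of the fish and the rocket into the rays going to infinity and the bounded arcs of the region. For a ray starting at radius $a$ the contribution is $-\tfrac12 f^2\left(\int_a^\infty e^{-r^2/4}\,dr - a\,e^{-a^2/4}\right)$, which is negative exactly when $a<1.063$; I would read off from the numerical data collected in Section \ref{sec:appendix} that every ray of the fish and of the rocket starts inside this radius. The bounded arcs are where these two shrinkers genuinely differ from the stars: I cannot just invoke \cite{CG} to place them in the unit circle and conclude $\langle x,T\rangle^2<4$ pointwise, since these arcs may leave the unit disk. Instead I would estimate each $\int_\gamma(-1+\tfrac14\langle x,T\rangle^2)e^{-|x|^2/4}\,d\sigma$ directly from the explicit parametrization in the appendix, using the Gaussian weight to suppress the region $|x|>2$ where the integrand could turn positive, and verify that every bounded-arc contribution is still negative.

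Once every single-curve contribution is negative, $[\cdot,\cdot]$ is negative definite on $\mathcal{V}$ and it remains to produce a nonzero element orthogonal to the span of $k$, $\langle e_1,N\rangle$, $\langle e_2,N\rangle$. If $\dim\mathcal{V}>3$ this is immediate, exactly as for the stars. Should the dimension count be tight, I would exploit the reflection symmetry of the fish and the rocket: taking $e_2$ perpendicular to the symmetry axis, the function $\langle e_2,N\rangle$ is odd under the reflection while $k$ and $\langle e_1,N\rangle$ are even, so on the reflection-invariant subspace $\mathcal{V}^{\mathrm{sym}}\subset\mathcal{V}$ the orthogonality to $\langle e_2,N\rangle$ is automatic. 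Only the two even constraints then remain, and I would check from the explicit edge structure that $\dim\mathcal{V}^{\mathrm{sym}}\geq 3$, so that a nonzero $\tilde f\in\mathcal{V}^{\mathrm{sym}}$ killing these two functionals satisfies all three conditions.

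The main obstacle is the second step. Unlike for the stars, the bounded arcs need not lie in the unit disk, so per-curve negativity is no longer a one-line pointwise estimate but a genuine Gaussian-weighted integral that must be controlled numerically for each arc of the fish and of the rocket. Keeping the junction conditions $\sum_j\eta_i^j v_i^j=0$ compatible with the reflection symmetry while correctly bookkeeping $\dim\mathcal{V}^{\mathrm{sym}}$ is the secondary technical point.
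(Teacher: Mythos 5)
Your framework is the right one and, for the bottom arc and the two rays, it coincides with what the paper does (there the pointwise estimate still applies: the bottom arc of the fish and of the rocket stays within radius $r_{\mathbf{in}}<1$ of the origin, so only the top arc genuinely escapes the unit disk). But the plan breaks at the step you defer to a ``check'': the space of locally constant functions in $\mathfrak{V}$ is too small. For the fish there are $4$ curves and $2$ triple junctions; the two junction conditions $\sum_j\eta_i^jv_i^j=0$ force $v_{3,R}=v_{3,L}$ and $v_1+v_2+v_{3,R}=0$, so $\dim\mathcal{V}=2$ and every element is automatically reflection-symmetric. The symmetry therefore only disposes of the orthogonality to the odd function $\langle e_1,N\rangle$; you are still left with the two generically independent conditions $[[\tilde f k]]=0$ and $[[\tilde f\langle e_2,N\rangle]]=0$ on a $2$-dimensional space, which generically leaves only $\tilde f=0$. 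The same deficit occurs for the rocket. So ``$\dim\mathcal{V}^{\mathrm{sym}}\geq 3$'' is false for locally constant functions, and no amount of per-curve numerical work rescues the dimension count.

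The paper's resolution is to enlarge the test space on the top arc $\gamma_1$ only: there it takes $a_1 f_1+a_2 f_2$ with $f_1=k$ and $f_2=\langle N,e_2\rangle$, keeping constants $a_3$ on $\gamma_2$ and $a_4$ on the two rays. This gives $4$ coefficients with a single (symmetric) junction constraint, hence $\dim\mathcal{V}=3$, and the two even orthogonality conditions leave a nonzero $\tilde f$. This choice also eliminates your ``main obstacle'': because $f_1$ and $f_2$ are eigenfunctions of $L$ with $Lf_1=f_1$, $Lf_2=\tfrac12 f_2$, and $f_1e^{-\frac{|x|^2}{4}}$ is constant on AL-curves, the contribution of $\gamma_1$ collapses via integration by parts to a $2\times 2$ quadratic form $Aa_1^2+2Ba_1a_2+Ca_2^2$ whose coefficients are junction boundary terms and total turning angles, read off from the tabulated values of $c$, $r_{\mathbf{in}}$, $h_1$ — no Gaussian-weighted integral over the far part of the arc is ever computed. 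If you want to salvage your version, you would have to both carry out the numerical integration on $\gamma_1$ \emph{and} verify the accidental degeneracy of two linear functionals on a $2$-dimensional space; neither is done, and the second is not expected to hold.
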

The fish and the rocket are shown in the following picture.
\begin{figure}[H]
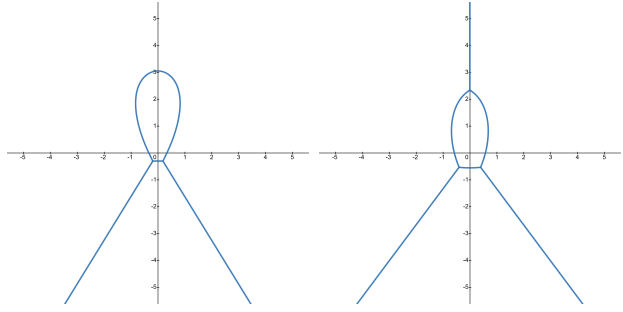

    \centering
    \includegraphics[width=4cm]{fish.png}
    \includegraphics[width=4cm]{rocket.png}
    \caption{The fish and the rocket}
\end{figure}
\begin{proof}
We can rotate the fish and the rocket such that they are symmetric with respect to the $y$-axis with two rays pointing downward as in figure 4. Let the curve on top be $\gamma_1$ with the normal pointing inward. (Note that for the rocket, this is a union of two AL-curves.) Let the AL-curve on the bottom be $\gamma_2$ with the normal pointing outward. Let the ray on the bottom right be $\gamma_{3,R}$, with the normal pointing right and the ray on the bottom left be $\gamma_{3,L}$, with the normal pointing left. Let the triple junction on the right be $O_\mathbf{R}$ and the triple junction on the left be $O_\mathbf{L}$. For $j=1,2,3$, we have $\eta_{\mathbf{R}}^j=1$ and $\eta_{\mathbf{L}}^j=-1$

Let $f_1=k=-\frac{\langle x,N\rangle}{2}$, $f_2=\langle N,e_2\rangle$. We consider the function space $\mathcal{V}\subset\mathfrak{V}$ such that all function $v\in\mathcal{V}$ of the following form.
\begin{equation}
v=\displaystyle \begin{cases} 
a_1f_2+a_2f_2, & \text{on $\gamma_1$}, \\  
a_3, & \text{on $\gamma_2$}, \\  
a_4, & \text{on $\gamma_{3,R}$ and $\gamma_{3,L}$.}  
 \end{cases}
\end{equation}   
There are 4 coefficients with 1 constraint. $\mathcal{V}$ is a linear function space with dimension 3. For all $v\in \mathcal{V}$, 
\begin{equation}
    [v,v]=(\mathbf{I})+(\mathbf{II})+(\mathbf{III}),
\end{equation}
where
\begin{equation}
\begin{split}
    (\mathbf{I})=&\int_{\gamma_1}-\left(a_1f_1+a_2f_2\right)L\left(a_1f_1+a_2f_2\right)e^{-\frac{|x|^2}{4}}d\sigma\\
    &-\sum_{\partial\gamma_1}\left(a_1f_1+a_2f_2\right)\left\langle \nabla\left(\left(a_1f_1+a_2f_2\right)e^{-\frac{|x|^2}{4}}\right),\hat{T}\right\rangle,\\
    (\mathbf{II})=&\int_{\gamma_2}-a_3f_1L\left(a_3f_1\right)e^{-\frac{|x|^2}{4}}d\sigma-\sum_{\partial\gamma_2}a_3f_1\left\langle \nabla\left(a_3f_1e^{-\frac{|x|^2}{4}}\right),\hat{T}\right\rangle,\\
    (\mathbf{III})=&2\int_{\gamma_{3,R}}-a_4f_2L\left(a_4f_2\right)e^{-\frac{|x|^2}{4}}d\sigma-2\sum_{\partial\gamma_{3,R}}a_4f_2\left\langle \nabla\left(a_4f_2e^{-\frac{|x|^2}{4}}\right),\hat{T}\right\rangle.\\
\end{split}
\end{equation}

Since any function $f\in\mathcal{V}$ is constant on $\gamma_2$, $\gamma_{3,R}$, and $\gamma_{3,L}$ and the endpoints of these curves are contained in the unit circle, using lemma \ref{lem:const}, from the same argument as in the previous section, $(\mathbf{II})$ and $(\mathbf{III})$ are nonpositive. They are zero only when $a_3$, $a_4$ vanish respectively.

Now, we only need to show that $(\mathbf{I})$ is nonpositive. Using 
\begin{equation}\label{eq:by_parts}
\begin{split}
    &\int_{\gamma_1}-f_1Lf_2e^{-\frac{|x|^2}{4}}d\sigma-\sum_{\partial\gamma_1}f_1\left\langle \nabla\left(f_2e^{-\frac{|x|^2}{4}}\right),\hat{T}\right\rangle\\
    &=\int_{\gamma_1}-f_2Lf_1e^{-\frac{|x|^2}{4}}d\sigma-\sum_{\partial\gamma_1}f_2\left\langle \nabla\left(f_1e^{-\frac{|x|^2}{4}}\right),\hat{T}\right\rangle
\end{split}
\end{equation}
and $f_1=k$, $f_1e^{-\frac{|x|^2}{4}}$ is a constant on $\gamma_1$. We have
\begin{equation}
\begin{split}
    (\mathbf{I})&    =\left(-\int_{\gamma_1}f_1^2e^{-\frac{|x|^2}{4}}d\sigma\right)a_1^2+\left(-\int_{\gamma_1}f_1f_2e^{-\frac{|x|^2}{4}}d\sigma\right)2a_1a_2\\
    &\left(-\frac{1}{2}\int_{\gamma_1}f_2^2e^{-\frac{|x|^2}{4}}d\sigma-\sum_{\partial\gamma_1}f_2\left\langle \nabla\left(f_2e^{-\frac{|x|^2}{4}}\right),\hat{T}\right\rangle\right)a_2^2
    =Aa_1^2+2Ba_1a_2+Ca_2^2.
\end{split}
\end{equation}
This is a quadratic polynomial of $a_1$ and $a_2$. We want to show this term is negative unless $a_1=a_2=0$.

The coefficient of $a_1^2$ is 
\begin{equation}
    A=\int_{\gamma_1} -f_1^2e^{-\frac{|x|^2}{4}}d\sigma=-\int_{\gamma_1} k \frac{1}{2c}d\sigma=-\frac{\Delta\phi}{2c},
\end{equation}
where $c$ is the energy of the curve and $\Delta\phi$ is the change of angle of the tangent vector field on $\gamma_1$. Note that $h_1$ is the angle formed by the extension of $\gamma_{3,R}$ and $\gamma_{3,L}$ at the origin. For the fish, the change of angle is $\Delta\phi=\frac{5\pi}{3}-h_1$ and $-\frac{\Delta\phi}{2c}=-0.6149$. For the rocket, we need to compute the change of angle of two curves and add them together, $\Delta\phi=\frac{4\pi}{3}-h_1$ and $-\frac{\Delta\phi}{2c}=-0.7542$.

For the coefficient $B$ of the cross term $2a_1a_2$, since $f_1$, $f_2$ are eigenfunctions corresponding to different eigenvalues, they are orthogonal on $\Gamma$. We can expect the integration on $\gamma_1$ should be small. Use equation \eqref{eq:by_parts} together with $Lf_1=f_1$, $f_1e^{-\frac{|x|^2}{4}}=\frac{1}{2c}$ is a constant, $Lf_2=\frac{1}{2}f_2$, we have
\begin{equation}
    \int_{\gamma_1}-\frac{1}{2}f_1f_2e^{-\frac{|x|^2}{4}}d\sigma-\sum_{\partial\gamma_1}f_1\left\langle \nabla\left(f_2e^{-\frac{|x|^2}{4}}\right),\hat{T}\right\rangle=\int_{\gamma_1}-f_2f_1e^{-\frac{|x|^2}{4}}d\sigma.
\end{equation}
We can deduce
\begin{equation}
\begin{split}
  B&=-\int_{\gamma_1}f_1f_2e^{-\frac{|x|^2}{4}}d\sigma=-2\sum_{\partial\gamma_1}\left\langle f_1\nabla\left(e^{-\frac{|x|^2}{4}} f_2\right),\hat{T}\right\rangle=2\sum_{i=\mathbf{R},\mathbf{L}}\eta_i^1 f_1\left\langle \nabla e^{-\frac{|x|^2}{4}},-e_1\right\rangle\\
   &=2\cdot 2 f_1(O_{\mathbf{R}})e^{-\frac{|O_{\mathbf{R}}|^2}{4}}\left\langle\frac{O_\mathbf{R}}{2},e_1\right\rangle=\frac{1}{c}r_{\mathbf{in}}\sin\left(\frac{h_1}{2}\right),
\end{split}
\end{equation}
where the third equality is from equation \eqref{eq:f_2} with $V=e_2$ and $r_{\mathbf{in}}$ is the $r$-value for $O_\mathbf{R}$ and $O_\mathbf{L}$. Note that the same equation holds for the rocket since the boundary term for the triple-junction on top vanishes.
This value is $0.0554$ for the fish, $0.2050$ for the rocket.

The coefficient $C$ of $a_2^2$ is given by
\begin{equation}
    \int_{\gamma_1}-\frac{1}{2}f_2^2e^{-\frac{|x|^2}{4}}d\sigma-\sum_{\partial\gamma_1}f_2\left\langle \nabla\left(f_2e^{-\frac{|x|^2}{4}}\right),\hat{T} \right\rangle.
\end{equation}
We consider the boundary term first. For both fish and the rocket, at $O_{\mathbf{R}}$, we have 
\begin{equation}
\begin{split}
    f_2&=\langle N,e_2\rangle=\cos\left(\frac{h_1}{2}+\frac{\pi}{6}\right),\\
    \left\langle \nabla\left(f_2e^{-\frac{|x|^2}{4}}\right),\hat{T} \right\rangle&=\eta_\mathbf{R}^1\left\langle \nabla e^{-\frac{|x|^2}{4}},-e_1 \right\rangle=\frac{1}{2}r_{\mathbf{in}}e^{-\frac{r_{\mathbf{in}}^2}{4}}\sin\left(\frac{h_1}{2}\right).
\end{split}
\end{equation} 
We have the similar result for $O_{\mathbf{L}}$. Hence,
\begin{equation}
    C<-\sum_{\partial\gamma_1}f_2\left\langle \nabla\left(f_2e^{-\frac{|x|^2}{4}}\right),\hat{T} \right\rangle=-\cos\left(\frac{h_1}{2}+\frac{\pi}{6}\right)r_{\mathbf{in}}e^{-\frac{r_{\mathbf{in}}^2}{4}}\sin\left(\frac{h_1}{2}\right).
\end{equation}
This value is $-0.2415$ for fish and $-0.2124$ for the Rocket.

\begin{center}
    \begin{tabular}{|c|l|l|l|l|}
    \hline
    Name & $A$ & $B$ & $C$ & $B^2-AC$\\
    \hline  
    Fish & $-0.6149$ & $0.0554$ & $<-0.08562$ & $<-0.04957$\\
    \hline  
    Rocket & $-0.7542$ & $0.2050$ & $<-0.1417$ & $<-0.06484$\\
    \hline  
    \end{tabular}
\end{center}
From the value above, we can deduce that the term involving $a_1$, $a_2$ are negative unless $a_1=a_2=0$. Therefore, $[\cdot,\cdot]$ is negative definite on $\mathcal{V}$. We can find an even function $\tilde{f}$ in $\mathcal{V}$ which is orthogonal to $k$ and to $\langle N,e_2\rangle$. Since $\langle N, e_1\rangle$ is an odd function. The function in $\mathcal{V}$ is orthogonal to it automatically.
\end{proof}

\begin{rmk}
Even though the lens and the 3-ray star have the same topology as the fish and rocket, respectively, the estimation above cannot be applied to them since the triple junctions lie outside of the unit circle.
\end{rmk}

\section{$F$-unstableness of some regular shrinkers}\label{sec:F_unstable}
In previous two sections, we find a function $\tilde{f}$ which is orthogonal to $k$ and $\langle V,N\rangle$ for all $V\in\mathbb{R}^2$ in $e^{-\frac{|x|^2}{4}}$-weighted $L^2$ norm. Now, we are going to show this implies $\Gamma$ is $F$-unstable. We need to find a compactly supported function $\bar{f}$ such that for all $y$, $h$, $[(\bar{f},y,h),(\bar{f},y,h)]$ is negative. Recall
\begin{equation}
\begin{split}
    [(f,y,h),(f,y,h)]=&\left[\left[-fLf+f\langle y,N\rangle-2fhk-h^2k^2-\frac{\langle y,N\rangle^2}{2} \right]\right]\\
    &-\frac{1}{\sqrt{4\pi}}\sum_{i,j}f_i^j\left\langle\nabla\left(fe^{-\frac{|x|^2}{4}}\right)_i^j,\hat{T}_i^j\right\rangle.\\
\end{split}
\end{equation}

\begin{thm}
On a regular shrinker $\Gamma$, if there exist a function $\tilde{f}$ which satisfies
\begin{enumerate}
    \item $\tilde{f}$ is orthogonal to $k$, $\langle V,N\rangle$ in $e^{-\frac{|x|^2}{4}}$-weighted $L^2$ norm,
    \item $[\tilde{f},\tilde{f}]<0$,
    \item $\tilde{f}$ is constant on each ray which goes to infinity,
\end{enumerate}
then the regular shrinker $\Gamma$ is $F$-unstable. Therefore, the 4-ray star, 5-ray star, fish, and rocket are $F$-unstable regular shrinkers.
\end{thm}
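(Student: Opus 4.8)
The plan is to upgrade the given $\tilde f$ into a compactly supported variation whose second variation stays negative no matter how the centre $x_0$ and the scale $t_0$ are moved; this is exactly the failure of the stability inequality. Since $\Gamma$ is a critical point of $F$, the first variation vanishes and the acceleration terms in $\frac{d^2}{ds^2}F_{x_s,t_s}(\Gamma_s)|_{s=0}$ drop out, so this quantity is just the diagonal bilinear form $[(f,y,h),(f,y,h)]$ evaluated on the first-order data $f=\langle V,N\rangle$, $y=x_0'$, $h=t_0'$. Hence $\Gamma$ is $F$-unstable precisely when some compactly supported $f\in\mathfrak V$ satisfies $[(f,y,h),(f,y,h)]<0$ for \emph{every} $(y,h)\in\mathbb R^2\times\mathbb R$.

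First I would verify that $\tilde f$ itself would do the job if it were compactly supported. Expanding the diagonal form,
\[
[(\tilde f,y,h),(\tilde f,y,h)]=[\tilde f,\tilde f]+[[\tilde f\langle y,N\rangle]]-2h[[\tilde f k]]-h^2[[k^2]]-\tfrac12[[\langle y,N\rangle^2]],
\]
the two mixed terms vanish by hypothesis~(1): orthogonality to $k$ kills $[[\tilde f k]]$, and orthogonality to every $\langle V,N\rangle$ forces $[[\tilde f N]]=0$, whence $[[\tilde f\langle y,N\rangle]]=\langle y,[[\tilde f N]]\rangle=0$. The surviving $(y,h)$-terms are nonpositive, so $[(\tilde f,y,h),(\tilde f,y,h)]\le[\tilde f,\tilde f]<0$ for all $(y,h)$ by hypothesis~(2).

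The real work is to cut off $\tilde f$, which is merely constant (not decaying) along the rays, to a compactly supported $\bar f=\chi_R\tilde f$, where $\chi_R$ is a radial cutoff equal to $1$ on $\{|x|\le R\}$ and vanishing outside $\{|x|\le 2R\}$. For $R>1$ the cutoff is $1$ near every multi-junction and on every bounded curve (all lying in the unit disk), so the constraint $\sum_j\eta_i^j\bar f_i^j=0$ persists, $\bar f\in\mathfrak V$, and the earlier proposition yields a compactly supported variation with normal part $\bar f$. For a general $\bar f$ the diagonal form is a quadratic polynomial in $(y,h)$,
\[
[(\bar f,y,h),(\bar f,y,h)]=[\bar f,\bar f]+\langle y,b\rangle-2hc-h^2[[k^2]]-\tfrac12\,y^{\mathsf T}My,
\]
where $b=[[\bar f N]]$, $c=[[\bar f k]]$, and $M$ is the symmetric matrix with $y^{\mathsf T}My=[[\langle y,N\rangle^2]]$. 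Because these shrinkers contain curved arcs and their normals span the plane, the leading part $-h^2[[k^2]]-\tfrac12 y^{\mathsf T}My$ is negative definite (in a degenerate case a null direction also annihilates the corresponding linear term), so completing the square gives
\[
\sup_{y,h}[(\bar f,y,h),(\bar f,y,h)]=[\bar f,\bar f]+\frac{c^2}{[[k^2]]}+\tfrac12\,b^{\mathsf T}M^{-1}b.
\]

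The crux, and the step I expect to be the main obstacle, is to show this supremum converges to $[\tilde f,\tilde f]$ as $R\to\infty$; here the geometry of the rays is decisive. The error $\bar f-\tilde f$ is supported in $\{|x|>R\}$, which for $R>1$ meets $\Gamma$ only along the rays, and the rays are half-lines through the origin, so there $k\equiv0$, $N$ is constant, and $\tilde f$ is constant. Consequently $c=[[\bar f k]]=[[\tilde f k]]=0$ exactly, while the defects $b=[[\bar f N]]$ and $[\bar f,\bar f]-[\tilde f,\tilde f]$ reduce to ray integrals of $(\chi_R-1)$, $(\chi_R^2-1)$, and $(\chi_R')^2$ against the Gaussian weight $e^{-r^2/4}$ (the junction boundary terms are untouched, living inside the unit disk), all of order $e^{-R^2/4}$. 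Thus the supremum tends to $[\tilde f,\tilde f]<0$, so for $R$ large enough $[(\bar f,y,h),(\bar f,y,h)]<0$ for every $(y,h)$, which proves $F$-unstability. Finally, the theorems of the two preceding sections produce such a $\tilde f$ for the 4-ray star, 5-ray star, fish, and rocket, so all four are $F$-unstable.
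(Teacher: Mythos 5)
Your proposal is correct and follows the same overall strategy as the paper: truncate $\tilde f$ by a radial cutoff supported beyond all junctions and bounded arcs, observe that the discrepancy lives only on the rays (where $k\equiv 0$, $N$ is constant, and $\tilde f$ is constant, so in particular $[[\bar f k]]=[[\tilde f k]]=0$), and show the resulting error terms vanish as the cutoff radius grows, so that negativity of $[\tilde f,\tilde f]$ survives. The one place where you genuinely diverge is the treatment of the $(y,h)$-dependence: you maximize the quadratic explicitly, writing $\sup_{y,h}=[\bar f,\bar f]+c^2/[[k^2]]+\tfrac12 b^{\mathsf T}M^{-1}b$ with $M=[[NN^{\mathsf T}]]$, which requires the (true for these shrinkers, but asserted rather than proved) nondegeneracy $[[k^2]]>0$ and $M>0$, and then shows $b\to 0$; the paper instead completes the square against the tail $\hat f=\tilde f(1-\phi)$, rewriting $-[[\hat f\langle y,N\rangle]]-\tfrac12[[\langle y,N\rangle^2]]$ as $-\tfrac12[[(\hat f-\langle y,N\rangle)^2]]+\tfrac12[[\hat f^2]]$, so every $(y,h)$-term is manifestly nonpositive and no inversion or definiteness of $M$ is needed --- the leftover $\hat f$-terms are then absorbed by a dominated-convergence estimate $\bigl|[[\tfrac12\hat f^2+\hat fL\tilde f+\tilde fL\hat f-\hat fL\hat f]]\bigr|\le\int_{\Gamma\setminus B_{r_0}}(2|x|+10)\tilde f^2e^{-|x|^2/4}\,d\sigma$. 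Your route buys an explicit formula for the supremum; the paper's buys uniformity in $(y,h)$ for free and avoids the nondegeneracy hypothesis. Either way the conclusion, and its application to the 4-ray star, 5-ray star, fish, and rocket via the preceding two sections, is sound.
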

\begin{proof}
We need to construct a compactly supported function $\bar{f}$ such that for all $y$, $h$, $[(\bar{f},y,h),(\bar{f},y,h)]$ is negative. Let $\phi(x)=\phi(|x|)$ is a smooth radial cutoff function such that $\phi(r)=1$ when $r<r_0$, $\phi(r)=0$ when $r>r_0$, $0\leq\phi(r)\leq1$, $|\phi'(r)|<2$, $|\phi''(r)|<4$ when $r_0\leq r\leq r_0+2$. We can choose $r_0$ large enough such that all triple junctions of $\Gamma$ are contained in $B_{r_0}$. Let $\bar{f}=\tilde{f}\phi=\tilde{f}-\hat{f}$, where $\hat{f}=\tilde{f}(1-\phi)$.
\begin{equation}
\begin{split}
    [(\bar{f}&,y,h),(\bar{f},y,h)]=[\bar{f},\bar{f}]+\left[\left[\bar{f}\langle y,N\rangle-2\bar{f}hk-h^2k^2-\frac{\langle y,N\rangle^2}{2} \right]\right]\\
    =&\left[\left[-\tilde{f}L\tilde{f}+\tilde{f}\langle y,N\rangle-2\tilde{f}hk-h^2k^2-\frac{\langle y,N\rangle^2}{2} \right]\right]\\
    &-\frac{1}{\sqrt{4\pi}}\sum_{i,j}\tilde{f}_i^j\left\langle\nabla\left(\tilde{f}e^{-\frac{|x|^2}{4}}\right)_i^j,\hat{T}_i^j\right\rangle+\left[\left[\hat{f}L\tilde{f}+\tilde{f}L\hat{f}-\hat{f}L\hat{f}-\hat{f}\langle y,N\rangle+2\hat{f}hk\right]\right].\\
\end{split}
\end{equation}
Note that from the property of $\tilde{f}$, $[[\tilde{f}k]]=[[\tilde{f}\langle y,N\rangle]]=0$, $[\tilde{f},\tilde{f}]<0$. Also $k$ is supported in $B_{r_0}$, $\hat{f}k=0$.
\begin{equation}
\begin{split}
    [(\bar{f},y,h)&,(\bar{f},y,h)]=[\tilde{f},\tilde{f}]+\left[\left[-h^2k^2-\frac{\langle y,N\rangle^2}{2} +\hat{f}L\tilde{f}+\tilde{f}L\hat{f}-\hat{f}L\hat{f}-\hat{f}\langle y,N\rangle\right]\right].\\
    =&[\tilde{f},\tilde{f}]-\left[\left[h^2k^2\right]\right]-\frac{1}{2}\left[\left[\left(\hat{f}-\langle y,N\rangle\right)^2\right]\right]+\left[\left[  \frac{1}{2}\hat{f}^2+\hat{f}L\tilde{f}+\tilde{f}L\hat{f}-\hat{f}L\hat{f}\right]\right].
\end{split}
\end{equation}
We need to estimate the last term. Note that since $\tilde{f}$ is constant on the rays, we have $L\tilde{f}=\frac{\tilde{f}}{2}$ outside $B_{r_0}$.
\begin{equation}
\begin{split}
    |L\hat{f}|&=\left|\tilde{f}\left(\Delta\phi-\left\langle\frac{x}{2},\nabla\phi\right\rangle+\frac{1}{2}\phi\right)\right|\leq\left|\tilde{f}\left(4+\frac{|x|}{2}\cdot 2+\frac{1}{2}\right)\right|\leq\left|\left(|x|+\frac{9}{2}\right)\tilde{f}\right|.
\end{split}    
\end{equation}
Therefore,
\begin{equation}
\begin{split}
    &\left|\sqrt{4\pi}\left[\left[  \frac{1}{2}\hat{f}^2+\hat{f}L\tilde{f}+\tilde{f}L\hat{f}-\hat{f}L\hat{f}\right]\right]\right|=\left|\int_\Gamma \left(\frac{1}{2}\hat{f}^2+\hat{f}L\tilde{f}+\tilde{f}L\hat{f}-\hat{f}L\hat{f}\right)e^{-\frac{|x|^2}{4}}d\sigma\right|\\
    &\leq \int_{\Gamma-B_{r_0}} \left(\frac{1}{2}\tilde{f}^2+\frac{1}{2}\tilde{f}^2+\left(|x|+\frac{9}{2}\right)\tilde{f}^2+\left(|x|+\frac{9}{2}\right)\tilde{f}^2\right)e^{-\frac{|x|^2}{4}}d\sigma\\
    &\leq \int_{\Gamma-B_{r_0}} \left(2|x|+10\right)\tilde{f}^2e^{-\frac{|x|^2}{4}}d\sigma.
\end{split}
\end{equation}
Since $\int_\Gamma (2|x|+10)\tilde{f}^2e^{-\frac{|x|^2}{4}}d\sigma$ is finite. We have $\int_{\Gamma-B_{r_0}} (2|x|+10)\tilde{f}^2e^{-\frac{|x|^2}{4}}d\sigma$ goes to 0 as $r_0$ goes to infinity. Choose $r_0$ large enough such that $\int_{\Gamma-B_{r_0}} (2|x|+10)\tilde{f}^2e^{-\frac{|x|^2}{4}}d\sigma<\frac{1}{2}|[\tilde{f},\tilde{f}]|$. The function $\bar{f}$ is compactly supported and for all $y$, $h$, we have
\begin{equation}
\begin{split}
    [(\bar{f},y,h)&,(\bar{f},y,h)]=-\left[\left[h^2k^2\right]\right]-\frac{1}{2}\left[\left[\left(\hat{f}-\langle y,N\rangle\right)^2\right]\right]\\
    &+[\tilde{f},\tilde{f}]+\left[\left[  \frac{1}{2}\hat{f}^2+\hat{f}L\tilde{f}+\tilde{f}L\hat{f}-\hat{f}L\hat{f}\right]\right]<0.
\end{split}
\end{equation}
Therefore, $\Gamma$ is $F$-unstable.
\end{proof}

Now, we use the following theorem in Colding and Minicozzi's work \cite{CM}.
\begin{thm*}{(\cite{CM})}
Suppose that $\Sigma\subset\mathbb{R}^{n+1}$ is a smooth complete self-shrinker with $\partial\Sigma=\emptyset$, with polynomial growth, and $\Sigma$ does not split off a line isometrically. If $\Sigma$ is $F$-unstable, then there is a compactly supported variation $\Sigma_s$ with $\Sigma_0=\Sigma$ so that $\lambda(\Sigma_s)<\lambda(\Sigma)$ for all $s\neq 0$.
\end{thm*}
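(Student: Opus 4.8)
The plan is to show that the compactly supported variation produced directly by $F$-instability already lowers the entropy to second order, with the first-order term vanishing because $\Sigma$ is a shrinker. First I would unwind the definition of $F$-instability: since $\Sigma$ is not $F$-stable, there is a compactly supported normal variation, encoded by a function $f$, such that for \emph{every} choice of center/scale direction $(y,h)$ the second variation $[(f,y,h),(f,y,h)]$ is strictly negative. Let $\Sigma_s$ be the normal variation generated by $f$; it is compactly supported, and after normalizing so that $\lambda(\Sigma)=F_{0,1}(\Sigma)$ the goal becomes $H(s):=\lambda(\Sigma_s)<\lambda(\Sigma)=H(0)$ for all small $s\neq0$.

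The central structural input I would establish is that the map $(x_0,t_0)\mapsto F_{x_0,t_0}(\Sigma)$ has a \emph{strict, nondegenerate} maximum at $(0,1)$. Its gradient vanishes there because the coefficients of $h$ and $y$ in the first variation are $[[|x|^2-2]]$ and $[[x]]$, both zero by Lemma \ref{lem:bracket}. Its Hessian is exactly the pure $(y,h)$ part of the bilinear form; using Lemma \ref{lem:bracket} to replace $\frac14[[\langle x,y\rangle^2]]$ by $\frac12[[\langle y,T\rangle^2]]$ gives $[[-\frac{|y|^2}{2}+\frac{\langle x,y\rangle^2}{4}]]=-\frac12[[\langle y,N\rangle^2]]$, so the Hessian is $[[-h^2k^2-\frac12\langle y,N\rangle^2]]$. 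This form is negative definite precisely when $k\not\equiv0$ and no nonzero constant vector is everywhere tangent, that is, exactly when $\Sigma$ does not split off a line. This nondegeneracy is what lets me treat the otherwise non-smooth supremum defining the entropy as a genuine maximum near $(0,1)$.

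With nondegeneracy in hand I would apply the implicit function theorem to $\partial_{(x_0,t_0)}F_{x_0,t_0}(\Sigma_s)=0$: for small $s$ there is a smooth family of maximizers $(x_0(s),t_0(s))$ with $(x_0(0),t_0(0))=(0,1)$, and $H(s)=F_{x_0(s),t_0(s)}(\Sigma_s)$. By the envelope theorem, $H'(0)$ equals the first variation of $F$ at $(0,1)$ in the direction $f$, which is zero by the shrinker equation together with the vanishing of the junction term for a regular shrinker. Differentiating once more, and using that the optimal infinitesimal recentering $(x_0'(0),t_0'(0))$ is exactly the Schur-complement direction maximizing the concave quadratic $(y,h)\mapsto[(f,y,h),(f,y,h)]$, I obtain $H''(0)=\max_{(y,h)}[(f,y,h),(f,y,h)]$. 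By $F$-instability this maximum is strictly negative, so $H(s)=H(0)+\frac12H''(0)s^2+o(s^2)<H(0)$ for small $s\neq0$, which is the assertion.

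The hard part will not be this local computation but the global control of the supremum: a priori the entropy of $\Sigma_s$ could be approached by centers running off to infinity, with $|x_0|\to\infty$, $t_0\to0^+$, or $t_0\to\infty$, in which case the maximizer need not remain near $(0,1)$ and the implicit function theorem is inapplicable. Ruling this out is where completeness and polynomial growth are essential: they make $(x_0,t_0)\mapsto F_{x_0,t_0}(\Sigma)$ proper enough that $(0,1)$ is the unique \emph{global} maximum, and that this strict-maximum property persists under the $C^2$-small, compactly supported perturbation $\Sigma_s$. I would therefore first prove a uniform localization statement — that for $s$ small every near-maximizing center lies in a fixed neighborhood of $(0,1)$ — and only then run the envelope argument above on that neighborhood.
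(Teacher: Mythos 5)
The paper does not prove this statement at all: it is quoted verbatim from Colding--Minicozzi \cite{CM} and used as a black box to pass from $F$-instability to entropy instability, so there is no internal proof to compare against. Your proposal is, in outline, a faithful reconstruction of the actual argument in \cite{CM}: the Schur-complement identity $H''(0)=\max_{(y,h)}[(f,y,h),(f,y,h)]$ is correct (with $A$ the negative-definite $(y,h)$-Hessian and $b$ the mixed term, both the constrained maximizer and the implicit-function-theorem recentering give $\partial_s^2F-b^{T}A^{-1}b$), and your identification of the Hessian as $[[-h^2k^2-\tfrac12\langle y,N\rangle^2]]$, degenerate exactly when $k\equiv0$ or some constant vector is everywhere tangent, i.e.\ exactly when $\Sigma$ splits off a line, is the right use of Lemma \ref{lem:bracket}.

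That said, the two ingredients you defer to the last paragraph are not routine properness remarks; they are the bulk of the proof in \cite{CM} and are genuinely missing here. First, $\lambda(\Sigma)=F_{0,1}(\Sigma)$ is a theorem, not a normalization: it follows from Huisken's monotonicity formula applied to the self-similar flow $\Sigma_t=\sqrt{-t}\,\Sigma$ together with the scaling and translation identities for $F$, and polynomial growth is needed even to know $\lambda(\Sigma)<\infty$. Second, the uniform localization statement --- that for all sufficiently small $s$ every $(x_0,t_0)$ with $F_{x_0,t_0}(\Sigma_s)$ close to $\lambda(\Sigma)$ lies in a fixed compact neighborhood of $(0,1)$ --- requires ruling out $|x_0|\to\infty$, $t_0\to0^{+}$, and $t_0\to\infty$ separately, and the $t_0\to0^{+}$ regime in particular forces a density/Gaussian-area analysis at points of $\Sigma$; asserting that completeness and polynomial growth make $F$ ``proper enough'' names the difficulty without resolving it. A further caveat specific to this paper: the quoted theorem is stated for smooth self-shrinkers, whereas the objects here are networks with triple junctions, so even granting \cite{CM} one must check that the localization and attainment lemmas survive in the singular setting --- your sketch, which works directly from the network second variation formula \eqref{eq:2nd_var}, is actually better positioned to do this than a bare citation, but the verification still has to be carried out.
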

We can obtain the following corollary.
\begin{cor}
The 4-ray star, 5-ray star, fish, and rocket are entropy unstable regular shrinkers.
\end{cor}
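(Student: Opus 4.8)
The plan is to obtain the corollary as an immediate consequence of the preceding theorem together with the quoted result of Colding and Minicozzi \cite{CM}. That theorem converts $F$-instability into entropy-instability: it produces a compactly supported variation $\Gamma_s$ with $\lambda(\Gamma_s)<\lambda(\Gamma)$ for all $s\ne 0$, which is exactly the statement that $\Gamma$ fails to be a local minimum of $\lambda$, i.e. that $\Gamma$ is entropy-unstable. Since the preceding theorem established that the 4-ray star, 5-ray star, fish, and rocket are all $F$-unstable, the only remaining work is to verify that each of them meets the hypotheses of the Colding--Minicozzi theorem and that its argument survives the passage from smooth hypersurfaces to networks.

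First I would check the hypotheses. Each of the four shrinkers is a complete regular shrinker with no endpoints, and although its rays are noncompact, the Gaussian weight $e^{-|x|^2/4}$ makes $F_{x_0,t_0}$ and all the weighted integrals appearing in the second variation converge; this is the network analogue of the polynomial-growth condition. Most importantly, none of these shrinkers splits off a line: a line-splitting shrinker is invariant under translation in some direction, whereas each of our networks has triple junctions and bounded, non-translation-invariant structure. Consequently the supremum defining $\lambda(\Gamma)=\sup_{x_0,t_0}F_{x_0,t_0}(\Gamma)$ is attained, and is attained only at the canonical center $(x_0,t_0)=(0,1)$; this is the network version of the fact that the Gaussian density of a shrinker is maximized at its own center.

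Next I would transfer the variational argument. The substance of the Colding--Minicozzi proof is a second-order expansion of $\lambda$ near the shrinker: starting from the compactly supported direction $\bar f$ furnished by $F$-instability, one generates a variation $\Gamma_s$ and estimates $\lambda(\Gamma_s)=\sup_{x_0,t_0}F_{x_0,t_0}(\Gamma_s)$ for small $s$. By continuity the maximizing center stays near $(0,1)$, and at each such center the relevant second variation is governed by the bilinear form $[(\bar f,y,h),(\bar f,y,h)]$ computed in Section \ref{sec:2nd_var}; shifting the center by $(y,h)$ is precisely compensated by the translation and time eigenfunctions $\langle V,N\rangle$ and $k$. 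The content of $F$-instability, as established in the previous theorem, is exactly that this form is negative for our $\bar f$ uniformly over all compensating $(y,h)$, so that $\lambda(\Gamma_s)<\lambda(\Gamma)$ for $s\ne 0$.

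\textbf{The main obstacle.} The delicate point is that the Colding--Minicozzi theorem is stated for smooth hypersurfaces, while our networks carry triple junctions and are therefore non-smooth at those points. One must confirm that the junction boundary terms---already present in our second variation formula and already absorbed into the definition of $F$-instability---do not disrupt the second-order analysis, and that the two global ingredients the argument needs, namely attainment of the supremum over centers and a uniform second-order bound near the maximizing center, persist in the weighted noncompact setting with rays. Both of these are exactly what the explicit, compactly supported $\bar f$ of the preceding section was designed to control, so once the argument is restricted to networks it applies without essential change.
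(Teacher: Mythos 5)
Your proposal follows exactly the paper's route: the corollary is deduced immediately from the preceding $F$-instability theorem via the quoted Colding--Minicozzi result, and the paper itself offers no further argument beyond that citation. Your additional discussion of the hypotheses (weighted integrability in place of polynomial growth, not splitting off a line, and the junction boundary terms) is more careful than the paper, which silently applies the smooth-hypersurface theorem to networks, but the underlying approach is the same.
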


\section{Appendix: Some important properties for regular shrinkers}\label{sec:appendix}
In this section, we collect some properties of AL-curves which are used in section \ref{sec:stars} and \ref{sec:fish_rocket}. The reader may look up \cite{CG} and \cite{CL} for more details.

A curve satisfying $k=-\frac{\langle x,n\rangle}{2}$ is called an AL-curve since it is studied by Abresch and Langer in \cite{AL} to obtain properties of self-similarly shrinking solutions. Without loss of generality, we parametrize an AL-curve in the counterclockwise direction. Let $r,\theta$ be the parameter for polar coordinate in $\mathbb{R}^2$, $\phi$ be the direction of the tangent vector $T$ and $\psi$ be the signed angle from $\gamma$ to $T$. We have $\frac{dr}{ds}=\cos\psi$, $\frac{d\theta}{ds}=\frac{\sin\psi}{r}$, and $\frac{d\phi}{ds}=k=\frac{R\sin\psi}{2}$. Therefore, 
\begin{equation}
    \frac{d\psi}{ds}=\frac{d\phi}{ds}-\frac{d\theta}{ds}=\sin\psi(\frac{r}{2}-\frac{1}{r}).
\end{equation}
We can deduce
\begin{equation}
    \cot\psi d\psi=(\frac{r}{2}-\frac{1}{r})dr.
\end{equation}
After integration, we have $c\sin\psi=K(r)$, where $K(r)=\frac{e^{\frac{r^2}{4}}}{r}$. Here, $c$ is called the energy of the AL-curve. The curvature $k$ satisfies
\begin{equation}
    k=-\frac{\langle x,n\rangle}{2}=\frac{1}{2}r\sin\psi=\frac{1}{2c}\exp\left(\frac{r^2}{4}\right).
\end{equation}
On an AL-curve, $ke^{-\frac{|x|^2}{4}}=\frac{1}{2c}$ is a constant, as desired.

\begin{rmk}
The regular shrinkers in this work satisfies $k=-\frac{\langle x,N\rangle}{2}$. In \cite{CG} and \cite{CL}, the regular shrinker satisfies $k=-\langle x,N\rangle$. Therefore, the regular shinker is $\sqrt{2}$ times larger and the energy $c$ is $\sqrt{\frac{e}{2}}$ times larger than the corresponding quantities in \cite{CG} and \cite{CL}.
\end{rmk}

An AL-curve can connect to a ray at a triple junction. In this case, $\psi=\frac{\pi}{3}$ or $\frac{2\pi}{3}$. Let $r_{\mathbf{in}}<r_{\mathbf{out}}$ be the two solutions of $K(r)=c\sin\phi=\frac{\sqrt{3}c}{2}$. This are the possible $r$-value where an AL-curve connects to a ray at a triple junction. Now, from the work of Chen and Guo, we have the following numerical results.
\begin{center}
    \begin{tabular}{|c|l|l|l|l|l|l|}
    \hline
    Name & $c$ & $r_{\min}$ & $r_{\mathbf{in}}$ & $r_{\mathbf{out}}$ & $r_{\max}$ &  $h_1$\\
    \hline  
    Brakke spoon & 1.4021 & 0.8568 & 1.1390 & 1.7086 & 2.0596  & 1.9082\\
    \hline  
    Lens & 1.3938 & 0.8649 & 1.1590 & 1.6858 & 2.0487  & 1.9497 \\
    \hline  
    Fish & 3.3597 & 0.3046 & 0.3546 & 2.9271 & 3.0511  & 1.1040 \\
    \hline  
    3-ray star & 1.3716 & 0.8878 & 1.2251 & 1.6121 & 2.0180  & $\frac{2\pi}{3}$ \\
    \hline  
    Rocket & 1.9338 & 0.5591 & 0.6674 & 2.3358 & 2.5155  & 1.2717 \\
    \hline  
    4-ray star & 1.5281 & 0.7544 & 0.9443 & 1.9443 & 2.2038  & $\frac{2\pi}{4}$ \\
    \hline
    5-ray star & 1.9804 & 0.5436 & 0.6474 & 2.3675 & 2.5429  & $\frac{2\pi}{5}$ \\
    \hline  
    \end{tabular}
\end{center}

\end{document}